\numberwithin{equation}{section}
\newtheorem{theorem}{Theorem}[section]
\newtheorem{definition}[theorem]{Definition}
\newtheorem{lemma}[theorem]{Lemma}
\newtheorem{remark}[theorem]{Remark}
\newtheorem{proposition}[theorem]{Proposition}
\newtheorem{coro}[theorem]{Corollary}
\begin{document}

\title[On Stochastic Shell Models of Turbulence] 
{On Stochastic Shell Models of Turbulence} 



\author[C. Wijeratne] 
{Chandana Wijeratne} 
\address{
C. Wijeratne, Sri Lanka Institute of Information Technology,
New Kandy Road,
Malabe,
10115,
Sri Lanka}
\email{chandana.w@sliit.lk} 

\subjclass[2010]{Primary: 34G20, Secondary: 34F05}

\keywords{stochastic shell models, structure functions, asymptotic exponents}

\begin{abstract}

\noindent We prove existence of weak and strong solutions and uniqueness for a viscous dyadic model driven by additive white noise in time using a path-wise approach. Existence of invariant measures also established and a simple balance relation among the mean rates of energy injection, dissipation and flux is derived and we investigate the asymptotic exponents $\zeta_{p}$ of the $p$-order structure functions.

\end{abstract}

\maketitle

\section [Introduction] {Introduction}

In the attempt of describing and understanding natural phenomena, mathematicians and physicists develop various models. Among these models, the Navier-Stokes equations are often used to describe the motion of fluids. Various forms of the Navier-Stokes equations have wide range of applications, such as, modeling weather, ocean currents, vortex, water flow in a pipe, blood flow in the body and air flow around a wing. Turbulence is where a fluid flow exhibits a chaotic behavior with time. It is widely believed that the Navier-Stokes equations are of great use when describing turbulence. The nonlinear term of the Navier-Stokes equations is the main contributor to the turbulence that the equations model and also the term that creates mathematical difficulties and challenges.\\

\noindent Most of the phenomena we encounter in daily life are not deterministic in nature: for instance, stock prices fluctuations are random. When modeling such phenomena, we incorporate a suitable stochastic process that accounts for the randomness, and we arrive at a stochastic ordinary differential equation or stochastic partial differential equation. The stochastic Navier-Stokes equations are of great interest as it is believed that they capture the dynamics of turbulent fluid flows in some regimes. Consequently, these equations are widely in the hope of obtaining stronger mathematical results that are not available under the deterministic case.\\

\noindent Despite the extensive use of the Navier-Stokes equations in practice, many mathematical questions remain open; the uniqueness of the global weak solution in three dimensions together with its regularity are among them. In the study of 3D turbulence, among the quantities of major interest are the asymptotic exponents $\zeta_{p}$ of the $p$-order structure functions. Numerical investigations as well as heuristics based on physical intuition have been extensively developed to support a multifractal structure of $\zeta_{p}$, see \cite{Biferale_2003} and the references therein. There is a general agreement on $\zeta_{p}<\frac{p}{3}$ for $p$ large and $\zeta_{3}=1$. The value of $\zeta_{2}$ is less clear as there is no analytical proof while certain numerical simulations indicate a value larger than $\frac{2}{3}$ opposed to $\zeta_{2}=\frac{2}{3}$ claimed by the  Kolmogorov theory. On the basis of previous works, see \cite{GOY_Flandoli}, we believe that some rigorous information on questions of turbulence theory could be better obtained from stochastic version of the equations of fluid dynamics. The advantage of the stochastic case is the major simplicity of balance laws between mean rates of energy injection, dissipation and flux. These are obtained by the It\^{o} formula and stochastic analysis. Our aim in this paper is to investigate such balance laws in one of the simplest settings related to fluid dynamics, shell models of turbulence.\\

\noindent Shell models are simplified Fourier systems with respect to the Navier-Stokes equations, where the interactions among different modes are preserved only between neighbors. In most of these shell models the interactions among the neighbors are limited to either three or five neighbors. Thus, shell models are drastic simplifications of the Navier-Stokes equations. In particular, the GOY and Sabra models are some of the most interesting and most popular examples of simplified phenomenological models of turbulence. Although departing from reality, they capture some essential statistical properties and features of turbulent flows, like the energy spectrum, the enstrophy cascade and the power law decay of the turbulent flows in some range of wave numbers-the inertial range, see \cite{Biferale_2003}.\\

\noindent We now describe how the content of this paper has been organized. We first introduce the shell models. We start by writing the Navier-Stokes equations in Fourier components; here, we assume that the Navier-Stokes equation is defined for periodic boundary conditions. In Fourier components the nonlinear term is represented by a term that contains infinitely many interactions among the neighboring terms and we arrive at shell models by considering only the local interactions between the neighbors. Thus, shell models are infinite system of coupled nonlinear scalar valued ordinary differential equations that preserve some properties of the original equation. Notice that geometry for the fluid is completely lost since we are dealing with scalar valued velocities instead of vector valued velocities. However, this does not affect the purpose of the current research. We then introduce the GOY model, Sabra model and dyadic model as examples for more general shell models. Here, we need to define the boundary condition for these models. We then introduce the functional setting associated to our models and define the associated operators and prove some of their properties.\\

In the next section, the stochastic version of the model is introduced. This is a viscous shell model driven by an additive white noise in time. We use the abstract form of the model and work in the setting defined earlier. We first prove the existence and uniqueness of strong (in the probabilistic sense) solutions. We present the proof in several steps. We use a pathwise argument and in our proof for existence, we use known results about the linear counterpart of the model, the Ornstein-Uhlenbeck process. Then, using a compactness theorem and following the approach used in \cite{GOY_Flandoli}, we show the existence and the uniqueness of solutions. Further, we give an estimate for the $p$-integrability of the solution via expectation, using a stopping time and applying the It\^{o} formula and the Burkholder-Davis-Gundy inequality. Then, we show that the solution is more regular, that is the solution belongs to $D(A)$ when the initial data is in $V$, a subspace of $H$ where $H$, $V$ and $D(A)$ are certain Hilbert spaces. Furthermore, improving the regularity of the solution, we prove that the solution belongs to $D(A)$ if the initial data is in $H$ following similar techniques used in \cite{Constantin_2006}.\\

\noindent The last section contains the results of our study of the asymptotic exponents $\zeta_{p}$ of $p$-order structure functions under a stochastic framework. Following the techniques introduced in \cite{GOY_Flandoli}, we first set the foundation for the study of these quantities by first establishing the existence of invariant measure for the shell model using the Feller property and applying the Krylov-Bogoliubov Theorem. Then, using the invariant measure, we establish a balance relation among the mean rates of energy injection, dissipation and flux between different scales. The power of stochastic tools became apparent when we use It\^{o} formula and stochastic calculus to get a balance relation in its simplest form. Then, we define the $p$-order structure functions and we work in some intermediate range of $n$s of the components of the solution, under suitable assumptions following the Kolmogorov theory. Based on $p$-order structure functions we define $\zeta_{p}$, the asymptotic exponents of order $p$. Our main interest is for $p=2$. All our results were derived assuming only the first component of the noise term is nonzero while all other noise terms are zero in our stochastic shell model. Based on the balance relation, we prove some results related to K41 theory including a monotonicity property and a boundedness property for mean rate of energy flux. We give sufficient and necessary conditions for $\zeta_{2}<\frac{2}{3}$ and $\zeta_{2}=\frac{2}{3}$. All these results were proved based on an assumption on the ratio related to structure functions and flux asymptotic exponent that is not derived from the balance relation.\\

\noindent All the results were proved for the dyadic model; however most of the proofs will work for the other shell models such as GOY model or Sabra model. An interesting open problem would be to prove that the assumption used can be derived from the balance relation but, unfortunately we are unable to answer that question for now. We can improve the regularity of solution in the Gevrey class (see, for instance \cite{Constantin_2006} for deterministic case), but we have not included such results in this paper.

\section[Shell Models and Functional Setting] {Shell Models and Functional Setting}

In this section, we first introduce the Navier-Stokes equations in Fourier components. Then, we introduce shell models as a phenomenological approximation of Navier-Stokes equations. A functional setting will be defined for these models, a crucial step in studying the well posedness of the stochastic version of these models.

\subsection{Navier-Stokes Equations in Fourier Space}

Navier-Stokes equations (NSEs) are of great importance in describing fluid flow. The NSEs governing the motion of a homogeneous, incompressible, viscous fluid in the three-dimensional torus are given by
\begin{equation}\label{NSE}
\frac{\partial \vec{u}(\vec{x},t)}{\partial t}= \nu\Delta \vec{u}(\vec{x},t) - (\vec{u}(\vec{x},t)\cdot\nabla)\vec{u}(\vec{x},t) -\nabla P(\vec{x},t)
\end{equation}
\noindent and
\begin{equation}\label{div_0}
\text{div } \vec{u}(\vec{x},t) = 0,
\end{equation}
\noindent assuming periodic boundary condition and initial condition, where $\vec{x}=(x_{1},x_{2},x_{3})\in [0,2\pi]^{3}$, $t\in[0,T]$, $\vec{u}(\vec{x},t)=(u_{1}(\vec{x},t),u_{2}(\vec{x},t),u_{3}(\vec{x},t))$ is the velocity field, $P=P(\vec{x},t)$ is the pressure field and $\nu >0$ is the viscosity coefficient. For further reading on NSEs, we refer to \cite{Temam_NSE_Functional_Analysis}, \cite{Temam_NSE_Numerical_Analysis} and \cite{Cannone}. Note that we may write \eqref{NSE} and \eqref{div_0} component-wise as
\begin{equation}\label{NSE_component_wise}\nonumber
\frac{\partial u_{j}}{\partial t} = \nu \sum_{n=1}^{3}\frac{\partial^{2} u_{j}}{\partial x_{n}^{2}}- \sum_{n=1}^{3}u_{n}\frac{\partial u_{j}}{\partial x_{n}} - \frac{\partial P}{\partial x_{j}}, \ \ \ j=1,2,3
\end{equation}
\noindent and
\begin{equation}\label{div_0_component_wise}\nonumber
\sum_{j=1}^{3}\frac{\partial u_{j}}{\partial x_{j}} = 0.
\end{equation}

\noindent Observe that taking divergence of \eqref{NSE} and using the condition \eqref{div_0}, we obtain
\begin{equation}\label{delta_P}\nonumber
\Delta P = -div (\vec{u}(\vec{x},t)\cdot\nabla)\vec{u}(\vec{x},t),
\end{equation}
\noindent or component-wise
\begin{equation}\label{delta_P_component_wise}\nonumber
\sum_{j=1}^{3}\frac{\partial^{2}P}{\partial x_{j}^{2}} = -\sum_{j=1}^{3}\frac{\partial}{\partial x_{j}}\sum_{n=1}^{3}u_{n}\frac{\partial u_{j}}{\partial x_{n}}.
\end{equation}
\noindent In order to write NSEs \eqref{NSE} and \eqref{div_0} in Fourier components, consider the Fourier basis $(e^{i\vec{k}\cdot \vec{x}})_{\vec{k}\in \mathbb{Z}^{3}}$ in $L^{2}([0,2\pi]^{3})$. We have
\begin{equation}\label{u_Fourier}
\vec{u}(\vec{x},t) = \sum_{\vec{k}\in \mathbb{Z}^{3}}\vec{u}_{\vec{k}}(t)e^{i\vec{k}\cdot \vec{x}}, \ \ \ \ \vec{u}_{\vec{k}}(t)\in \mathbb{C}^{3}
\end{equation}
\noindent and
\begin{equation}\label{P_Fourier}
P(\vec{x},t) = \sum_{\vec{k}\in \mathbb{Z}^{3}}P_{\vec{k}}(t)e^{i\vec{k}\cdot \vec{x}}, \ \ \ \ P_{\vec{k}}(t)\in \mathbb{C}.
\end{equation}
\noindent Using \eqref{u_Fourier} and \eqref{P_Fourier}, we write each term in \eqref{NSE} and \eqref{div_0} in Fourier components as
\begin{equation}\label{partial_u_partial_t_Fourier}
\frac{\partial \vec{u}(\vec{x},t)}{\partial t} = \sum_{\vec{k}\in \mathbb{Z}^{3}}\frac{\partial \vec{u}_{\vec{k}}(t)}{\partial t}e^{i\vec{k}\cdot \vec{x}},
\end{equation}
\begin{equation}\label{Delta_u}
\Delta \vec{u}(\vec{x},t)= - \sum_{\vec{k}\in \mathbb{Z}^{3}}|\vec{k}|^{2}\vec{u}_{\vec{k}}(t)e^{i\vec{k}\cdot \vec{x}},
\end{equation}
\begin{equation}\label{nabla_P}
(\nabla P)_{j} = i \sum_{\vec{k}\in \mathbb{Z}^{3}} P_{\vec{k}}(t)\vec{k}_{j}e^{i\vec{k}\cdot \vec{x}},\ \ \ \ j=1,2,3,
\end{equation}
\noindent and
\begin{align}\label{non_linear_term_Fourier}\nonumber
(\vec{u}(\vec{x},t)\cdot\nabla)\vec{u}(\vec{x},t) &= i\sum_{n=1}^{3}\sum_{\vec{k}\in \mathbb{Z}^{3}}\vec{u}_{\vec{k}_{n}}(t)e^{i\vec{k}\cdot \vec{x}}\sum_{\vec{k}\in \mathbb{Z}^{3}}\vec{u}_{\vec{k}_{j}}(t)\vec{k}_{n}e^{i\vec{k}\cdot \vec{x}}\nonumber
\\ & = i\sum_{\vec{k}\in \mathbb{Z}^{3}}\sum_{\vec{l}+\vec{h}=\vec{k}}(\vec{l}\cdot \vec{u}_{\vec{h}}(t))\vec{u}_{\vec{l}}(t)\ e^{i\vec{k}\cdot \vec{x}}.\nonumber
\end{align}
\noindent Since $div\ \vec{u}(\vec{x},t)=0$ we have $\vec{k}\cdot \vec{u}_{\vec{k}}(t)=0$ for all $\vec{k}\in \mathbb{Z}^{3}$. Thus, we have
\begin{equation}\label{non_linear_term_Fourier_2}
(\vec{u}(\vec{x},t)\cdot\nabla)\vec{u}(\vec{x},t) = i\sum_{\vec{k}\in \mathbb{Z}^{3}}\sum_{\vec{l}+\vec{h}=\vec{k}}(\vec{k}\cdot \vec{u}_{\vec{h}}(t))\vec{u}_{\vec{l}}(t)\ e^{i\vec{k}\cdot \vec{x}}.
\end{equation}
\noindent Note that
\begin{equation}\nonumber
\Delta P = - \sum_{\vec{k}\in \mathbb{Z}^{3}} |\vec{k}|^{2}P_{\vec{k}}(t)e^{i\vec{k}\cdot \vec{x}},
\end{equation}
\noindent and using \eqref{delta_P} and \eqref{non_linear_term_Fourier_2} we get
\begin{equation}\label{P_substitution}
P_{\vec{k}}(t) = -\frac{\sum_{\vec{l}+\vec{h}=\vec{k}}(\vec{k}\cdot \vec{u}_{\vec{h}}(t))(\vec{k}\cdot \vec{u}_{\vec{l}}(t))}{|\vec{k}|^{2}}.
\end{equation}
\noindent Now using \eqref{partial_u_partial_t_Fourier}, \eqref{Delta_u}, \eqref{nabla_P}, \eqref{non_linear_term_Fourier_2}, \eqref{P_substitution} together with \eqref{NSE} we obtain the following infinite dimensional coupled system of differential equations
\begin{equation}\label{NSE_Fourier}
\frac{\partial \vec{u}_{\vec{k}}(t)}{\partial t} = -\nu |\vec{k}|^{2}\vec{u}_{\vec{k}}(t)-i\sum_{\vec{l}+\vec{h}=\vec{k}}(\vec{k}\cdot \vec{u}_{\vec{h}}(t))\bigg[\vec{u}_{\vec{l}}(t)-\frac{(\vec{k}\cdot \vec{u}_{\vec{l}}(t))}{|\vec{k}|^{2}}\vec{k}\bigg]
\end{equation}
\noindent such that
\begin{equation}\nonumber
\vec{k}\cdot \vec{u}_{\vec{k}}(t)=0 \text{ for all } \vec{k}\in \mathbb{Z}^{3}.
\end{equation}

\subsection{Shell Models of Turbulence}


Shell Models are simplified phenomenological models that capture some properties of turbulent fluid flows. They are simplified versions of \eqref{NSE_Fourier}. Shell models are described by an infinite dimensional system of coupled ordinary differential equations that contain nonlinearities similar to \eqref{NSE_Fourier}. The main difference with \eqref{NSE_Fourier} is that shell models contain only local interactions among the shell variables. In particular, most of the well-known shell models contain interactions among three or five nearest neighbors. Thus, shell models are drastically simplified models compared to NSEs and this very reduced number of interactions and parametrization of the fluctuation of a turbulent field in each shell by small number of variables of shell models give us computational advantages. \\

\noindent It is natural to consider NSEs \eqref{NSE} with the presence of external forces, see for instance, \cite{Temam_NSE_Functional_Analysis}. NSEs with forcing can be written in Fourier components and for further reading see for instance \cite{Marco_Romito}. It is believed (see for instance \cite{SNSE_Flandoli}) that questions on turbulence theory could be answered more rigorously by taking into account the randomness as well. NSEs have been studied under different kinds of stochastic forcing, see for instance \cite{SNSE_Rozovskii}. In a similar line of thoughts, shell models have been also studied when driven by an external force which is deterministic or stochastic. Among the well known shell models are the Gledzer-Okhitani-Yamada (GOY) model, introduced in \cite{GOY_89}, and the ``Sabra'' model, introduced in \cite{Sabra_98}. For further studies on GOY model we refer \cite{GOY_Flandoli} and for Sabra model we refer \cite{Constantin_2006} and \cite{Constantin_2007}. For further reading on some stochastic shell models, we refer to very recent papers \cite{Annie_Millet} and \cite{Attractors_Flandoli}. The Dyadic model \eqref{shell_model_differential_form_componentwise} has been studied previously in \cite{Energy_Flandoli}, \cite{Cheskidov_attractor}, \cite{Cheskidov_conjecture}, \cite{Cheskidov_blowup} \cite{Cheskidov_vanishing} and \cite{Uniqueness_Flandoli}.

\subsection{GOY, Sabra and Dyadic Models}

\noindent We will be dealing with the following infinite system of nonlinear ODEs,
\begin{equation}\label{shell_model_general}
du_{n} + \nu k_{n}^{2}u_{n}dt = (B(u,u))_{n}dt + \sigma_{n}d\beta_{n} \text{ for }n=1,2,\ldots\ .
\end{equation}
\noindent For a given $k_{0}>0$, $k_{n}=k_{0}\lambda^{n}$ denotes the $n^{text{th}}$ one dimensional ``wave number'' where $n$ represents the ``shell index'' and $\lambda>1$ being the ``shell spacing parameter''. The sequence of functions $(u_{n}(t))_{n\geq0}$ on $[0,T]$, describes the average value of the Navier-Stokes Fourier components of velocities in a ``shell''. The interval $(k_{n},k_{n+1})$ is known as a shell. The constant $\nu>0$ is the viscosity.  Additive noise is denoted by $(\beta_{n})_{n\geq1}$, a sequence of real valued independent Brownian motions defined on a probability space $(\Omega, \mathcal{F},\mathcal{P})$ with $(\sigma_{n})_{n\geq1}$.\\

\noindent The nonlinear term $(B(u,u))_{n}$ has different forms depending on the model. For the GOY model we have that
\begin{equation}\nonumber
(B(u,u))_{n} = ik_{n}\bigg(\frac{1}{4}\overline{u}_{n-1}\overline{u}_{n+1}-\overline{u}_{n+1}\overline{u}_{n+2} +\frac{1}{8}\overline{u}_{n-1}\overline{u}_{n-2}\bigg) \text{ for }n=1,2,\ldots
\end{equation}
\noindent where $u_{n}(t)$ are complex valued functions over $[0,T]$ and $\overline{u}_{n}$ denotes the complex conjugate of $u_{n}$. In this case \eqref{shell_model_general} has the boundary conditions $u_{-1}(t)=u_{0}(t)=0$.\\

\noindent For the Sabra shell model, the nonlinear term has the form
\begin{equation}\nonumber
(B(u,u))_{n} = i(ak_{n+1}u_{n+2}\overline{u}_{n+1}+bk_{n}u_{n+1}\overline{u}_{n-1} -ck_{n-1}u_{n-1}u_{n-2}) \text{ for }n=1,2,\ldots
\end{equation}
\noindent together with the boundary conditions $u_{-1}(t)=u_{0}(t)=0$. The parameters $a,b$ and $c$ are real, $u_{n}(t)$ are complex valued functions over $[0,T]$ and $\overline{u}_{n}$ denotes the complex conjugate of $u_{n}$. Defining the energy as
$$E=\sum_{n=1}^{\infty}|u_{n}|^{2},$$
and requiring conservation of energy for the inviscid and unforced case, we obtain the following relation among the parameters
$$a+b+c=0,$$
known as the energy conservation condition.\\

\noindent The following nonlinear term gives rise to the dyadic model
\begin{equation}\nonumber
(B(u,u))_{n} = (k_{n-1}u_{n-1}^{2}-k_{n}u_{n}u_{n+1}) \text{ for }n=1,2,\ldots
\end{equation}
\noindent with the boundary condition $u_{0} = 0$ and $u_{n}(t)$ are real valued functions over $[0,T]$. The ``shell spacing parameter'' $\lambda$ is selected to be 2, hence the name ``dyadic model''.

\subsection{Functional Setting}

The functional setting will be given for the dyadic model
\begin{equation}\label{shell_model_differential_form_componentwise}
du_{n} + \nu k_{n}^{2}u_{n}dt = (k_{n-1}u_{n-1}^{2}-k_{n}u_{n}u_{n+1})dt + \sigma_{n}d\beta_{n} \text{ for }n=1,2,\ldots\
\end{equation}
\noindent with the boundary condition $u_{0} = 0$ where $u_{n}(t)$ are real valued random variables. However, most of the properties will be true for all other shell models defined previously.\\

\noindent We will introduce the following spaces over $\mathbb{R}$ and then define the operators on them so that we can write \eqref{shell_model_differential_form_componentwise} in an abstract form.\\
\noindent Let $u=(u_{1},u_{2},u_{3},\ldots)\in\mathbb{R}^{\infty}$. Let $H$ be the Hilbert space defined by
\begin{equation}\nonumber
H = \bigg\{u:\sum_{n=1}^{\infty}u_{n}^{2}<\infty\bigg\}.
\end{equation}
\noindent We have the inner product defined on $H$ by
\begin{equation}\nonumber
\langle u,v \rangle_{H}=\sum_{n=1}^{\infty}u_{n}v_{n} \text{ for all } u,v \in H
\end{equation}
\noindent and the norm is given by
\begin{equation}\nonumber
|u|_{H}^{2}=\sum_{n=1}^{\infty}u_{n}^{2} \text{ for all } u\in H.
\end{equation}
\noindent We define the operator $A:D(A)\subset H \rightarrow H$ as
\begin{equation}\label{A}
Au = (k_{1}^{2}u_{1}, k_{2}^{2}u_{2}, k_{3}^{2}u_{3}, \ldots ) \text{ for all } u \in D(A).
\end{equation}
\noindent The linear operator $A$ is self-adjoint and strictly positive definite operator. The domain of $A$, denoted by $D(A)$, is the subspace defined as
\begin{equation}\nonumber
D(A) = \bigg\{u:\sum_{n=1}^{\infty}k_{n}^{4}u_{n}^{2}<\infty\bigg\}.
\end{equation}
\noindent We have $D(A)$ dense in $H$, and $D(A)$ is a Hilbert space with the graph norm $|Au|_{H}$ defined by
\begin{equation}\nonumber
|u|_{D(A)}^{2}=\sum_{n=1}^{\infty}k_{n}^{4}u_{n}^{2} \text{ for all } u\in D(A).
\end{equation}
\noindent Since $A$ is positive definite operator we can define the powers of $A$, denoted by $A^{s}$ for all $s\in \mathbb{R}$. We define
\begin{equation}\nonumber
A^{s}u = (k_{1}^{2s}u_{1}, k_{2}^{2s}u_{2}, \ldots) \text{ for all } u\in D(A^{s})
\end{equation}
\noindent where $D(A^{s})$ is defined as
\begin{equation}\nonumber
V_{s}:=D(A^{s/2})= \bigg\{u :\sum_{n=1}^{\infty}k_{n}^{2s}u_{n}^{2}<\infty\bigg\}.
\end{equation}
\noindent The spaces $V_{s}$ are Hilbert spaces with the inner product defined as
\begin{equation}\nonumber
\langle u,v \rangle_{V_{s}}=\langle A^{s/2}u, A^{s/2}v \rangle_{H} \text{ for all } u,v \in V_{s},
\end{equation}
\noindent and we have the norm on $V_{s}$ defined as
\begin{equation}\nonumber
|u|_{V_{s}}^{2}=\sum_{n=1}^{\infty}k_{n}^{2s}u_{n}^{2} \text{ for all } u\in V_{s}.
\end{equation}
\noindent For all $s\in\mathbb{R}$, we have $V_{s}^{'}$, the dual space of $V_{s}$, is $V_{-s}$. We define the dual pairing between $V_{s}$ and $V_{-s}$ as
\begin{equation}\nonumber
\langle u,v \rangle_{V_{-s}\times V_{s}} = \langle A^{-s/2}u, A^{s/2}u \rangle_{H} = \sum_{n=1}^{\infty}u_{n}v_{n} \text{ for all } u\in V_{-s} \text{ and } u\in V_{s}.
\end{equation}
\noindent Note that we are interested in certain special cases of $V_{s}$. When $s=0$ we recover $H$ and when $s=2$ we recover $D(A)$. For $s=1$ we write $V$ and for $s=-1$ we write $V'$. We have the following inner products and norms defined on the Hilbert spaces $V$ and $V'$ respectively. On the Hilbert space $V$ we have
\begin{equation}\nonumber
|u|_{V}^{2}=\sum_{n=1}^{\infty}k_{n}^{2}u_{n}^{2},
\end{equation}
\noindent and
\begin{equation}\nonumber
\langle u,v \rangle_{V}=\sum_{n=1}^{\infty}u_{n}v_{n} \text{ for all } u,v \in V
\end{equation}
\noindent and on the Hilbert space $V'$ we have
\begin{equation}\nonumber
|u|_{V'}=\sum_{n=1}^{\infty}k_{n}^{-2}u_{n}^{2},
\end{equation}
\noindent and
\begin{equation}\nonumber
\langle u,v \rangle_{V'}=\sum_{n=1}^{\infty}u_{n}v_{n} \text{ for all } u,v \in V'.
\end{equation}
\noindent Also, note that we can extend the linear operator $A$ to $A(\cdot):V\rightarrow V'$ since $D(A)$ is dense in $V$.
Further, we observe that the following embedding relation (with compactness) holds;
\begin{equation}\nonumber
V_{s}\subset V_{0}=H\subset V_{-s} \text{ for all } s>0.
\end{equation}
\noindent Finally, for all $u\in H$ and $v\in V$, we define the bilinear operator $B(\cdot,\cdot):H\times V \rightarrow H$ as
\begin{equation}\label{B}
(B(u,v))_{n} = k_{n-1}u_{n-1}v_{n-1}-k_{n}u_{n}v_{n+1}.
\end{equation}
\noindent Remark that we can define $B$ as $B(\cdot,\cdot):V\times H \rightarrow H$ as well. Further, as we show in the next section, we can extend $B$ to $B(\cdot,\cdot):H\times H \rightarrow V'$.

\subsection{Properties of the Bilinear Operator}

\noindent We have the following properties for the bilinear operator $B$.
\begin{proposition}
Let $u\in V$ and $v\in H$. Then
\begin{equation}\label{B_zero_property}
\langle B(u,v),v \rangle_{H} = 0.
\end{equation}
\end{proposition}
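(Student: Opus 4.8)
The plan is to compute $\langle B(u,v),v\rangle_H$ directly from the definition \eqref{B} and exhibit a telescoping cancellation after a shift of index. Expanding, I would write
\begin{equation}\nonumber
\langle B(u,v),v\rangle_H = \sum_{n=1}^\infty \big(k_{n-1}u_{n-1}v_{n-1}-k_n u_n v_{n+1}\big)v_n = \sum_{n=1}^\infty k_{n-1}u_{n-1}v_{n-1}v_n - \sum_{n=1}^\infty k_n u_n v_n v_{n+1}.
\end{equation}
In the first sum the $n=1$ term is $k_0 u_0 v_0 v_1$, which vanishes by the boundary condition $u_0 = 0$; reindexing the remaining terms by $m=n-1$ then yields $\sum_{m=1}^\infty k_m u_m v_m v_{m+1}$, which coincides term for term with the second sum. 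The difference is therefore zero.

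The only genuine point to address is that splitting the series and reindexing are legitimate, and for this I would first establish absolute convergence. Since $v\in H$ each component satisfies $|v_n|\le |v|_H$, and since $u\in V$ we have $\sum_n k_n^2 u_n^2 = |u|_V^2<\infty$. Applying the Cauchy--Schwarz inequality and using $u_0=0$,
\begin{equation}\nonumber
\sum_{n=1}^\infty k_{n-1}|u_{n-1}|\,|v_{n-1}|\,|v_n| \le \bigg(\sum_{n=1}^\infty k_{n-1}^2 u_{n-1}^2 v_{n-1}^2\bigg)^{1/2}\bigg(\sum_{n=1}^\infty v_n^2\bigg)^{1/2}\le |v|_H\,|u|_V\,|v|_H,
\end{equation}
and the analogous bound gives $\sum_n k_n|u_n|\,|v_n|\,|v_{n+1}|\le |u|_V |v|_H^2$. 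Hence both series converge absolutely, so the rearrangement above is rigorous rather than merely formal.

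With absolute convergence secured, the remainder of the argument is purely algebraic bookkeeping: the reindexed first sum and the second sum are identical, so they cancel and \eqref{B_zero_property} follows. I do not anticipate a serious obstacle; the care lies entirely in handling the boundary term at $n=1$ and in making the convergence estimate explicit, so that the telescoping is justified for genuine $\ell^2$-type data rather than only for finitely supported sequences.
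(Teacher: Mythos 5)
Your proof is correct and follows essentially the same route as the paper's: expand $\langle B(u,v),v\rangle_{H}$ from the definition \eqref{B}, drop the $n=1$ term using the boundary condition $u_{0}=0$, shift the index, and cancel the two identical sums. The only difference is that you also justify the splitting and reindexing by establishing absolute convergence of both series via Cauchy--Schwarz (giving the bound $|u|_{V}|v|_{H}^{2}$), a point the paper's proof leaves implicit; this is added rigor, not a different approach.
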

\begin{proof}
\begin{align}
\langle B(u,v),v \rangle_{H} &= \sum_{n=1}^{\infty}(B(u,v))_{n}v_{n} \nonumber
\\& = \sum_{n=1}^{\infty}(k_{n-1}u_{n-1}v_{n-1}-k_{n}u_{n}v_{n+1})v_{n} \nonumber
\\& = \sum_{n=2}^{\infty}k_{n-1}u_{n-1}v_{n-1}v_{n}-\sum_{n=1}^{\infty}k_{n}u_{n}v_{n}v_{n+1} \ \text{ (since $u_{0}=0$}) \nonumber
\\& = \sum_{n=1}^{\infty}k_{n}u_{n}v_{n}v_{n+1}-\sum_{n=1}^{\infty}k_{n}u_{n}v_{n}v_{n+1} \nonumber
\\& = 0. \nonumber
\end{align}
\end{proof}
\noindent We note that \eqref{B_zero_property} holds when $u\in H$ and $v\in V$ as well. We now give the following symmetry property of the operator $B$.
\begin{proposition}
Let $u,v\in H$ and $w\in V$. Then
\begin{equation}\nonumber
\langle B(u,v),w \rangle_{V'\times V} = - \langle B(u,w),v \rangle_{H}.
\end{equation}
\end{proposition}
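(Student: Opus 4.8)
The plan is to establish the identity by a direct componentwise expansion of both sides, using the definition \eqref{B} of $B$ together with the dual pairing $\langle\cdot,\cdot\rangle_{V'\times V}=\sum_{n\ge1}(\cdot)_{n}(\cdot)_{n}$, and then to match the two resulting series after a single index shift made possible by the boundary convention $u_{0}=0$. I would first unfold the left-hand side as
\[
\langle B(u,v),w \rangle_{V'\times V}=\sum_{n=1}^{\infty}\big(k_{n-1}u_{n-1}v_{n-1}-k_{n}u_{n}v_{n+1}\big)w_{n},
\]
split it into two series, discard the $n=1$ term of the first one because $u_{0}=0$, and reindex by $m=n-1$, so that the left-hand side takes the symmetric form $\sum_{n\ge1}k_{n}u_{n}v_{n}w_{n+1}-\sum_{n\ge1}k_{n}u_{n}v_{n+1}w_{n}$.

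Next I would repeat exactly the same steps on $\langle B(u,w),v\rangle_{H}=\sum_{n\ge1}(k_{n-1}u_{n-1}w_{n-1}-k_{n}u_{n}w_{n+1})v_{n}$, obtaining $\sum_{n\ge1}k_{n}u_{n}w_{n}v_{n+1}-\sum_{n\ge1}k_{n}u_{n}w_{n+1}v_{n}$. Negating this and comparing with the expression found for the left-hand side, the two series should cancel in pairs: the term $k_{n}u_{n}v_{n}w_{n+1}$ agrees with $k_{n}u_{n}w_{n+1}v_{n}$, and $-k_{n}u_{n}v_{n+1}w_{n}$ agrees with $-k_{n}u_{n}w_{n}v_{n+1}$, which is the desired equality. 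The algebra here is essentially the same bookkeeping already used in the proof of \eqref{B_zero_property}.

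The step that actually requires justification is not the matching but the legitimacy of splitting and reindexing, i.e. the absolute convergence of all the series in play; this is where the hypotheses $u,v\in H$ and $w\in V$ enter. A typical term is $k_{n}|u_{n}||v_{n}||w_{n+1}|$, and since $k_{n}=\lambda^{-1}k_{n+1}$ I would rewrite it as $\lambda^{-1}|u_{n}||v_{n}|\,(k_{n+1}|w_{n+1}|)$; bounding $|u_{n}|\le|u|_{H}$ and applying Cauchy-Schwarz to the remaining factors yields a bound of order $|u|_{H}|v|_{H}|w|_{V}$, and the companion series $\sum_{n}k_{n}|u_{n}||v_{n+1}||w_{n}|$ is estimated in the same way. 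This is precisely the estimate underlying the extension of $B$ to $B(\cdot,\cdot):H\times H\to V'$ announced in the functional setting, so I would first record that bound to guarantee $B(u,v)\in V'$ and that every series is finite, and only then carry out the term-by-term identification above. The main obstacle is thus purely this convergence bookkeeping rather than any conceptual difficulty.
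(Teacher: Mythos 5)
Your proposal is correct and takes essentially the same route as the paper: a componentwise expansion of the dual pairing followed by an index shift justified by the boundary convention $u_{0}=0$, after which the two series are identified term by term. The only difference is cosmetic--you bring both sides to a common symmetric form instead of transforming the left side directly into $-\sum_{n}(B(u,w))_{n}v_{n}$--plus you explicitly verify absolute convergence via the bound of order $|u|_{H}|v|_{H}|w|_{V}$, a point the paper's purely formal computation leaves implicit.
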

\begin{proof}
\begin{align}
\langle B(u,v),w \rangle_{V'\times V} &= \sum_{n=1}^{\infty}(B(u,v))_{n}w_{n} \nonumber
\\& = \sum_{n=1}^{\infty}(k_{n-1}u_{n-1}v_{n-1}-k_{n}u_{n}v_{n+1})w_{n} \nonumber
\\& = \sum_{n=1}^{\infty}k_{n-1}u_{n-1}v_{n-1}w_{n}-\sum_{n=1}^{\infty}k_{n}u_{n}v_{n+1}w_{n}  \nonumber
\end{align}
\begin{align}
& = \sum_{n=2}^{\infty}k_{n-1}u_{n-1}v_{n-1}w_{n}-\sum_{n=1}^{\infty}k_{n-1}u_{n-1}v_{n}w_{n-1} \text{ since $u_{0}=0$}\nonumber
\\& = \sum_{n=1}^{\infty}k_{n}u_{n}v_{n}w_{n+1}-\sum_{n=1}^{\infty}k_{n-1}u_{n-1}v_{n}w_{n-1} \nonumber
\\& = -\sum_{n=1}^{\infty}(k_{n-1}u_{n-1}w_{n-1}-k_{n}u_{n}w_{n+1})v_{n} \nonumber
\\& = -\sum_{n=1}^{\infty}(B(u,w))_{n}v_{n} \nonumber
\\& = -\langle B(u,w),v\rangle_{H}.\nonumber
\end{align}
\end{proof}
\noindent Now, we state a general Lemma for the operator $B$.
\begin{lemma}
Let $d,s\in \mathbb{R}$. Then $B$ can be defined as $B(\cdot,\cdot):V_{2d-2s}\times V_{1+2s}\rightarrow V_{2d}$. Moreover, there exists a constant $C>0$ depending on $d$ and $s$ such that
\begin{equation}\nonumber
|B(u,v)|_{V_{2d}} \leq C_{d,s} |u|_{V_{2d-2s}}|v|_{V_{1+2s}}
\end{equation}
for all $u\in V_{2d-2s}$ and $v\in V_{1+2s}$.
\end{lemma}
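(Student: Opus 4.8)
The plan is to estimate the $V_{2d}$-norm of $B(u,v)$ directly from the component formula \eqref{B} and to reduce everything to the two prescribed norms by an elementary exponent-bookkeeping argument, exploiting the geometric structure $k_n=k_0\lambda^n$. First I would write
\begin{equation}\nonumber
|B(u,v)|_{V_{2d}}^2 = \sum_{n=1}^{\infty} k_n^{4d}\big(k_{n-1}u_{n-1}v_{n-1}-k_n u_n v_{n+1}\big)^2,
\end{equation}
and apply $(a-b)^2\le 2a^2+2b^2$ to split this into a ``diagonal'' sum carrying the factor $u_{n-1}v_{n-1}$ and an ``off-diagonal'' sum carrying $u_n v_{n+1}$. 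This turns the task into controlling two sums of the shape $\sum k^{4d+2}u^2 v^2$, each with a single index shift.

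The central observation is that the target exponent splits exactly as $4d+2=(4d-4s)+(2+4s)$, i.e.\ as $2(2d-2s)$ for the $u$-factor plus $2(1+2s)$ for the $v$-factor. On each shell I would therefore factor $k^{4d+2}=k^{4d-4s}\cdot k^{2+4s}$, assigning the first power to $u$ and the second to $v$. Since $k_{n\pm1}=\lambda^{\pm1}k_n$, the index shifts inside the two sums (the reindexing $n\mapsto n-1$ in the diagonal sum, where the boundary condition $u_0=0$ legitimizes dropping the $m=0$ term, and the appearance of $v_{n+1}$ in the off-diagonal sum) produce only constant factors $\lambda^{4d}$ and $\lambda^{-(2+4s)}$ respectively. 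After relabelling, each sum takes the form $\sum_m \big(k_m^{4d-4s}u_m^2\big)\big(k_m^{2+4s}v_m^2\big)$ up to such a constant.

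To finish I would bound each such sum by the crude but decisive estimate that for nonnegative sequences $\sum_m a_m b_m\le\big(\sum_m a_m\big)\big(\sum_m b_m\big)$. Taking $a_m=k_m^{4d-4s}u_m^2$ and $b_m=k_m^{2+4s}v_m^2$ yields $\sum_m a_m=|u|_{V_{2d-2s}}^2$ and $\sum_m b_m=|v|_{V_{1+2s}}^2$, which is precisely the desired product of norms. Collecting the two contributions gives the claim with $C_{d,s}^2=2\big(\lambda^{4d}+\lambda^{-(2+4s)}\big)$, finite for every real $d,s$; the finiteness of the right-hand side simultaneously shows $B(u,v)\in V_{2d}$ whenever $u\in V_{2d-2s}$ and $v\in V_{1+2s}$, so that $B$ is well defined on the asserted spaces.

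I do not expect a genuine obstacle here; the only delicate point is the exponent/index bookkeeping, namely tracking the geometric factors coming from $k_{n\pm1}=\lambda^{\pm1}k_n$ and confirming that $u_0=0$ makes the reindexing in the diagonal sum exact. The reason the coarse product bound (equivalently, bounding $\sup_m b_m$ by $\sum_m b_m$) loses nothing essential is that it removes any need for summability of a cross term and lets a single constant work uniformly for all $d,s\in\mathbb{R}$, matching the generality of the statement.
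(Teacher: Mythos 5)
Your proposal is correct and follows essentially the same route as the paper: the same starting expansion of $|B(u,v)|_{V_{2d}}^2$, the same exponent splitting $4d+2=(4d-4s)+(2+4s)$ with geometric factors from $k_{n\pm 1}=\lambda^{\pm 1}k_n$, and the same final estimate (your product bound $\sum_m a_m b_m\le(\sum_m a_m)(\sum_m b_m)$ is exactly the paper's $\sup$-times-sum step). The only cosmetic differences are that you absorb the cross term at the start via $(a-b)^2\le 2a^2+2b^2$ where the paper expands the square and applies Young's inequality to the middle term, and you keep general $\lambda$ where the paper specializes to $\lambda=2$, yielding a slightly different but equally valid constant.
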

\begin{proof}
\noindent We calculate the norm in $V_{2d}$ as
\begin{align}\label{B_V_2d_norm}
&|B(u,v)|_{V_{2d}}^{2} \nonumber
\\&= \sum_{n=1}^{\infty}k_{n}^{4d}|(B(u,v))_{n}|^{2}\nonumber
\\& = \sum_{n=1}^{\infty}k_{n}^{4d}|k_{n-1}u_{n-1}v_{n-1} - k_{n}u_{n}v_{n+1}|^{2}\nonumber
\\&=\sum_{n=1}^{\infty}k_{n}^{4d}|k_{n-1}^{2}u_{n-1}^{2}v_{n-1}^{2}-2k_{n-1}k_{n}u_{n-1}u_{n}v_{n-1}v_{n+1}+k_{n}^{2}u_{n}^{2}v_{n+1}^{2}|\nonumber
\\&\leq \sum_{n=1}^{\infty}k_{n-1}^{2}k_{n}^{4d}u_{n-1}^{2}v_{n-1}^{2} + \sum_{n=1}^{\infty}2k_{n-1}k_{n}^{4d+1}|u_{n-1}u_{n}v_{n-1}v_{n+1}|+ \sum_{n=1}^{\infty}k_{n}^{4d+2}u_{n}^{2}v_{n+1}^{2}.
\end{align}\\

\noindent Now we consider the three terms in \eqref{B_V_2d_norm}. The first term is

\begin{align}\label{first_part_in_B_bounded_estimate}
\sum_{n=1}^{\infty}k_{n-1}^{2}k_{n}^{4d}u_{n-1}^{2}v_{n-1}^{2} &=2^{4d}\sum_{n=1}^{\infty}k_{n-1}^{4d}k_{n-1}^{2}u_{n-1}^{2}v_{n-1}^{2}\text{ since $k_{n}=2k_{n-1}$ and $k_{n}^{4d}=2^{4d}k_{n-1}^{4d}$}\nonumber
\\& =2^{4d}\sum_{n=1}^{\infty}k_{n-1}^{4d-4s}k_{n-1}^{2+4s}u_{n-1}^{2}v_{n-1}^{2} \nonumber
\\&\leq 2^{4d}\sum_{n=1}^{\infty}\bigg[\sup_{n}k_{n-1}^{4d-4s}u_{n-1}^{2}\bigg]k_{n-1}^{2+4s}v_{n-1}^{2}\nonumber
\\& = 2^{4d}\bigg[\sup_{n}k_{n-1}^{4d-4s}u_{n-1}^{2}\bigg]\sum_{n=1}^{\infty}k_{n-1}^{2+4s}v_{n-1}^{2}\nonumber
\\&\leq 2^{4d}\sum_{n=1}^{\infty}k_{n-1}^{4d-4s}u_{n-1}^{2}\sum_{n=1}^{\infty}k_{n-1}^{2+4s}v_{n-1}^{2}\nonumber
\\& = 2^{4d}|u|_{V_{2d-2s}}^{2}|v|_{V_{1+2s}}^{2}.
\end{align}
\noindent The second term in \eqref{B_V_2d_norm} is
\begin{align}\label{second_part_in_B_bounded_estimate}
&\sum_{n=1}^{\infty}2k_{n-1}k_{n}^{4d+1}|u_{n-1}u_{n}v_{n-1}v_{n+1}| \nonumber \\&=\sum_{n=1}^{\infty}k_{n}^{4d+1}k_{n}|u_{n-1}u_{n}v_{n-1}v_{n+1}| \nonumber
\\& \leq\sum_{n=1}^{\infty}k_{n}^{4d+2}[u_{n-1}^{2}v_{n-1}^{2}+u_{n}^{2}v_{n+1}^{2}] \nonumber
\\&=\sum_{n=1}^{\infty}k_{n}^{4d+2}u_{n-1}^{2}v_{n-1}^{2} +\sum_{n=1}^{\infty}k_{n}^{4d+2}u_{n}^{2}v_{n+1}^{2}\nonumber
\\&= 2^{4d+2}\sum_{n=1}^{\infty}k_{n-1}^{4d+2}u_{n-1}^{2}v_{n-1}^{2} +\sum_{n=1}^{\infty}k_{n}^{4d-4s}u_{n}^{2}k_{n}^{2+4s}v_{n+1}^{2}\nonumber
\\& (\text{since }k_{n}=2k_{n-1}, k_{n}^{4d+2}=2^{4d+2}k_{n-1}^{4d+2})\nonumber
\\&= 2^{4d+2}\sum_{n=1}^{\infty}k_{n-1}^{4d-4s}u_{n-1}^{2}k_{n-1}^{2+4s}v_{n-1}^{2} +\bigg(\frac{1}{2}\bigg)^{2+4s}\sum_{n=1}^{\infty}k_{n}^{4d-4s}u_{n}^{2}k_{n+1}^{2+4s}v_{n+1}^{2}\nonumber
\\& \bigg[\text{since }k_{n}=\bigg(\frac{1}{2}\bigg)k_{n+1}, k_{n}^{2+4s}=\bigg(\frac{1}{2}\bigg)^{2+4s}k_{n+1}^{2+4s}\bigg]\nonumber
\\& \leq 2^{4d+2}\sup_{n}\bigg[k_{n-1}^{4d-4s}u_{n-1}^{2}\bigg]\sum_{n=1}^{\infty}k_{n-1}^{2+4s}v_{n-1}^{2} +\bigg(\frac{1}{2}\bigg)^{2+4s}\sup_{n}\bigg[k_{n}^{4d-4s}u_{n}^{2}\bigg]\sum_{n=1}^{\infty}k_{n+1}^{2+4s}v_{n+1}^{2}\nonumber
\\& \leq \max\bigg\{2^{4d+2},\bigg(\frac{1}{2}\bigg)^{2+4s}\bigg\}[|u|_{V_{2d-2s}}^{2}|v|_{V_{1+2s}}^{2}+|u|_{V_{2d-2s}}^{2}|v|_{V_{1+2s}}^{2}].
\end{align}
\noindent The third term in \eqref{B_V_2d_norm},
\begin{align}\label{third_part_in_B_bounded_estimate}
\sum_{n=1}^{\infty}k_{n}^{4d+2}u_{n}^{2}v_{n+1}^{2}&=\sum_{n=1}^{\infty}k_{n}^{4d-4s}u_{n}^{2}k_{n}^{2+4s}v_{n+1}^{2}\nonumber
\\& \leq \bigg(\frac{1}{2}\bigg)^{2+4s} \sup_{n}\bigg[k_{n}^{4d-4s}u_{n}^{2}\bigg]\sum_{n=1}^{\infty}k_{n+1}^{2+4s}v_{n+1}^{2}\nonumber
\\& \leq \bigg(\frac{1}{2}\bigg)^{2+4s} |u|_{V_{2d-2s}}^{2}|v|_{V_{1+2s}}^{2}.
\end{align}
\noindent The result follows from \eqref{B_V_2d_norm}, \eqref{first_part_in_B_bounded_estimate}, \eqref{second_part_in_B_bounded_estimate} and \eqref{third_part_in_B_bounded_estimate} with $C_{d,s}=\max\bigg\{2^{4d+2},\bigg(\frac{1}{2}\bigg)^{2+4s}\bigg\}$.
\end{proof}
\noindent We immediately have the following corollary.
\begin{coro}\label{B_boundedness_property}
There exists a constant $C>0$ such that
\begin{enumerate}
\item $|B(u,v)|_{H} \leq C |u|_{V}|v|_{H} \text{ for all } u\in V, v\in H$,
\item $|B(u,v)|_{H} \leq C |u|_{H}|v|_{V} \text{ for all } u\in H, v\in V$,
\item $|B(u,v)|_{V'} \leq C |u|_{H}|v|_{H} \text{ for all } u,v\in H$.
\end{enumerate}
\end{coro}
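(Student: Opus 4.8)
The plan is to obtain each of the three inequalities as a direct specialization of the preceding Lemma, since that Lemma already delivers the boundedness of $B$ across the whole scale of spaces, $B:V_{2d-2s}\times V_{1+2s}\to V_{2d}$. The only work is to solve, in each case, for the parameters $d$ and $s$ that make the three exponents $2d$, $2d-2s$ and $1+2s$ match the spaces appearing in the statement, using the identifications $V_{0}=H$, $V_{1}=V$ and $V_{-1}=V'$ recorded in the functional setting.

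For part (1), I would set $2d=0$ so that the target space $V_{2d}$ is $H$, and then choose $s$ so that $2d-2s=1$; this forces $d=0$ and $s=-\tfrac{1}{2}$, and with these values $1+2s=0$, so that $v$ indeed lives in $V_{0}=H$. The Lemma then reads $|B(u,v)|_{H}\leq C_{0,-1/2}|u|_{V}|v|_{H}$, which is exactly (1). For part (2), I would again take $d=0$ (target $H$) but now $s=0$, so that $2d-2s=0$ and $1+2s=1$; the Lemma gives $|B(u,v)|_{H}\leq C_{0,0}|u|_{H}|v|_{V}$, which is (2). For part (3), I would set $2d=-1$ so the target is $V_{-1}=V'$, and then choose $s=-\tfrac{1}{2}$ so that both remaining exponents vanish, namely $2d-2s=0$ and $1+2s=0$; the Lemma then yields $|B(u,v)|_{V'}\leq C_{-1/2,-1/2}|u|_{H}|v|_{H}$, which is (3).

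Finally, I would take $C:=\max\{C_{0,-1/2},\,C_{0,0},\,C_{-1/2,-1/2}\}$ so that a single constant works in all three inequalities; with the explicit value $C_{d,s}=\max\{2^{4d+2},(1/2)^{2+4s}\}$ furnished by the Lemma, this maximum is simply $4$. There is essentially no obstacle here: the entire content is already contained in the Lemma, and the only point requiring care is the bookkeeping of the index equations, in particular checking that each chosen pair $(d,s)$ \emph{simultaneously} places all three arguments in the intended spaces, which is exactly why each case pins down both $d$ and $s$ uniquely.
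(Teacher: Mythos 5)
Your proposal is correct and is exactly the route the paper intends: the corollary is stated as an immediate specialization of the preceding Lemma, and your parameter choices $(d,s)=(0,-\tfrac{1}{2})$, $(0,0)$, $(-\tfrac{1}{2},-\tfrac{1}{2})$ correctly realize the three inequalities, with the identifications $V_{0}=H$, $V_{1}=V$, $V_{-1}=V'$ all checked consistently. The explicit uniform constant $C=\max\{C_{0,-1/2},C_{0,0},C_{-1/2,-1/2}\}=4$ is also computed correctly from $C_{d,s}=\max\{2^{4d+2},(1/2)^{2+4s}\}$.
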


\section[Well-posedness of Stochastic Shell Models of Turbulence] {Well-posedness of Stochastic Shell Models of Turbulence}

\noindent We start this section by recalling some results for a linear stochastic differential equation driven by an additive noise.

\subsection{Stochastic Linear Differential Equations with Additive Noise}

\noindent Let $(\Omega, \mathcal{F}, P)$ be a given probability space, $\mathcal{F}_{t}$ be the associated filtration for $t\geq 0$, and let $H$ and $U$ be two Hilbert spaces. Consider a $Q$-Wiener process $W$ defined on $(\Omega, \mathcal{F}, P)$ with covariance operator $Q\in \mathcal{L}(U,U)$. Consider the SDE
\begin{equation}\label{linear_SDE}
dX(t)=AX(t)dt+f(t)dt + GdW(t),
\end{equation}
\begin{equation}
X(0)=\xi,
\end{equation}
\noindent where $A:D(A)\subset H\rightarrow H$ and $G:U\rightarrow H$ are linear operators and $f$ is a $H$-valued stochastic process.
\begin{definition}\emph{(Weak Solution)}\nonumber
We say that an $H$-valued predictable process $X(t)$, where $t\in [0,T]$ is a \emph{weak solution} of the equation \eqref{linear_SDE} if the sample paths of $X(\cdot)$ are $P$-a.s. integrable and if for all $\phi \in D(A^{*})$ and for all $t\in [0,T],$ we have
\begin{equation}
\langle X(t),\phi \rangle = \langle \xi,\phi \rangle + \int_{0}^{t} \langle X(s),A^{*}\phi \rangle ds + \langle f(s),\phi \rangle ds + \langle GW(t),\phi \rangle \ \ P\text{-a.s.}
\end{equation}
\end{definition}
\noindent Consider the following assumptions.
\begin{enumerate}
\item The operator $A$ generates a strongly continuous semigroup $S(\cdot)$ in $H$ [i.e., the deterministic Cauchy problem $u'(t)=Au(t)$ with $u(0)=\xi\in H$ is uniformly well-posed],
\item $G\in \mathcal{L}(U:H)$ [i.e., $G$ is bounded],
\item $f$ is a predictable process with integrable sample paths on $[0,T]$ for any $T>0$,
\item $\xi$ is $\mathcal{F}_{0}$-measurable,
\item $\displaystyle \int_{0}^{T}|S(r)G|^{2}_{L_{2}^{0}}dr =  \displaystyle \int_{0}^{T}Tr[S(r)GQG^{*}S^{*}(r)]dr <\infty.$ (See remark \ref{space_of_all_Hilbert-Schmidt_operators}.)
\end{enumerate}

\begin{remark}\label{space_of_all_Hilbert-Schmidt_operators}(\emph{The Space $L_{2}^{0}$})
Let $(\Omega, \mathcal{F}, P)$ be a probability space, $H$, $U$ be a separable Hilbert space, $\{e_{j}\}$ be a complete orthonormal system in $H$ and we are given a $H$-valued $Q$-Wiener process on $[0,T]$. Suppose $\lambda_{j}>0$ for all $j=1,2,\cdots$. Fix $T<\infty$ and let $L=L(H,U).$ We define the space $$H_{0}=Q^{1/2}(H),$$ endowed with the inner product $$\langle u, v \rangle_{0}=\sum_{j=0}^{\infty}\frac{1}{\lambda_{j}}\langle u, e_{j} \rangle\langle v, e_{j} \rangle=\langle Q^{-1/2}u, Q^{-1/2}v \rangle.$$
$H_{0}$ is a Hilbert space and a subspace of $H$. We now introduce space of all Hilbert-Schmidt operators from $H_{0}$ into $H$, denoted by $L_{2}^{0}=L_{2}(H_{0},H)$. It is a separable Hilbert space equipped with the norm 
\begin{align}
\|\psi\|^{2}_{L_{2}^{0}}&=\sum_{m,n=1}^{\infty}|\langle\psi g_{m},f_{n}\rangle|^{2}\nonumber
\\ &=\sum_{m,n=1}^{\infty}\lambda_{m}|\langle\psi e_{m},f_{n}\rangle|^{2}\nonumber
\\ &=\|\psi Q^{1/2}\|^{2}\nonumber
\\ &=\text{Tr}(\psi Q \psi^{*})\nonumber
\end{align}
where $\{e_{j}\}$, $\{f_{j}\}$ and $\{g_{j}\}$ (with $g_{j}=\sqrt{\lambda_{j}}e_{j}$ where $j=1,2,\cdots$) are complete orthonormal bases in $H$, $U$, and $H_{0}$ respectively. We have $L\subset L_{2}^{0}$ and in particular, the space $L_{2}^{0}$ contains genuinely unbounded operators on $U$.
\end{remark}
\begin{theorem}\label{linear_SODE_existence}
Under the above assumptions 1,2,3,4, and 5, the SDE \eqref{linear_SDE} has exactly one weak solution given by the formula
\begin{equation}\nonumber
X(t) = S(t)\xi + \int_{0}^{t}S(t-s)f(s)ds + \int_{0}^{t}S(t-s)GdW(s) \ \ \text{where } t\in [0,T].
\end{equation}
\noindent Moreover, P-a.s. $X\in C([0,T],H).$
\end{theorem}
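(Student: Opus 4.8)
The plan is to take the variation-of-constants formula in the statement as the candidate solution and verify, in turn, that it is well defined, that it solves the weak equation, that it is the only weak solution, and that it has continuous paths. First I would check that each of the three terms makes sense as an $H$-valued process. The term $S(t)\xi$ is continuous in $t$ by the strong continuity of the semigroup (assumption 1) and measurable by assumption 4; the deterministic convolution $\int_{0}^{t}S(t-s)f(s)\,ds$ is a well-defined Bochner integral because $S(\cdot)$ is bounded on compact time intervals while $f$ has integrable paths (assumptions 1 and 3). For the stochastic convolution $W_{A}(t):=\int_{0}^{t}S(t-s)G\,dW(s)$ I would invoke the It\^{o} isometry in $L_{2}^{0}$: assumption 5 gives $\int_{0}^{t}\|S(t-s)G\|_{L_{2}^{0}}^{2}\,ds<\infty$, so $W_{A}(t)$ is a well-defined centered Gaussian element of $H$ for each $t$.

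Next I would verify the weak formulation. Fix $\phi\in D(A^{*})$. For the deterministic pieces I would use that $s\mapsto\langle S(s)\eta,\phi\rangle$ is differentiable with derivative $\langle S(s)\eta,A^{*}\phi\rangle$ for every $\eta\in H$, which yields
\[
\langle S(t)\xi,\phi\rangle=\langle\xi,\phi\rangle+\int_{0}^{t}\langle S(s)\xi,A^{*}\phi\rangle\,ds,
\]
together with the analogous identity for the $f$-convolution after an application of the ordinary Fubini theorem. The only genuinely stochastic input is the convolution term, where I would apply the stochastic Fubini theorem to interchange the time integral produced by $A^{*}\phi$ with the stochastic integral, obtaining
\[
\langle W_{A}(t),\phi\rangle=\langle GW(t),\phi\rangle+\int_{0}^{t}\langle W_{A}(s),A^{*}\phi\rangle\,ds.
\]
Summing the three identities reproduces exactly the defining relation of a weak solution.

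For uniqueness I would show that every weak solution coincides with the candidate. Given a weak solution $X$, fix $t\in[0,T]$ and $\zeta\in D(A^{*})$ and use the time-dependent test function $\phi(s)=S^{*}(t-s)\zeta$, which lies in $D(A^{*})$ and satisfies $\tfrac{d}{ds}\phi(s)=-A^{*}\phi(s)$. Extending the weak identity to such $C^{1}$-in-time test functions via an integration-by-parts (product-rule) argument on $\langle X(s),\phi(s)\rangle$ makes the two occurrences of $A^{*}$ cancel and reorganizes the noise contribution into $\langle W_{A}(t),\zeta\rangle$, leaving
\[
\langle X(t),\zeta\rangle=\langle S(t)\xi,\zeta\rangle+\int_{0}^{t}\langle S(t-s)f(s),\zeta\rangle\,ds+\langle W_{A}(t),\zeta\rangle.
\]
Since $D(A^{*})$ is dense in $H$, this forces $X(t)$ to equal the candidate $P$-a.s., establishing both existence (the formula is a weak solution) and uniqueness simultaneously.

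Finally, the regularity $X\in C([0,T],H)$. The deterministic terms are continuous by the arguments above, so all the work lies in the stochastic convolution, and this is the step I expect to be the main obstacle: $W_{A}$ is \emph{not} a martingale, since the semigroup sits inside the integral, so path continuity does not follow from a martingale maximal inequality. I would instead use the factorization method of Da Prato--Kwapie\'{n}--Zabczyk: for $\alpha\in(0,1)$ write, via the Beta-function identity and the semigroup law,
\[
S(t-s)=\frac{\sin\pi\alpha}{\pi}\int_{s}^{t}(t-r)^{\alpha-1}(r-s)^{-\alpha}S(t-r)S(r-s)\,dr,
\]
so that by stochastic Fubini $W_{A}(t)=\frac{\sin\pi\alpha}{\pi}\int_{0}^{t}(t-r)^{\alpha-1}S(t-r)Y(r)\,dr$ with $Y(r)=\int_{0}^{r}(r-s)^{-\alpha}S(r-s)G\,dW(s)$. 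I would then estimate $\mathbb{E}\int_{0}^{T}|Y(r)|^{p}\,dr<\infty$ for $p$ large (again using assumption 5), and invoke the boundedness of the factorization operator from $L^{p}([0,T],H)$ into $C([0,T],H)$ whenever $\alpha>1/p$. This delivers the $P$-a.s.\ continuity of the paths and completes the proof.
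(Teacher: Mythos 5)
Your proposal is correct, but note that the paper does not actually prove Theorem \ref{linear_SODE_existence}: its ``proof'' is a one-line citation to \cite{DaPrato}, page 121. What you have written is essentially a reconstruction of the argument in that reference: well-definedness of the mild formula via the It\^{o} isometry and assumption 5; verification of the weak identity by differentiating $s\mapsto\langle S(s)\eta,\phi\rangle$ for $\phi\in D(A^{*})$ (valid because $S^{*}(\cdot)$ is again a strongly continuous semigroup with generator $A^{*}$) combined with the deterministic and stochastic Fubini theorems; uniqueness via the time-dependent test functions $\phi(s)=S^{*}(t-s)\zeta$, whose admissibility indeed requires the product-rule extension you flag (this is a separate lemma in the reference, not a triviality, so keep it explicit); and path continuity by the factorization method. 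The one point you should tighten is the last step: the factorization bound on $Y(r)$ requires $\int_{0}^{T}s^{-2\alpha}\|S(s)G\|_{L_{2}^{0}}^{2}\,ds<\infty$ for some $\alpha>1/p$, and this does \emph{not} follow from assumption 5 alone, since square integrability of $\|S(\cdot)G\|_{L_{2}^{0}}$ does not control the singular weight $s^{-2\alpha}$. In the paper's setting the bound is nevertheless available for every $\alpha<1/2$: $W$ is a $Q$-Wiener process with $\mathrm{Tr}\,Q<\infty$ and $G$ is bounded (assumption 2), so $\|S(s)G\|_{L_{2}^{0}}\leq\|S(s)\|\,\|G\|\,(\mathrm{Tr}\,Q)^{1/2}$ is bounded on $[0,T]$; you should therefore invoke assumptions 1--2 at this point rather than assumption 5. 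With that correction your argument is complete and is, in substance, the proof the paper delegates to the literature.
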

\begin{proof}
\noindent For a proof of Theorem \eqref{linear_SODE_existence} we refer \cite{DaPrato}, page 121.
\end{proof}

\subsection{Wellposedness of Shell Models}

\subsubsection{Existence of Solution}

Using \eqref{A} and \eqref{B}, we write \eqref{shell_model_differential_form_componentwise} in the abstract form. We have
\begin{equation}\label{shell_model_differential_form_infinite_dimensional}
du(t) +\nu Au(t) = B(u(t),u(t))dt + dW(t), \ t\geq 0
\end{equation}
\noindent where $\{W(t)\}_{t\geq 0}$ is an $H$-valued Brownian motion defined on a filtered probability space $(\Omega, \mathcal{F}, (\mathcal{F})_{t\geq 0},P)$ with nuclear covariance operator $Q$. We assume an initial condition given by an $\mathcal{F}_{0}$ measurable random variable $u(0): \Omega \rightarrow H$.
\begin{definition}
We say that \eqref{shell_model_differential_form_infinite_dimensional} is written in its integral form if
\begin{align}\label{shell_model_integral_weak_form_infinite_dimensional}
\langle u(t),\phi(t)\rangle_{H} + \nu \int_{0}^{t}\langle u(s),A(\phi(s))\rangle_{H}ds &= \langle u(0),\phi(t)\rangle_{H} + \int_{0}^{t}\langle B(u(s),\phi(s)),u(s)\rangle_{H}ds\nonumber
\\& + \langle W(t),\phi(t)\rangle_{H}
\end{align}
for all $\phi\in D(A)$ and for all $t\in[0,T].$
\end{definition}
\begin{definition}
We say that an adapted process $u\in C([0,T]:H)$ is a weak solution of \eqref{shell_model_differential_form_infinite_dimensional} if $u$ satisfies \eqref{shell_model_integral_weak_form_infinite_dimensional} for all $\phi\in D(A)$ and for some $T>0$. We say this solution is global if such $u\in C([0,T]:H)$ $P$-a.s. for any $T>0$.
\end{definition}
\begin{theorem}\label{existence_weak_solution}
Let $u(0):\Omega \rightarrow H$, a $\mathcal{F}_{0}$-measurable random variable be an initial condition. There exists a global weak solution of \eqref{shell_model_differential_form_infinite_dimensional}.
\end{theorem}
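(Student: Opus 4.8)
The plan is to use a pathwise change-of-variables argument that reduces the stochastic problem to a family of deterministic evolution equations indexed by $\omega$, each of which is solved by Galerkin approximation together with a compactness argument. First I would introduce the Ornstein--Uhlenbeck process $z$, the unique weak solution of the linear problem
\begin{equation}\nonumber
dz(t) + \nu A z(t)\,dt = dW(t), \qquad z(0) = 0.
\end{equation}
By Theorem \ref{linear_SODE_existence} (applied with $f\equiv 0$, $G=\mathrm{Id}$, and $S(\cdot)=e^{-\nu A\cdot}$) this process exists, is unique, and satisfies $z\in C([0,T],H)$ $P$-a.s.; in particular $M(\omega):=\sup_{t\in[0,T]}|z(t)|_{H}<\infty$ for almost every $\omega$. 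Setting $v:=u-z$, the unknown $v$ formally solves
\begin{equation}\nonumber
\frac{dv}{dt} + \nu A v = B(v+z,\,v+z), \qquad v(0)=u(0),
\end{equation}
which, for each fixed $\omega$ in the full-measure set where $z(\cdot,\omega)\in C([0,T],H)$, is a \emph{deterministic} evolution equation with the (now frozen) random coefficient $z$ entering only through the quadratic term.

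Next I would solve this pathwise equation by Galerkin truncation: projecting onto $\mathrm{span}\{e_{1},\dots,e_{m}\}$ gives finite-dimensional ODE systems $v^{(m)}$ with local solutions by standard ODE theory. The key a priori estimate comes from testing against $v^{(m)}$ and expanding $B(v^{(m)}+z,\,v^{(m)}+z)$. Using the cancellation \eqref{B_zero_property}, valid for second argument in $V$, all terms whose second slot is $v^{(m)}$ vanish, leaving only $\langle B(v^{(m)},z),v^{(m)}\rangle_{H}+\langle B(z,z),v^{(m)}\rangle_{V'\times V}$. The first is bounded via Corollary \ref{B_boundedness_property}(1) by $C|v^{(m)}|_{V}|z|_{H}|v^{(m)}|_{H}$, and the second via Corollary \ref{B_boundedness_property}(3) by $C|z|_{H}^{2}|v^{(m)}|_{V}$. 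After Young's inequality these yield
\begin{equation}\nonumber
\frac{d}{dt}|v^{(m)}|_{H}^{2} + \nu|v^{(m)}|_{V}^{2} \le \frac{C}{\nu}\,|z|_{H}^{2}\,|v^{(m)}|_{H}^{2} + \frac{C}{\nu}\,|z|_{H}^{4}.
\end{equation}
Since $|z|_{H}$ is bounded on $[0,T]$, Gronwall's inequality produces bounds on $v^{(m)}$ in $L^{\infty}([0,T],H)\cap L^{2}([0,T],V)$ that are uniform in $m$ (so the local ODE solutions extend globally), and a matching bound on $\tfrac{d}{dt}v^{(m)}$ in $L^{2}([0,T],V')$ follows from the equation together with Corollary \ref{B_boundedness_property}(3).

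Finally, the compact embedding $V\subset H\subset V'$ lets me invoke an Aubin--Lions compactness argument to extract a subsequence converging weakly-$*$ in $L^{\infty}([0,T],H)$, weakly in $L^{2}([0,T],V)$, and strongly in $L^{2}([0,T],H)$. This strong convergence is precisely what is needed to pass to the limit in the quadratic term and to identify the limit $v$ as a weak solution of the pathwise equation; then $u:=v+z$ satisfies \eqref{shell_model_integral_weak_form_infinite_dimensional}, giving a global weak solution of \eqref{shell_model_differential_form_infinite_dimensional}. The main obstacle is exactly this passage to the limit in the nonlinearity: as in the Navier--Stokes setting, weak convergence alone is insufficient, so the compactness step must deliver strong $L^{2}([0,T],H)$ convergence. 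A secondary point requiring care is verifying that the pathwise construction yields an $\omega$-measurable, adapted process (rather than merely a solution for each frozen $\omega$), so that $u$ is a genuine solution of the stochastic equation. Once the splitting $u=v+z$ is in place, the cancellation and boundedness properties established earlier make the remaining estimates essentially routine.
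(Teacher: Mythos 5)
Your proposal is correct and follows essentially the same route as the paper's proof: the pathwise reduction via the Ornstein--Uhlenbeck decomposition $u=v+z$, Galerkin truncation, the energy estimate exploiting the cancellation \eqref{B_zero_property} together with Corollary \ref{B_boundedness_property} and Young's inequality, Gronwall, the Aubin--Lions compactness argument giving strong $L^{2}([0,T],H)$ convergence to pass to the limit in the nonlinearity, and finally continuous dependence on $\omega$ to obtain measurability and adaptedness. The only cosmetic difference is that you introduce $z$ at the infinite-dimensional level via Theorem \ref{linear_SODE_existence} before truncating, whereas the paper works with the projected process $z^{(m)}=\pi_{m}z$ from Lemma \ref{semigroup_lemma}; the estimates and conclusions are the same.
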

\begin{proof}The proof is presented in several steps.\\

\noindent \underline{\textbf{Step 1}}\\

\noindent We use a pathwise approach. Fix $u(0)\in H$ and $\omega \in C^{\alpha}([0,T]:H)$ for some $0\leq \alpha <\frac{1}{2}$. Then, we have the deterministic equation
\begin{align}\label{shell_model_integral_weak_form_pathwise_infinite_dimensional}
\langle u(t),\phi(t)\rangle_{H} + \nu \int_{0}^{t}\langle u(s),A(\phi(s))\rangle_{H}ds &= \langle u(0),\phi(t)\rangle_{H}+\int_{0}^{t}\langle B(u(s),\phi(s)),u(s)\rangle_{H}ds\nonumber
\\& + \langle \omega(t),\phi(t)\rangle_{H}
\end{align}
\noindent where $\phi \in D(A)$.\\

\noindent \underline{\textbf{Step 2}}\\

\noindent Let $m\in \mathbb{N}$ and let
\begin{equation}\nonumber
H_{m}=\{u\in H\ : \ u_{j}=0 \text{ for all } j>m\}
\end{equation}
\noindent and consider the finite dimensional orthogonal projections $\pi_{m}:H\rightarrow H_{m}$. Thus, for $u\in H$ we have $\pi_{m}(u) = (u_{1},u_{2},\ldots ,u_{m},0,0\ldots)$ and this is denoted by $u^{(m)}$. In particular we have that $$|u^{(m)}|_{H}=|\pi_{m}u|_{H}\leq|u|_{H}.$$
\noindent Now, we consider \eqref{shell_model_integral_weak_form_pathwise_infinite_dimensional} in $H_{m}$,
\begin{equation}\label{shell_model_pathwise_finite_dimensional}
u^{(m)}(t) + \nu \int_{0}^{t}A(u^{(m)}(s))ds = u^{(m)}(0) + \int_{0}^{t}\pi_{m}B(u^{(m)}(s),u^{(m)}(s))ds + \pi_{m}\omega(t).
\end{equation}

\noindent \underline{\textbf{Step 3}}\\

\noindent Since \eqref{shell_model_pathwise_finite_dimensional} is a finite dimensional system it admits a local solution in $H_{m}$. [Recall that Cauchy-Peano Theorem states that the ODE $u'(t)=f(t,u(t))$ where $f$ is continuous on $D$ with $f:D\subset \mathbb{R}\times\mathbb{R}\rightarrow \mathbb{R}$ and $u(t_{0})=u_{0}$ with $(t_{0},u_{0})\in D$ has a local solution in an interval containing $t_{0}$.] In particular, here we use the fact that the operator $B$ is locally Lipschitz. \\


\noindent \underline{\textbf{Step 4}}\\

\noindent In order to prove the global existence of \eqref{shell_model_pathwise_finite_dimensional} in $H_{m},$ we state the following Lemma.

\begin{lemma}\label{semigroup_lemma}
For the auxiliary linear equation in $H_{m}$
\begin{equation}\label{z^{(m)}}
z^{(m)}(t)+\int_{0}^{t}\nu Az^{(m)}(s)ds=\pi_{m}\omega(t)
\end{equation}
\noindent there exists a unique global continuous solution $z^{(m)}$ in $H_{m}$, given by
\begin{equation}\nonumber
z^{(m)}(t) = \pi_{m}z(t)
\end{equation}
\noindent where $z\in C([0,T]:H)$ is given by
\begin{equation}\label{z}
z(t) = S(t)\omega (t) - \int_{0}^{t}\nu AS(t-s)(\omega(s)-\omega(t))ds
\end{equation}
\noindent where $S(t)$ is the analytic semigroup in $H$ generated by $\nu A$.
\end{lemma}
\begin{proof}
\noindent For a proof of Lemma \eqref{semigroup_lemma}, we refer to \cite{DaPrato}.
\end{proof}

\noindent \underline{\textbf{Step 5}}\\

\noindent We show that \eqref{shell_model_pathwise_finite_dimensional} has a global solution in $H_{m}$. Let $z^{(m)}(t)$ be defined as in Lemma \eqref{semigroup_lemma} and define
\begin{equation}\nonumber
v^{(m)}(t):=u^{(m)}(t)-z^{(m)}(t),\ t\in[0,T].
\end{equation}
\noindent Then, using \eqref{shell_model_pathwise_finite_dimensional} and \eqref{z^{(m)}} we get
\begin{equation}\nonumber
v^{(m)}(t) + \nu \int_{0}^{t}A(v^{(m)}(s))ds = v^{(m)}(0) + \int_{0}^{t}\pi_{m}B(v^{(m)}(s)+z^{(m)}(s),v^{(m)}(s)+z^{(m)}(s))ds
\end{equation}
\noindent which is differentiable in $t$. Thus we have
\begin{equation}\label{ODE}
\frac{d}{dt}v^{(m)}(t) + \nu Av^{(m)}(t) = \pi_{m}B(v^{(m)}(t)+z^{(m)}(t),v^{(m)}(t)+z^{(m)}(t)).
\end{equation}
\noindent We take the inner product $\langle \cdot , \cdot \rangle_{H}$ of equation \eqref{ODE} with $v^{(m)}(t)$. We have
\begin{align}\label{ODE_inner_product}
\bigg\langle \frac{dv^{(m)}(t)}{dt},v^{(m)}(t)\bigg\rangle_{H}  + \langle \nu Av^{(m)}(t),v^{(m)}(t)\rangle_{H} &= \langle \pi_{m}B(v^{(m)}(t)\nonumber
\\& +z^{(m)}(t),v^{(m)}(t)+z^{(m)}(t)),v^{(m)}(t)\rangle_{H}.
\end{align}
\noindent Consider the terms
\begin{align}\label{inner_product_1_first_term}
\bigg\langle \frac{dv^{(m)}(t)}{dt},v^{(m)}(t)\bigg\rangle_{H} &= \sum_{j=1}^{\infty}(v^{(m)}(t)')_{j}(v^{(m)}(t))_{j}\nonumber
\\& = \frac{1}{2}\sum_{j=1}^{\infty}2(v^{(m)}(t)')_{j}(v^{(m)}(t))_{j}\nonumber
\\& = \frac{1}{2}\sum_{j=1}^{\infty}\frac{d}{dt}(v^{(m)}(t))_{j}^{2}\nonumber
\\& = \frac{1}{2}\frac{d}{dt}\sum_{j=1}^{\infty}(v^{(m)}(t))_{j}^{2}\nonumber
\\& = \frac{1}{2}\frac{d}{dt}|(v^{(m)})(t)|_{H}^{2},
\end{align}

\begin{align}\label{inner_product_1_second_term}
\langle \nu Av^{(m)}(t),v^{(m)}(t)\rangle_{H} &= \nu\sum_{j=1}^{\infty}(Av^{(m)}(t))_{j}(v^{(m)}(t))_{j}\nonumber
\\& = \nu\sum_{j=1}^{\infty}k_{j}^{2}(v^{(m)}(t))_{j}(v^{(m)}(t))_{j}\nonumber
\\& = \nu\sum_{j=1}^{\infty}k_{j}^{2}(v^{(m)}(t))_{j}^{2}\nonumber
\\& = \nu|v^{(m)}(t)|^{2}_{V}
\end{align}
\noindent and
\begin{align}\label{inner_product_1_third_term}
& \langle \pi_{m}B(v^{(m)}(t)+z^{(m)}(t),v^{(m)}(t)+z^{(m)}(t)),v^{(m)}(t)\rangle_{H}\nonumber
\\& = \langle [\pi_{m}B(v^{(m)}(t),v^{(m)}(t)+z^{(m)}(t))+\pi_{m}B(z^{(m)}(t),v^{(m)}(t)+z^{(m)}(t))],v^{(m)}(t)\rangle_{H} \nonumber
\\& = \langle [\pi_{m}B(v^{(m)}(t),v^{(m)}(t))+\pi_{m}B(v^{(m)}(t),z^{(m)}(t))+\pi_{m}B(z^{(m)}(t),v^{(m)}(t))\nonumber
\\& +\pi_{m}B(z^{(m)}(t),z^{(m)}(t))],v^{(m)}(t)\rangle_{H} \nonumber
\\& = \langle \pi_{m}B(v^{(m)}(t),z^{(m)}(t)),v^{(m)}(t)\rangle_{H}+\langle \pi_{m}B(z^{(m)}(t),z^{(m)}(t)),v^{(m)}(t)\rangle_{H}
\end{align}
\noindent since $\langle \pi_{m}B(v^{(m)}(t),v^{(m)}(t),v^{(m)}(t)\rangle_{H} = \langle \pi_{m}B(z^{(m)}(t),v^{(m)}(t),v^{(m)}(t)\rangle_{H} = 0.$\\

\noindent Consider the first term in \eqref{inner_product_1_third_term}. Then, using Young's inequality we have
\begin{align}\label{inner_product_1_third_term_part_1}
\langle \pi_{m}B(v^{(m)}(t),z^{(m)}(t)),v^{(m)}(t)\rangle_{H}
&= \sum_{j=1}^{\infty}(\pi_{m}B(v^{(m)}(t),z^{(m)}(t)))_{j}(v^{(m)}(t))_{j}\nonumber
\\&= \sum_{j=1}^{\infty}k_{j}^{-1}(\pi_{m}B(v^{(m)}(t),z^{(m)}(t)))_{j}k_{j}(v^{(m)}(t))_{j}\nonumber
\\&\leq \sum_{j=1}^{\infty}\bigg[\frac{1}{\nu/4}k_{j}^{-2}(\pi_{m}B(v^{(m)}(t),z^{(m)}(t)))_{j}^{2}+\frac{\nu}{4}k_{j}^{2}(v^{(m)}(t))_{j}^{2}\bigg]\nonumber
\\&=\frac{1}{\nu/4}\sum_{j=1}^{\infty}k_{j}^{-2}(\pi_{m}B(v^{(m)}(t),z^{(m)}(t)))_{j}^{2}+\frac{\nu}{4}\sum_{j=1}^{\infty}k_{j}^{2}(v^{(m)}(t))_{j}^{2}\nonumber
\\&=\frac{1}{\nu/4}|\pi_{m}B(v^{(m)}(t),z^{(m)}(t))|^{2}_{V'}+\frac{\nu}{4}|v^{(m)}(t)|^{2}_{V}\nonumber
\\&\leq\frac{1}{\nu/4}|B(v^{(m)}(t),z^{(m)}(t))|^{2}_{V'}+\frac{\nu}{4}|v^{(m)}(t)|^{2}_{V}\nonumber
\\&\leq C_{\nu}|v^{(m)}(t)|^{2}_{H}|z^{(m)}(t))|^{2}_{H}+\frac{\nu}{4}|v^{(m)}(t)|^{2}_{V}.
\end{align}
\noindent Again, using Young's inequality for the second term in \eqref{inner_product_1_third_term}
\begin{equation}\label{inner_product_1_third_term_part_2}
\langle \pi_{m}B(z^{(m)}(t),z^{(m)}(t)),v^{(m)}(t)\rangle_{H} \leq C_{\nu}|z^{(m)}(t)|^{4}_{H}+\frac{\nu}{4}|v^{(m)}(t)|^{2}_{V}.
\end{equation}
\noindent Thus from \eqref{inner_product_1_third_term_part_1} and \eqref{inner_product_1_third_term_part_2} we have the estimate for the non-linear term
\begin{equation}\label{inner_product_1_third_term_estimate}
\langle \pi_{m}B(v^{(m)}(t),z^{(m)}(t)),v^{(m)}(t)\rangle_{H} \leq C_{\nu}(|v^{(m)}(t)|^{2}_{H}|z^{(m)}(t))|^{2}_{H}+|z^{(m)}(t)|^{4}_{H})+\frac{\nu}{2}|v^{(m)}(t)|^{2}_{V}.
\end{equation}
\noindent From \eqref{ODE_inner_product}, \eqref{inner_product_1_first_term}, \eqref{inner_product_1_second_term} and \eqref{inner_product_1_third_term_estimate} we have
\begin{equation}\label{ODE_difference_bound_1}
\frac{1}{2}\frac{d}{dt}|(v^{(m)})(t)|_{H}^{2} + \nu|v^{(m)}(t)|^{2}_{V} \leq C_{\nu}(|v^{(m)}(t)|^{2}_{H}|z^{(m)}(t))|^{2}_{H}+|z^{(m)}(t)|^{4}_{H})+\frac{\nu}{2}|v^{(m)}(t)|^{2}_{V}.
\end{equation}
\begin{equation}\label{ODE_difference_bound}
\text{Thus }\frac{1}{2}\frac{d}{dt}|v^{(m)}(t)|_{H}^{2} + \frac{\nu}{2}|v^{(m)}(t)|^{2}_{V} \leq C_{\nu}(|v^{(m)}(t)|^{2}_{H}|z^{(m)}(t))|^{2}_{H}+|z^{(m)}(t)|^{4}_{H}).
\end{equation}
\noindent 
Integrating from $0$ to $t$, we obtain
\begin{equation}\nonumber
|v^{(m)}(t)|_{H}^{2} \leq |v^{(m)}(0)|_{H}^{2} + C_{\nu}\int_{0}^{t}|z^{(m)}(s)|^{4}_{H}ds + C_{\nu}\int_{0}^{t}|v^{(m)}(s)|^{2}_{H}|z^{(m)}(s)|^{2}_{H}ds.
\end{equation}
\noindent Applying the Gronwall Lemma we get
\begin{align}
& \sup_{0\leq t \leq T}|v^{(m)}(t)|^{2}_{H}\nonumber
\\ & \leq |v^{(m)}(0)|^{2}_{H}\exp\bigg(\int_{0}^{T}|z^{(m)}(s)|^{2}_{H}ds\bigg)+C_{\nu}\int_{0}^{T}|z^{(m)}(s)|^{4}_{H}\exp\bigg(\int_{s}^{T}|z^{(m)}(r)|^{2}_{H}dr\bigg)ds.\nonumber
\end{align}
\noindent Since $z^{(m)}(0)=0$, then $v^{(m)}(0)=\pi_{m}u(0)$. However, $z^{(m)}\in C([0,T]:H)$, thus, $\displaystyle\sup_{t\in[0,T]}|z^{(m)}(t)|^{2}_{H}$ depends only on $|\omega|_{C^{\alpha}([0,T]:H)}$. Thus we have
\begin{equation}\label{sup_1}
\sup_{t\in[0,T]}|v^{(m)}(t)|^{2}_{H}\leq C(\nu,T,|u(0)|_{H},|\omega|_{C^{\alpha}([0,T]:H)}).
\end{equation}
\noindent On the other hand $$|u^{(m)}(t)|_{H} \leq |v^{(m)}(t)|_{H} + |z^{(m)}(t)|_{H}.$$ Thus,
\begin{equation}\nonumber
\sup_{t\in[0,T]}|u^{(m)}(t)|^{2}_{H}\leq C(\nu,T,|u(0)|_{H},|\omega|_{C^{\alpha}([0,T]:H)}).
\end{equation}
\noindent This a priori estimate gives the existence of $u^{(m)}$ on $[0,T]$ for arbitrary $T\geq 0$. Thus, we have the global existence for finite dimensional approximation $u^{(m)}$ satisfying the equation \eqref{shell_model_pathwise_finite_dimensional}.\\

\noindent \underline{\textbf{Step 6}}\\

\noindent We now prove that there exists $u\in C([0,T]:H)$ solving the equation \eqref{shell_model_integral_weak_form_pathwise_infinite_dimensional}, provided $u(0)\in H$ and for $\omega\in C^{\alpha}([0,T]:H)$ for some $0\leq\alpha<\frac{1}{2}$. Integrating \eqref{ODE_difference_bound} from $0$ to $t$, we get
\begin{equation}\nonumber
\int_{0}^{t}|v^{(m)}(s)|^{2}_{V}ds \leq C_{\nu}\int_{0}^{t}(|v^{(m)}(s)|^{2}_{H}|z^{(m)}(s))|^{2}_{H}+|z^{(m)}(s)|^{4}_{H})ds.
\end{equation}
\noindent Now using the estimate \eqref{sup_1} we have
\begin{equation}\nonumber
\int_{0}^{t}|v^{(m)}(s)|^{2}_{V}ds \leq C_{\nu}\bigg(\sup_{t\in [0,T]}|v^{(m)}(t)|^{2}_{H}\sup_{t\in [0,T]}|z^{(m)}(t)|^{2}_{H}+\sup_{t\in [0,T]}|z^{(m)}(t)|^{4}_{H}\bigg)T.
\end{equation}
\noindent Hence,
\begin{equation}\label{difference_V_estimate}
\int_{0}^{T}|v^{(m)}(t)|^{2}_{V}dt\leq C(\nu,T,|u(0)|_{H},|\omega|_{C^{\alpha}([0,T]:H)}).
\end{equation}
\noindent Consider the equation \eqref{ODE}. We have
\begin{align}\label{difference_derivative_estimate}
\bigg|\frac{dv^{(m)}(t)}{dt}\bigg|_{V'}^{2}
&= \sum_{j=1}^{\infty}k_{j}^{-2}\bigg(\frac{dv^{(m)}(t)}{dt}\bigg)^{2}_{j}\nonumber
\\&= \sum_{j=1}^{\infty}k_{j}^{-2}(-\nu Av^{(m)}(t) + \pi_{m}B(v^{(m)}(t)+z^{(m)}(t),v^{(m)}(t)+z^{(m)}(t)))^{2}_{j}\nonumber
\\&= \sum_{j=1}^{\infty}k_{j}^{-2}(-\nu Av^{(m)}(t) + \pi_{m}B(u^{(m)}(t),u^{(m)}(t))^{2})_{j}\nonumber
\\&\leq C_{\nu}\sum_{j=1}^{\infty}k_{j}^{-2}(Av^{(m)}(t))^{2}_{j}+C\sum_{j=1}^{\infty}k_{j}^{-2}(\pi_{m}B(u^{(m)}(t),u^{(m)}(t)))^{2}_{j}\nonumber
\\&= C_{\nu}\sum_{j=1}^{\infty}k_{j}^{-2}k_{j}^{4}(v^{(m)}(t))^{2}_{j}+C|\pi_{m}B(u^{(m)},u^{(m)})|_{V'}^{2}\nonumber
\\&\leq C_{\nu}\sum_{j=1}^{\infty}k_{j}^{2}(v^{(m)}(t))^{2}_{j}+C|B(u^{(m)},u^{(m)})|_{V'}^{2}\nonumber
\\& \leq C_{\nu} |v^{(m)}|^{2}_{V}+C|u^{(m)}|^{4}_{H}.
\end{align}
\noindent From \eqref{difference_V_estimate} and \eqref{difference_derivative_estimate} we get
\begin{equation}\label{difference_W12_bound}\nonumber
|v^{(m)}|_{W^{1,2}(0,T;V')}\leq C(\nu,T,|u(0)|_{H},|\omega|_{C^{\alpha}([0,T]:H)}).
\end{equation}

\noindent Thus the sequence $\{v^{(m)}\}$ is bounded in $L^{2}(0,T:V)$ and in $W^{1,2}(0,T:V')$. Hence, we can use the compactness theorem \ref{Temam_theorem_3} (see remark \ref{Temam_compactness_theorems} and also see \cite{JLLions_1969}) and get a subsequence $\{v^{(m_{q})}\}$ which converges strongly to some $v$ in $L^{2}(0,T:H)$. Since $u^{(m)}(t)=v^{(m)}(t)+z^{(m)}(t)$ we have $\{u^{(m_{q})}\}$ converges strongly to some $u$ in the same topology. Thus, we can pass to the limit, and we have proved that a solution exists for equation \eqref{shell_model_integral_weak_form_pathwise_infinite_dimensional} for all $\phi\in D(A)$.

\noindent Since the sequence $\{v^{(m)}\}$ is bounded in $L^{2}(0,T:V)$ and in $W^{1,2}(0,T:V')$, it follows that $v$ is in $L^{2}(0,T:V)$ and $W^{1,2}(0,T:V')$, hence in $C([0,T],H)$ by Theorem \ref{Temam_theorem_1} (see remark \ref{Temam_compactness_theorems}). Thus, $u\in C([0,T],H)$.\\

\begin{remark}\label{Temam_compactness_theorems}
Recall the following compactness theorems.
\begin{theorem}\label{Temam_theorem_1}
Let $V,$ $H,$ $V'$ be three Hilbert spaces such that $V\subset H\subset V'$, $V'$ being the dual space of $V.$ If a function $u$ belongs to $L^{2}(0,T;V)$ and its derivative $u'$ belongs to $L^{2}(0,T;V')$, then $u$ is almost everywhere equal to a function in $C([0,T];H)$ and we have the following equality which holds in the scalar distribution sense on $(0,T)$: 
\begin{equation}\label{derivative_equality}
\frac{d}{dt}|u|^{2}_{H}=2\langle u',u\rangle_{H}.
\end{equation}
This equality \eqref{derivative_equality} is meaningful since the functions $$t\rightarrow |u(t)|^{2}_{H},\ t\rightarrow \langle u'(t),u(t)\rangle$$ are both integrable on $[0,T].$
\end{theorem}
\begin{proof}
See \cite{Temam_NSE_Numerical_Analysis}, page 260.
\end{proof}
\begin{theorem}\label{Temam_theorem_2}
Let $X$ and $Y$ be two Banach spaces such that $X\subset Y$ with a continuous injection. If a function $\phi\in L^{\infty}(0,T;X)$ and is weakly continuous with values in $Y$, then $\phi$ is weakly continuous with values in $X$. 
\end{theorem}
\begin{proof}
See \cite{Temam_NSE_Numerical_Analysis}, page 263.
\end{proof}

\noindent Consider the following assumptions. \\
\noindent A1. Let $X_{0}$, $X$, $X_{1}$ be three Banach spaces such that $ X_{0}\subset X \subset X_{1},$ where the injections are continuous and $X_{1} \text{ is reflexive } (i=0,1),$ $\text{ the injection } X_{0}\rightarrow X \text{ is compact. }$\\
\noindent A2. Let $T>0$ be a fixed finite number and let $\alpha_{0}$, $\alpha_{1}$ be two finite numbers such that $\alpha_{i}>1$, $(i=0,1)$. We consider the space
$$Y=Y(0,T;\alpha_{0}, \alpha_{1};X_{0}, X_{1})$$
$$Y=\displaystyle\{v\in L^{\alpha_{0}}(0,T;X_{0}),\ v'=\frac{dv}{dt}\in L^{\alpha_{1}}(0,T;X_{1})\displaystyle\}.$$
The space $Y$ is provided with the norm 
$$\|v\|_{Y}=\|v\|_{L^{\alpha_{0}}(0,T;X_{0})}+\|v\|_{L^{\alpha_{1}}(0,T;X_{1})},$$
which makes it a Banach space. It is evident that $$Y\subset L^{\alpha_{0}}(0,T;X),$$
which is a continuous injection. 
\begin{theorem}\label{Temam_theorem_3}
Under the above assumptions A1 and A2 the injection of $Y$ into $L^{\alpha_{0}}(0,T;X)$ is compact.
\end{theorem}
\begin{proof}
See \cite{Temam_NSE_Numerical_Analysis}, page 271.
\end{proof}
\end{remark}

\noindent \underline{\textbf{Step 7}}\\

\noindent In order to prove that there exists a solution to the stochastic equation \eqref{shell_model_differential_form_infinite_dimensional}, we will have to show that the solution of \eqref{shell_model_integral_weak_form_pathwise_infinite_dimensional} depends continuously on $\omega$.

\noindent Suppose $u(0)\in H$, $\omega^{(1)}, \omega^{(2)} \in C^{\alpha}([0,T]:H)$ for some $0\leq\alpha<\frac{1}{2}$. Let $u^{(1)}$ and $u^{(2)}$ be solutions of \eqref{shell_model_integral_weak_form_pathwise_infinite_dimensional} corresponding to $\omega^{(1)}$ and $\omega^{(2)}$ respectively. Let $z^{(1)}$ and $z^{(2)}$ be as in \eqref{z} corresponding to $\omega^{(1)}$ and $\omega^{(2)}$ respectively. We define
\begin{equation}\nonumber
v^{(1)}=u^{(1)}-z^{(1)},\ v^{(2)}=u^{(2)}-z^{(2)}\text{ and}
\end{equation}
\begin{equation}\nonumber
y=v^{(1)}-v^{(2)}=u^{(1)}-u^{(2)}-(z^{(1)}-z^{(2)}).
\end{equation}
\noindent We have
\begin{equation}\nonumber
\frac{1}{2}\frac{d}{dt}|y|^{2}_{H}+\nu|y|^{2}_{V}\leq \frac{\nu}{2}|y|^{2}_{V}+C_{\nu}|u^{(1)}|^{2}_{H}|y|^{2}_{H} +C_{\nu}(|u^{(1)}|_{H}+|u^{(2)}|_{H})^{2}|z^{(1)}-z^{(2)}|^{2}_{H}.
\end{equation}
\noindent Using the same techniques used for the existence, we have the following estimate,
\begin{align}
& \sup_{t\in[0,T]}|u^{(1)}(t)-u^{(2)}(t)|_{H}\nonumber
\\& \leq C\bigg(\nu,T,|u(0)|_{H},\int_{0}^{T}|u^{(1)}(r)|^{2}_{H}dr,\int_{0}^{T}|u^{(2)}(r)|^{2}_{H}dr\bigg)|\omega^{(1)}-\omega^{(2)}|_{C^{\alpha}([0,T]:H)}.\nonumber
\end{align}
\noindent Consider equation \eqref{shell_model_differential_form_infinite_dimensional}. For $P$-a.e. $\omega\in\Omega$ the function $t\mapsto W_{t}(\omega)$ is in $C^{\alpha}([0,T]:H)$ for $0\leq\alpha<\frac{1}{2}$. Let $\alpha\in[0,\frac{1}{2})$. Choose $\Omega=C^{\alpha}([0,T]:H)$ with Borel $\sigma$-algebra, filteration associated to the canonical process $t\mapsto \omega(t)$ and the measure $P$ given by the law of the previous Brownian motion $\{W(t)\}_{t\geq 0}$. Thus the canonical process $t\mapsto \omega(t)$ is a Brownian motion in $H$ with covariance $Q$ with all paths in $C^{\alpha}([0,T]:H)$. Suppose $u(0):\Omega\rightarrow H$ is a given $\mathcal{F}_{0}$-measurable random variable. Suppose $u(t,\omega)$ is a solution of equation \eqref{shell_model_integral_weak_form_pathwise_infinite_dimensional} for a given path $t\mapsto \omega(t)$ of the Brownian motion described above with the given initial condition. Since $u(\cdot,\omega)$ depends on $\omega$ continuously the function $(t,\omega)\mapsto u(t,\omega)$ is a $\mathcal{F}$-measurable stochastic process for all $t\geq0$. It is also adapted since the same argument is true for any interval $[0,t]$ with arbitrary $t>0$. Hence, given a $\mathcal{F}_{0}$-measurable random variable $u(0):\Omega\rightarrow H$, we have proved that there exists an adapted process $\{u(t)\}_{t\geq 0}$ that solves equation \eqref{shell_model_differential_form_infinite_dimensional}.

\end{proof}

\subsubsection{Uniqueness of the Solution}

\begin{theorem}\label{uniqueness_of_the_solution}
Suppose $(u_{(1)})_{t\geq 0}$ and $(u_{(2)})_{t\geq 0}$ are two solutions of \eqref{shell_model_differential_form_infinite_dimensional} in $H$ with the
associated initial conditions $u_{(1)}(0)$ and $u_{(2)}(0)$ respectively. Then there is a constant $C_{\nu}$ such that
\begin{equation}\nonumber
|u_{(1)}-u_{(2)}|^{2}_{H} \leq \bigg(\exp\bigg[C_{\nu}\int_{0}^{t}\bigg(|u_{(1)}|^{2}_{H} +
|u_{(2)}|^{2}_{H}\bigg)ds\bigg]\bigg)|u_{(1)}(0)-u_{(2)}(0)|^{2}_{H}\nonumber
\end{equation}
for all $t \geq 0$ with probability $1$. If $$u_{(1)}(0)=u_{(2)}(0)$$ then $$\mathcal{P}[u_{(1)} = u_{(2)} \text{ for all } t\geq 0] = 1.$$
\end{theorem}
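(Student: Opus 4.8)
The plan is to exploit the fact that both solutions are driven by the \emph{same} Brownian motion, so that the additive forcing cancels in the difference and the argument collapses to a purely pathwise (deterministic) energy estimate. First I would set $w := u_{(1)} - u_{(2)}$ and subtract the two copies of \eqref{shell_model_differential_form_infinite_dimensional}. Since the noise term $dW(t)$ is identical for both, it disappears, and $w$ satisfies, pathwise,
\begin{equation}\nonumber
\frac{d}{dt}w(t) + \nu A w(t) = B(u_{(1)},u_{(1)}) - B(u_{(2)},u_{(2)}).
\end{equation}
Using bilinearity of $B$ and adding and subtracting $B(u_{(1)},u_{(2)})$, I would rewrite the right-hand side as $B(u_{(1)},w) + B(w,u_{(2)})$.

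Next I would test this equation against $w$ in $H$. By Theorem \ref{Temam_theorem_1} (applicable because, exactly as in Step 6, $w\in L^{2}(0,T;V)$ with $w'\in L^{2}(0,T;V')$), the first term produces $\tfrac12\frac{d}{dt}|w|^{2}_{H}$ and the viscous term gives $\nu|w|^{2}_{V}$. The crucial simplification comes from the cancellation property \eqref{B_zero_property}: since $\langle B(u_{(1)},w),w\rangle_{H}=0$, only the term $\langle B(w,u_{(2)}),w\rangle_{H}$ survives. I would control it by reading it as the duality pairing $\langle B(w,u_{(2)}),w\rangle_{V'\times V}$ and applying item (3) of Corollary \ref{B_boundedness_property} followed by Young's inequality:
\begin{equation}\nonumber
|\langle B(w,u_{(2)}),w\rangle_{V'\times V}| \leq |B(w,u_{(2)})|_{V'}|w|_{V} \leq C|w|_{H}|u_{(2)}|_{H}|w|_{V} \leq \frac{\nu}{2}|w|^{2}_{V} + C_{\nu}|w|^{2}_{H}|u_{(2)}|^{2}_{H}.
\end{equation}
Absorbing the $\tfrac{\nu}{2}|w|^{2}_{V}$ term on the left and discarding the remaining nonnegative viscous contribution yields the differential inequality
\begin{equation}\nonumber
\frac{d}{dt}|w(t)|^{2}_{H} \leq C_{\nu}\bigl(|u_{(1)}(t)|^{2}_{H} + |u_{(2)}(t)|^{2}_{H}\bigr)|w(t)|^{2}_{H}.
\end{equation}

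Finally I would apply Gronwall's lemma, which directly produces the claimed bound with exponent $C_{\nu}\int_{0}^{t}(|u_{(1)}|^{2}_{H}+|u_{(2)}|^{2}_{H})\,ds$. The uniqueness conclusion is then immediate: if $u_{(1)}(0)=u_{(2)}(0)$ then $|w(0)|_{H}=0$, the right-hand side vanishes, and $|w(t)|_{H}=0$ for all $t$ with probability one.

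The step I expect to demand the most care is the rigorous justification that the energy identity $\frac{d}{dt}|w|^{2}_{H}=2\langle w',w\rangle$ holds pathwise. Because the noise cancels, $w$ carries no martingale part and one avoids invoking the It\^o formula entirely; nevertheless one must still verify the regularity $w\in L^{2}(0,T;V)$, $w'\in L^{2}(0,T;V')$ needed to apply Theorem \ref{Temam_theorem_1}, and confirm that $\int_{0}^{t}(|u_{(1)}|^{2}_{H}+|u_{(2)}|^{2}_{H})\,ds$ is finite almost surely. The latter holds since both solutions lie in $C([0,T];H)$ pathwise, so their $H$-norms are bounded on $[0,T]$, making the Gronwall exponent finite for $P$-a.e.\ $\omega$.
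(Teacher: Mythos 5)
Your proof is correct in substance and arrives at the same Gronwall inequality as the paper, but by a genuinely different route. The paper never uses the cancellation property \eqref{B_zero_property} in its uniqueness proof: it works throughout with the Galerkin projections $v^{(m)}=\pi_{m}(u_{(1)}-u_{(2)})$, for which exact cancellation is unavailable (the pairing there is $\langle B(u_{(1)},v),v^{(m)}\rangle$ with $v^{(m)}\neq v$), and instead estimates \emph{both} nonlinear terms $\langle \pi_{m}B(u_{(1)},v),v^{(m)}\rangle$ and $\langle\pi_{m}B(v,u_{(2)}),v^{(m)}\rangle$ via Corollary \ref{B_boundedness_property}(3) and Young's inequality, then integrates, lets $m\to\infty$, and applies Gronwall. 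You instead test the difference equation directly with $w$, use $\langle B(u_{(1)},w),w\rangle_{H}=0$ to kill one term, and estimate only the other; this is shorter and also explains why only $|u_{(2)}|_{H}^{2}$ is genuinely needed in the exponent (adding $|u_{(1)}|_{H}^{2}$ is harmless since it only enlarges the bound).

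The trade-off, which you correctly flag as the delicate point, is that your route needs $w\in L^{2}(0,T;V)$ with $w'\in L^{2}(0,T;V')$ in order to invoke Theorem \ref{Temam_theorem_1}, and needs $w(t)\in V$ for a.e.\ $t$ so that the cancellation identity applies. Your justification ``exactly as in Step 6'' is not quite adequate: Step 6 establishes this regularity for the solution \emph{constructed} there, whereas the theorem concerns two arbitrary solutions, for which only $C([0,T];H)$ is assumed. The gap is fixable without redoing the construction: since the noise cancels, $w$ solves the linear equation $w'+\nu Aw=f$ with $f=B(u_{(1)},u_{(1)})-B(u_{(2)},u_{(2)})\in L^{\infty}(0,T;V')$ (by Corollary \ref{B_boundedness_property}(3) and $u_{(i)}\in C([0,T];H)$), and linear parabolic maximal regularity for the analytic semigroup generated by $-\nu A$ then gives exactly $w\in L^{2}(0,T;V)$ and $w'\in L^{2}(0,T;V')$. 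Alternatively, you can sidestep the issue entirely by running your estimate on the projections $\pi_{m}w$ --- which is precisely what the paper's Galerkin device buys: it requires no regularity beyond $C([0,T];H)$, at the cost of having to estimate both nonlinear terms instead of one.
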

\begin{proof}

We will show the pathwise uniqueness. Let $m\in \mathbb{N}$.  Let $v=u_{(1)}-u_{(2)}$. Then
\begin{equation}\nonumber
v^{(m)} + \nu\int_{0}^{t}Av^{(m)}ds = v^{(m)}(0) + \int_{0}^{t}[\pi_{m}B(u^{(1)},v)-\pi_{m}B(v,u^{(2)})]ds
\end{equation}
\noindent Then $v^{(m)}$ is differentiable it $t$ and
\begin{equation}\nonumber
\frac{d}{dt}v^{(m)} + \nu Av^{(m)} = \pi_{m}B(u_{(1)},v) + \pi_{m}B(v,u_{(2)}).
\end{equation}
\noindent Now consider
\begin{equation}\nonumber
\langle v^{(m)'},v^{(m)}\rangle +\nu\langle Av^{(m)},v^{(m)}\rangle = \langle \pi_{m}B(u_{(1)},v),v^{(m)}\rangle + \langle \pi_{m}B(v,u_{(2)}),v^{(m)}\rangle
\end{equation}
\noindent Since
\begin{equation}\nonumber
\langle v^{(m)'},v^{(m)}\rangle = \frac{1}{2}\frac{d}{dt}|v^{(m)}|^{2}_{H},
\end{equation}
\begin{equation}\nonumber
\langle Av^{(m)},v^{(m)}\rangle = |v^{(m)}|^{2}_{V},
\end{equation}
\begin{equation}\nonumber
\langle \pi_{m}B(u_{(1)},v),v^{(m)}\rangle \leq \frac{\nu}{4}|v^{(m)}|^{2}_{V} + C_{\nu}|u_{(1)}|^{2}_{H}|v|^{2}_{H},
\end{equation}
\noindent and
\begin{equation}\nonumber
\langle \pi_{m}B(v,u_{(2)}),v^{(m)}\rangle \leq \frac{\nu}{4}|v^{(m)}|^{2}_{V} + C_{\nu}|u_{(2)}|^{2}_{H}|v|^{2}_{H},
\end{equation}
\noindent we get
\begin{equation}\nonumber
\frac{d}{dt}|v^{(m)}|^{2}_{H} \leq C_{\nu}\int_{0}^{t}\bigg(|u_{(1)}|_{H}^{2} + |u_{(2)}|_{H}^{2}\bigg)|v|_{H}^{2}ds.
\end{equation}
\noindent Thus
\begin{equation}\nonumber
|v^{(m)}|^{2}_{H} \leq |v^{(m)}(0)|^{2}_{H} +C_{\nu}\int_{0}^{t}\bigg(|u_{(1)}|_{H}^{2} + |u_{(2)}|_{H}^{2}\bigg)|v|_{H}^{2}ds.
\end{equation}
\noindent As $m\rightarrow \infty$ we get
\begin{equation}\nonumber
|v|^{2}_{H} \leq |v(0)|^{2}_{H} +C_{\nu}\int_{0}^{t}\bigg(|u_{(1)}|_{H}^{2} + |u_{(2)}|_{H}^{2}\bigg)|v|_{H}^{2}ds.
\end{equation}
\noindent Hence the result follows from Gronwall's Lemma.
\end{proof}
\subsection{An Estimate on the Expectation}
\noindent We will prove the following theorem in order to get an estimate on $\mathbb{E}|u(t)|_{H}^{2}$.
\begin{theorem}\label{estimate_for_expectation}
Let $u(0):\Omega \rightarrow H$ be a $\mathcal{F}_{0}$-measurable random variable and $(u(t))_{t\geq 0}$ be the unique continuous adapted process in $H$ solving the equation \eqref{shell_model_differential_form_infinite_dimensional}. If $\mathbb{E}|u(0)|_{H}^{2}<\infty$ then
\begin{equation}\nonumber
\mathbb{E}|u(t)|_{H}^{2} +2\nu\int_{0}^{t} \mathbb{E}|u(s)|^{2}_{V}ds=\mathbb{E}|u(0)|_{H}^{2}+t\cdot \text{Tr}Q \ \text{ for all } t\in [0,T]
\end{equation}
and
\begin{equation}\nonumber
\mathbb{E}\bigg[\sup_{t\in[0,T]}|u(t)|^{2}_{H}+\nu\int_{0}^{t}|u(s)|^{2}_{V}ds\bigg]\leq C(\mathbb{E}|u(0)|_{H}^{2}, T, \text{Tr}Q).
\end{equation}
\end{theorem}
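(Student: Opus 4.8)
The plan is to apply the Itô formula to the functional $u\mapsto|u|_H^2$ in the Gelfand triple $V\subset H\subset V'$ and to exploit the energy-conservation identity \eqref{B_zero_property}. The solution produced by Theorem \ref{existence_weak_solution} lies in $C([0,T];H)\cap L^2(0,T;V)$, and by Corollary \ref{B_boundedness_property}(3) we have $B(u,u)\in L^2(0,T;V')$, so that $-\nu Au+B(u,u)\in L^2(0,T;V')$ and the variational Itô formula for the square of the $H$-norm is applicable. Recalling from \eqref{inner_product_1_second_term} that $\langle Au,u\rangle_{V'\times V}=|u|_V^2$, and that the Itô correction produced by the $H$-valued $Q$-Wiener process equals $t\,\text{Tr}Q$, this yields the pathwise identity
\begin{equation}\label{path_ito}
|u(t)|_H^2 + 2\nu\int_0^t|u(s)|_V^2\,ds = |u(0)|_H^2 + 2\int_0^t\langle B(u(s),u(s)),u(s)\rangle_{V'\times V}\,ds + t\,\text{Tr}Q + 2\int_0^t\langle u(s),dW(s)\rangle_H .
\end{equation}
Since $u(s)\in V$ for a.e.\ $s$, the antisymmetry \eqref{B_zero_property} gives $\langle B(u,u),u\rangle_{V'\times V}=0$, so the nonlinear contribution in \eqref{path_ito} vanishes identically.

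Next I would localize in order to control the stochastic integral $M_t:=2\int_0^t\langle u(s),dW(s)\rangle_H$, whose quadratic variation is $4\int_0^t|Q^{1/2}u(s)|_H^2\,ds\le 4\,\text{Tr}Q\int_0^t|u(s)|_H^2\,ds$. Introducing the stopping times $\tau_N:=\inf\{t\ge0:|u(t)|_H\ge N\}\wedge T$, the stopped martingale $M_{t\wedge\tau_N}$ has a bounded integrand. Taking the supremum over $t\in[0,T]$ in the stopped version of \eqref{path_ito} and then expectations, I would bound the martingale term by the Burkholder--Davis--Gundy inequality,
\begin{equation}\nonumber
\mathbb{E}\sup_{t\le T}|M_{t\wedge\tau_N}| \le C\,\mathbb{E}\Big(\text{Tr}Q\int_0^{T\wedge\tau_N}|u(s)|_H^2\,ds\Big)^{1/2}\le C\sqrt{\text{Tr}Q}\,T^{1/2}\,\Big(\mathbb{E}\sup_{t\le T\wedge\tau_N}|u(t)|_H^2\Big)^{1/2},
\end{equation}
and then use Cauchy--Schwarz and Young's inequality to split off a term $\tfrac12\,\mathbb{E}\sup_{t\le T\wedge\tau_N}|u(t)|_H^2$, which is absorbed into the left-hand side. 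This produces a bound on $\mathbb{E}\big[\sup_{t\le T\wedge\tau_N}|u(t)|_H^2+\nu\int_0^{T\wedge\tau_N}|u(s)|_V^2\,ds\big]$ that is uniform in $N$ and depends only on $\mathbb{E}|u(0)|_H^2$, $T$ and $\text{Tr}Q$. Letting $N\to\infty$ (with $\tau_N\uparrow T$ by the a.s.\ continuity of $u$) and invoking monotone convergence establishes the second, supremum, estimate.

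With the finite bound $\mathbb{E}\sup_{t\le T}|u(t)|_H^2<\infty$ in hand, the first (balance) identity follows by passing to the limit in the localized version of \eqref{path_ito}. Since $M_{t\wedge\tau_N}$ is a genuine mean-zero martingale, taking expectations in the stopped identity removes the noise term and gives, for each $N$,
\begin{equation}\nonumber
\mathbb{E}|u(t\wedge\tau_N)|_H^2 + 2\nu\,\mathbb{E}\int_0^{t\wedge\tau_N}|u(s)|_V^2\,ds = \mathbb{E}|u(0)|_H^2 + \mathbb{E}(t\wedge\tau_N)\,\text{Tr}Q .
\end{equation}
Letting $N\to\infty$, the dissipation term converges by monotone convergence, the right-hand side by bounded convergence, and $\mathbb{E}|u(t\wedge\tau_N)|_H^2\to\mathbb{E}|u(t)|_H^2$ by dominated convergence with majorant $\sup_{t\le T}|u(t)|_H^2\in L^1$; this produces the claimed equality.

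The main obstacle is the rigorous justification of the infinite-dimensional Itô formula \eqref{path_ito} at the available regularity level $u\in L^2(0,T;V)$ with $u'\in L^2(0,T;V')$ plus the martingale part (the stochastic analogue of Theorem \ref{Temam_theorem_1}), together with ensuring that the localization is compatible with both the Burkholder--Davis--Gundy/Young absorption and the monotone and dominated passages to the limit. Everything else is a routine bookkeeping of the bounds once the nonlinear term is seen to drop out.
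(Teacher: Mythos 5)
Your proposal is correct and follows essentially the same route as the paper: the It\^{o} formula applied to the squared $H$-norm of the solution, cancellation of the nonlinear term via \eqref{B_zero_property}, stopping-time localization, the Burkholder--Davis--Gundy inequality with an absorption argument, and monotone/dominated convergence to remove the localization. The only notable differences are that you absorb the martingale bound directly by Young's inequality where the paper invokes Gronwall (the paper treats general $p\geq 2$, for which Gronwall is the natural closing step), and that you explicitly derive the energy-balance equality by taking expectations in the stopped identity and passing to the limit --- a step the paper's written proof, which concentrates on the supremum and $p$-moment estimates, does not actually carry out.
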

\noindent Further if $\mathbb{E}|u(0)|_{H}^{p}<\infty$ for $p\geq 2$ then
\begin{equation}\nonumber
\mathbb{E}\bigg[\sup_{t\in[0,T]}|u(t)|^{p}_{H}\bigg]\leq C(\mathbb{E}|u(0)|_{H}^{p}, T, \text{Tr}Q, p).
\end{equation}
and
\begin{equation}\nonumber
\frac{1}{T}\mathbb{E}\bigg[\int_{0}^{T}|u(s)|^{p}_{H}ds\bigg]\leq C(\text{Tr}Q, \nu, p, k_{0})\bigg[1+\frac{\mathbb{E}|u(0)|^{p}_{H}}{T}\bigg].
\end{equation}

\begin{proof}
Let $$\tau_{R}:=\inf\{t\geq 0 : |u(t)|^{2}_{H}\geq R\} \wedge T$$ be a stopping time. We have
\begin{align}
u(t\wedge \tau_{R})&=u(0)+\int_{0}^{t\wedge \tau_{R}}[-\nu Au(s)-B(u(s),u(s))]ds +W(t\wedge \tau_{R})\nonumber
\end{align}
From the It$\hat{o}$ formula, we get
\begin{align}
|u(t\wedge \tau_{R})|^{p}_{H}&=|u(0)|^{p}_{H}+\int_{0}^{t\wedge \tau_{R}}p\cdot \langle |u(s)|^{p-1}_{H},dW(s)\rangle_{H} \nonumber
\\& + \int_{0}^{t\wedge \tau_{R}}\langle  p|u(s)|^{p-1}_{H},[-\nu Au(s)-B(u(s),u(s))]\rangle_{H} ds \nonumber
\\& + \frac{1}{2}p(p-1)\text{Tr}(Q)\int_{0}^{t\wedge \tau_{R}}|u(s)|^{p-2}_{H} ds \nonumber
\end{align}
Thus,
\begin{align}
& \sup_{t\in[0,r]}|u(t\wedge \tau_{R})|^{p}_{H}+p\nu\int_{0}^{r\wedge \tau_{R}}|u(s)|^{p-2}_{H}\langle Au(s),u(s)\rangle_{H}  ds\nonumber
\\ &\leq 2|u(0)|^{p}_{H}+2p\nu\int_{0}^{r\wedge \tau_{R}}|u(s)|^{p-2}_{H} \langle B(u(s),u(s)),u(s) \rangle_{H}  ds\nonumber
\\ & + 2\sup_{t\in[0,r]}|M_{t}^{(p)}|+p(p-1)\text{Tr}(Q)\int_{0}^{r\wedge \tau_{R}}|u(s)|^{p-2}_{H} ds \nonumber
\end{align}
where $$ M_{t}^{(p)}=p\int_{0}^{t\wedge \tau_{R}}|u(s)|^{p-2}_{H}\langle u(s),dW(s)\rangle_{H}ds.$$
Using the Burkholder-Davis-Gundy inequality, we see that
\begin{align}
\mathbb{E}\sup_{t\in[0,r]}|M_{t}^{(p)}|&\leq C(p,\text{Tr}Q)\mathbb{E}\bigg[\bigg( \int_{0}^{r\wedge \tau_{R}}|u(s)|^{2p-2}_{H} ds\bigg)^{1/2}\bigg]\nonumber
\\ & \leq \frac{1}{4}\mathbb{E}\bigg[ \sup_{t\in[0,r]}|u(t\wedge \tau_{R})|^{p}_{H} \bigg] +C'(p,\text{Tr}Q)\mathbb{E}\int_{0}^{r\wedge \tau_{R}}|u(s)|^{p-2}_{H} ds.\nonumber
\end{align}
Thus,
\begin{align}
& \frac{1}{2}\mathbb{E}\sup_{t\in[0,r]}|u(t\wedge \tau_{R})|^{p}_{H} + p\nu \mathbb{E}\int_{0}^{r\wedge \tau_{R}}|u(s)|^{p-2}_{H}\langle Au(s),u(s)\rangle_{H} ds\nonumber
\\ &\leq 2\mathbb{E}|u(0)|^{p}_{H} + 2p\nu \mathbb{E} \int_{0}^{r\wedge \tau_{R}}|u(s)|^{p-2}_{H} \langle B(u(s),u(s)),u(s) \rangle_{H} ds\nonumber
\\ &+ 2C'(p,\text{Tr}Q)\mathbb{E}\int_{0}^{r\wedge \tau_{R}}|u(s)|^{p-2}_{H} ds + p(p-1)\text{Tr}(Q)\int_{0}^{r\wedge \tau_{R}}\mathbb{E}|u(s)|^{p-2}_{H} ds.\nonumber
\end{align}
That is,
\begin{align}\nonumber
\frac{1}{2}\mathbb{E}\sup_{t\in[0,r]}|u(t\wedge \tau_{R})|^{p}_{H}\leq \mathbb{E}|u(0)|^{p}_{H}+1+C''(p,\text{Tr}Q)\mathbb{E}\int_{0}^{r\wedge \tau_{R}} \sup_{t\in[0,r]}|u(t\wedge \tau_{R})|^{p}_{H}ds.
\end{align}
By applying the Gronwall Lemma we get
\begin{equation}\nonumber
\mathbb{E}\sup_{t\in[0,r]}|u(t\wedge \tau_{R})|^{p}_{H}\leq C''(p,\text{Tr}Q, \mathbb{E}|u(0)|^{p}_{H}).
\end{equation}
By applying monotone convergence theorem we obtain
\begin{equation}\nonumber
\mathbb{E}\sup_{t\in[0,r]}|u(t)|^{p}_{H}\leq C''(p,\text{Tr}Q, \mathbb{E}|u(0)|^{p}_{H}).
\end{equation}
Since
\begin{equation}\nonumber
k_{0}p\nu \mathbb{E}\int_{0}^{r\wedge \tau_{R}}|u(s)|^{p}_{H}ds\leq 2\mathbb{E}|u(0)|^{p}_{H}+2C''(p,\text{Tr}Q)\mathbb{E}\int_{0}^{r\wedge \tau_{R}}|u(s)|^{p-2}_{H}ds
\end{equation}
we have
\begin{equation}\nonumber
\frac{1}{2}k_{0}p\nu \mathbb{E}\int_{0}^{r\wedge \tau_{R}}|u(s)|^{p}_{H}ds\leq 2\mathbb{E}|u(0)|^{p}_{H}+rC(p,\text{Tr}Q,\nu,k_{0}).
\end{equation}
\end{proof}

\subsection[Regularity of Stochastic Shell Models of Turbulence] {Regularity of Stochastic Shell Models of Turbulence}

\subsubsection{Existence of Strong Solution}
\noindent We proved above that $u\in C([0,T],H)$ when the initial condition $u(0)\in C([0,T],H)$. Analogous to the work related to Navier-Stokes equations we will show that \eqref{shell_model_differential_form_infinite_dimensional} possesses more regular solutions as stated in the following theorem.
\begin{theorem}
Let $\{W(t)\}_{t\geq 0}$ be an $H$-valued Brownian motion defined on a filtered probability space $(\Omega, \mathcal{F}, (\mathcal{F})_{t\geq 0},P)$ with nuclear covariance operator $Q$ and let $T>0$.  We are given $u(0)$, a $\mathcal{F}_{0}$-measurable random variable as the initial condition. Then the unique solution of \eqref{shell_model_differential_form_infinite_dimensional} satisfies
\begin{equation}\nonumber
u\in L_{loc}^{\infty}((0,T],V)\cap L_{loc}^{2}((0,T],D(A))\cap L^{\infty}([0,T],H)\cap L^{2}([0,T],V)
\end{equation}
if the initial condition $u(0)\in H.$ The solution satisfies
\begin{equation}\nonumber
u \in C([0,T],V)\cap L^{2}([0,T],D(A))
\end{equation}
if the initial condition $u(0)\in V.$
\end{theorem}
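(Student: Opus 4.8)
\emph{The plan is to} run a pathwise, higher-order energy argument on the finite-dimensional Galerkin system of the proof of Theorem \ref{existence_weak_solution} and then pass to the limit. Fix a path $\omega\in C^{\alpha}([0,T],H)$, let $z$ be the associated Ornstein--Uhlenbeck process from Lemma \ref{semigroup_lemma}, and set $v=u-z$, so that $v$ solves the random but \emph{differentiable} equation $v'+\nu Av=B(v+z,v+z)=B(u,u)$ with $v(0)=u(0)$ (recall $z(0)=0$). Because the noise acts only through the first mode, $z(t)$ is a scalar multiple of the first basis vector and hence lies in $C([0,T],D(A^{s}))$ for every $s$; in particular $z\in C([0,T],V)\cap L^{2}([0,T],D(A))$, so $z$ never obstructs the regularity we are after. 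I would also record at the outset the two global pathwise bounds already established in Step 5 and Step 6, namely $u\in L^{\infty}([0,T],H)\cap L^{2}([0,T],V)$, which already furnish the last two spaces in the first assertion.

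\emph{The heart of the matter} is a $V$-level energy estimate. Taking the $H$-inner product of the equation for $v^{(m)}$ with $Av^{(m)}$ and using $\langle v^{(m)\prime},Av^{(m)}\rangle_{H}=\tfrac12\tfrac{d}{dt}|v^{(m)}|_{V}^{2}$ and $\langle Av^{(m)},Av^{(m)}\rangle_{H}=|v^{(m)}|_{D(A)}^{2}$ (the projection $\pi_{m}$ may be dropped since $A$ and $v^{(m)}$ preserve $H_{m}$), the only term to control is the nonlinearity. Here I would invoke Corollary \ref{B_boundedness_property} in the form $|B(u,u)|_{H}\le C|u|_{V}|u|_{H}$, then Cauchy--Schwarz and Young's inequality, to obtain
\begin{equation}\nonumber
\langle B(u^{(m)},u^{(m)}),Av^{(m)}\rangle_{H}\le \tfrac{\nu}{2}|v^{(m)}|_{D(A)}^{2}+\frac{C^{2}}{2\nu}|u^{(m)}|_{V}^{2}\,|u^{(m)}|_{H}^{2},
\end{equation}
which yields the differential inequality $\tfrac{d}{dt}|v^{(m)}|_{V}^{2}+\nu|v^{(m)}|_{D(A)}^{2}\le C_{\nu}|u^{(m)}|_{V}^{2}|u^{(m)}|_{H}^{2}$. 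All constants are independent of $m$, so the resulting bounds survive the passage to the limit by weak lower semicontinuity of the norms.

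\emph{For the case $u(0)\in H$}, the right-hand side is only integrable (not bounded) near $t=0$, so I would multiply through by the weight $t$, use $\tfrac{d}{dt}(t|v|_{V}^{2})=|v|_{V}^{2}+t\tfrac{d}{dt}|v|_{V}^{2}$, and integrate. Writing $|u|_{V}^{2}\le 2|v|_{V}^{2}+2|z|_{V}^{2}$ and bounding $|u|_{H}^{2}$ by its $L^{\infty}$ norm turns the nonlinear contribution into $C_{\nu}\|u\|_{L^{\infty}([0,T],H)}^{2}\int_{0}^{t}s\,|v|_{V}^{2}\,ds$ plus an explicitly integrable $z$-term and the integrable term $\int_{0}^{t}|v|_{V}^{2}\,ds$; Gronwall's Lemma applied to $t\mapsto t|v(t)|_{V}^{2}$ then gives $t|v(t)|_{V}^{2}+\nu\int_{0}^{t}s\,|v|_{D(A)}^{2}\,ds\le C$. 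This is exactly $v\in L^{\infty}_{loc}((0,T],V)\cap L^{2}_{loc}((0,T],D(A))$, and adding $z$ gives the same for $u$.

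\emph{For the case $u(0)\in V$} the weight is unnecessary, since $v(0)=u(0)\in V$; integrating the same differential inequality and applying Gronwall directly yields $v\in L^{\infty}([0,T],V)\cap L^{2}([0,T],D(A))$. To upgrade $L^{\infty}$ to $C$, I would note that from $v'=-\nu Av+B(u,u)$ together with $v\in L^{2}([0,T],D(A))$ and $u\in L^{\infty}([0,T],H)\cap L^{2}([0,T],V)$ one gets $v'\in L^{2}([0,T],H)$; then Theorem \ref{Temam_theorem_1}, applied to the triple $D(A)\subset V\subset H$, gives $v\in C([0,T],V)$, whence $u=v+z\in C([0,T],V)\cap L^{2}([0,T],D(A))$. \emph{The main obstacle} is controlling the nonlinear term at the $D(A)$ level — it is precisely the boundedness properties of $B$ in Corollary \ref{B_boundedness_property} that make the Young-inequality absorption into $\nu|v|_{D(A)}^{2}$ possible — together with the singularity at $t=0$ in the $H$-data case, which the weight $t$ is tailored to absorb.
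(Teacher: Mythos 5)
Your skeleton is essentially the paper's: the pathwise Ornstein--Uhlenbeck decomposition $v=u-z$, Galerkin projections, the $H$-inner product of the $v^{(m)}$-equation with $Av^{(m)}$, Young absorption into $\tfrac{\nu}{2}|Av^{(m)}|_{H}^{2}$, and a Gronwall argument, with a separate device to handle $u(0)\in H$. Where you differ, it is mostly to your credit: you invoke the estimate $|B(u,u)|_{H}\le C|u|_{V}|u|_{H}$, which genuinely follows from Corollary \ref{B_boundedness_property} (the paper instead writes $|B(w,w)|_{H}\le C|w|_{H}|w|_{H}$, which does not follow from that corollary); for $u(0)\in H$ you use the classical time weight $t|v|_{V}^{2}$ and Gronwall, whereas the paper integrates the energy inequality over $[t-\tau,t]$, extracts a good $t_{0}$ by the mean value theorem, and covers $(0,T]$ by dyadic intervals to reach the same bound $t|v(t)|_{V}^{2}\le C$; and you actually argue the continuity $u\in C([0,T],V)$ via $v'\in L^{2}(0,T;H)$ and Theorem \ref{Temam_theorem_1} applied to the triple $D(A)\subset V\subset H$, a point the paper leaves essentially unaddressed.

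However, there is one genuine gap, and it is exactly the price of your (otherwise better) nonlinear estimate. You justify the regularity of $z$ by asserting that ``the noise acts only through the first mode,'' so that $z(t)$ is a multiple of the first basis vector and lies in $C([0,T],D(A^{s}))$ for all $s$. That hypothesis ($\sigma_{n}=0$ for $n\ge 2$) is introduced only in Section 4 for the K41 results; the theorem you are proving assumes an $H$-valued Brownian motion with a general nuclear covariance $Q$. This matters because your right-hand side $C_{\nu}|u|_{V}^{2}|u|_{H}^{2}$, with $|u|_{V}\le|v|_{V}+|z|_{V}$, forces the terms $\int_{0}^{T}s\,|z(s)|_{V}^{2}\,ds$ (in the weighted Gronwall) and $z\in L^{\infty}_{loc}((0,T],V)\cap L^{2}(0,T;V)$ (to pass from $v$ back to $u=v+z$, and already in your preliminary claim that Steps 5--6 give $u\in L^{2}([0,T],V)$) into the argument. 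The pathwise framework you inherit from Theorem \ref{existence_weak_solution} supplies only $\omega\in C^{\alpha}([0,T],H)$ with $\alpha<\tfrac12$, and Lemma \ref{semigroup_lemma} gives only $z\in C([0,T],H)$; from the representation \eqref{z} and analyticity of the semigroup one obtains pathwise $z(t)\in D(A^{\theta})$ only for $\theta<\alpha<\tfrac12$, which falls just short of $V=D(A^{1/2})$. The regularity you need is in fact true, but only almost surely and by a stochastic, not pathwise, argument: by the It\^{o} isometry and analyticity, $\mathbb{E}\int_{0}^{T}|z(t)|_{V}^{2}\,dt\le T\,\mathrm{Tr}(Q)/(2\nu)<\infty$ for trace-class $Q$. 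You would need to state and prove this, and rework your argument so it runs almost surely rather than for every $C^{\alpha}$ path, before your estimate closes; as written, the step resting on the first-mode claim fails under the theorem's hypotheses.
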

\begin{proof}
\noindent To be rigorous we will perform our computations using a Galerkin approximation. We take the inner product $\langle \cdot , \cdot \rangle_{H}$ of equation \eqref{ODE} with $Av^{(m)}(t)$. We have
\begin{align}\label{ODE_inner_product_with_Av}
& \bigg\langle \frac{dv^{(m)}(t)}{dt},Av^{(m)}(t)\bigg\rangle_{H}  + \langle \nu Av^{(m)}(t),Av^{(m)}(t)\rangle_{H}\nonumber
\\& = \langle \pi_{m}B(v^{(m)}(t)+z^{(m)}(t),v^{(m)}(t)+z^{(m)}(t)),Av^{(m)}(t)\rangle_{H}.\nonumber
\end{align}

\noindent Consider the terms

\begin{align}
\bigg\langle \frac{dv^{(m)}(t)}{dt},Av^{(m)}(t)\bigg\rangle_{H}
&= \sum_{j=1}^{\infty}(v^{(m)}(t)')_{j}(Av^{(m)}(t))_{j}\nonumber
\\& = \frac{1}{2}\sum_{j=1}^{\infty}2(v^{(m)}(t)')_{j}k_{j}^{2}(v^{(m)}(t))_{j}\nonumber
\\& = \frac{1}{2}\sum_{j=1}^{\infty}\frac{d}{dt}k_{j}^{2}(v^{(m)}(t))_{j}^{2}\nonumber
\\& = \frac{1}{2}\frac{d}{dt}\sum_{j=1}^{\infty}k_{j}^{2}(v^{(m)}(t))_{j}^{2}\nonumber
\\& = \frac{1}{2}\frac{d}{dt}|(v^{(m)})(t)|_{V}^{2},\nonumber
\end{align}
\noindent and
\begin{align}
& \langle \pi_{m}B(v^{(m)}(t)+z^{(m)}(t),v^{(m)}(t)+z^{(m)}(t)),Av^{(m)}(t)\rangle_{H}\nonumber
\\&\leq |\pi_{m}B(v^{(m)}(t)+z^{(m)}(t),v^{(m)}(t)+z^{(m)}(t))|_{H}|Av^{(m)}(t)|_{H}\nonumber
\\& \leq C |v^{(m)}(t)+z^{(m)}(t)|_{H}|v^{(m)}(t)+z^{(m)}(t)|_{H}|Av^{(m)}(t)|_{H}\nonumber
\\& \leq \frac{C^{2}}{2\nu}|v^{(m)}(t)+z^{(m)}(t)|_{H}^{4}+\frac{\nu}{2}|Av^{(m)}(t)|_{H}^{2}.\nonumber
\end{align}
\noindent We have
\begin{equation}\nonumber
\frac{1}{2}\frac{d}{dt}|v^{(m)}(t)|_{V}^{2}+\frac{\nu}{2}|Av^{(m)}(t)|_{H}^{2} \leq \frac{C^{2}}{2\nu}|v^{(m)}(t)+z^{(m)}(t)|_{H}^{4}.
\end{equation}
\noindent Observe that
\begin{align}
& |v^{(m)}(t)+z^{(m)}(t)|_{H}^{4}\nonumber
\\& \leq |v^{(m)}(t)|_{H}^{4} + 4|v^{(m)}(t)|_{H}^{3}|z^{(m)}(t)|_{H} + 6|v^{(m)}(t)|_{H}^{2}||z^{(m)}(t)|_{H}^{2} + 4|v^{(m)}(t)|_{H}|z^{(m)}(t)|_{H}^{3}\nonumber
\\& \ \ \ \ + |z^{(m)}(t)|_{H}^{4}\nonumber
\\& \leq |v^{(m)}(t)|_{H}^{2}(|v^{(m)}(t)|_{H}^{2} + 4|v^{(m)}(t)|_{H}|z^{(m)}(t)|_{H} + 6|z^{(m)}(t)|_{H}^{2}) + 4|v^{(m)}(t)|_{H}|z^{(m)}(t)|_{H}^{3}\nonumber
\\& \ \ \ \ + |z^{(m)}(t)|_{H}^{4}\nonumber
\\& \leq |v^{(m)}(t)|_{H}^{2}(|v^{(m)}(t)|_{H}^{2} + 4(\frac{1}{2}|v^{(m)}(t)|_{H}^{2} + \frac{1}{2}|z^{(m)}(t)|_{H}^{2}) + 6|z^{(m)}(t)|_{H}^{2})\nonumber
\\& \ \ \ \ + 4(\frac{1}{2}|v^{(m)}(t)|_{H}^{2} + \frac{1}{2}|z^{(m)}(t)|_{H}^{2})|z^{(m)}(t)|_{H}^{2} + |z^{(m)}(t)|_{H}^{4}\nonumber
\\& \leq |v^{(m)}(t)|_{H}^{2}(3|v^{(m)}(t)|_{H}^{2} + 10|z^{(m)}(t)|_{H}^{2}) + 3|z^{(m)}(t)|_{H}^{4}\nonumber
\\& \leq k_{1}^{-2}|v^{(m)}(t)|_{V}^{2}(3|v^{(m)}(t)|_{H}^{2} + 10|z^{(m)}(t)|_{H}^{2}) + 3|z^{(m)}(t)|_{H}^{4}\nonumber
\end{align}
\noindent since
\begin{align}
& k_{1}^{2}|v^{(m)}(t)|_{H}^{2} = k_{1}^{2}\sum_{j=1}^{\infty}(v^{(m)}(t))^{2}_{j} = k_{1}^{2}(v^{(m)}(t))^{2}_{1}+k_{1}^{2}(v^{(m)}(t))^{2}_{2}+k_{1}^{2}(v^{(m)}(t))^{2}_{3}+\ldots \nonumber
\\& \leq k_{1}^{2}(v^{(m)}(t))^{2}_{1}+k_{2}^{2}(v^{(m)}(t))^{2}_{2}+k_{3}^{2}(v^{(m)}(t))^{2}_{3}+\ldots = \sum_{j=1}^{\infty}k_{j}^{2}(v^{(m)}(t))^{2}_{j} = |v^{(m)}(t)|_{V}^{2}.\nonumber
\end{align}
\noindent Thus, we have
\begin{equation}\nonumber
\frac{1}{2}\frac{d}{dt}|v^{(m)}(t)|_{V}^{2}+\frac{\nu}{2}|Av^{(m)}(t)|_{H}^{2} \leq \frac{C^{2}}{2k_{1}^{2}\nu}(3|v^{(m)}(t)|_{H}^{2} + 10|z^{(m)}(t)|_{H}^{2})|v^{(m)}(t)|_{V}^{2} + \frac{3C^{2}}{2k_{1}^{2}\nu}|z^{(m)}(t)|_{H}^{4}.
\end{equation}
\noindent Thus,
\begin{equation}\nonumber
\frac{d}{dt}|v^{(m)}(t)|_{V}^{2} - \frac{C^{2}}{2k_{1}^{2}\nu}(3|v^{(m)}(t)|_{H}^{2}+10|z^{(m)}(t)|_{H}^{2})|v^{(m)}(t)|_{V}^{2}
 \leq  \frac{3C^{2}}{2k_{1}^{2}\nu}|z^{(m)}(t)|_{H}^{4}.
\end{equation}
\noindent Let $0\leq t_{0}\leq t$. Then
\begin{align}
& \frac{d}{dt}|v^{(m)}(t)|_{V}^{2} e^{\Big(-\frac{C^{2}}{2k_{1}^{2}\nu}\int_{t_{0}}^{t}(3|v^{(m)}(t)|_{H}^{2}+10|z^{(m)}(t)|_{H}^{2})ds\Big)} \nonumber
\\& \ -\frac{C^{2}}{2k_{1}^{2}\nu} |v^{(m)}(t)|_{V}^{2}(3|v^{(m)}(t)|_{H}^{2}+10|z^{(m)}(t)|_{H}^{2})e^{\Big(-\frac{C^{2}}{2k_{1}^{2}\nu}\int_{t_{0}}^{t}(3|v^{(m)}(t)|_{H}^{2}+10|z^{(m)}(t)|_{H}^{2})ds\Big)}\nonumber
\\& \leq \frac{3C^{2}}{2k_{1}^{2}\nu}|z^{(m)}(t)|_{H}^{4} e^{\Big(-\frac{C^{2}}{2k_{1}^{2}\nu}\int_{t_{0}}^{t}(3|v^{(m)}(t)|_{H}^{2}+10|z^{(m)}(t)|_{H}^{2})ds\Big)}.\nonumber
\end{align}
\noindent Since $e^{\Big(-\frac{C^{2}}{2k_{1}^{2}\nu}\int_{t_{0}}^{t}(3|v^{(m)}(t)|_{H}^{2}+10|z^{(m)}(t)|_{H}^{2})ds\Big)}\leq 1,$
\begin{equation}\nonumber
\frac{d}{dt}\bigg[|v^{(m)}(t)|_{V}^{2}e^{\Big(-\frac{C^{2}}{2k_{1}^{2}\nu}\int_{t_{0}}^{t}(3|v^{(m)}(t)|_{H}^{2}+10|z^{(m)}(t)|_{H}^{2})ds\Big)}\bigg]\leq \frac{3C^{2}}{2k_{1}^{2}\nu}|z^{(m)}(t)|_{H}^{4}.
\end{equation}
\noindent Since $z\in C([0,T]:H)$,
\begin{equation}\nonumber
|z^{(m)}(t)|_{H} \leq \sup_{0\leq t \leq T}|z^{(m)}(t)|_{H} = |z^{(m)}(t)|_{L^{\infty}([0,T]:H)}.
\end{equation}
\noindent Thus, by integrating from $t_{0}$ to $t$;
\begin{align}
|v^{(m)}(t)|_{V}^{2} e^{\Big(-\frac{C^{2}}{2k_{1}^{2}\nu}\int_{t_{0}}^{t}(3|v^{(m)}(t)|_{H}^{2}+10|z^{(m)}(t)|_{H}^{2})ds\Big)} - |v^{(m)}(t_{0})|_{V}^{2} \leq \frac{3C^{2}}{2k_{1}^{2}\nu}|z^{(m)}|^{4}_{L^{\infty}([0,T]:H)}(t-t_{0})\nonumber
\end{align}
\begin{align}
|v^{(m)}(t)|_{V}^{2} \leq e^{\Big(\frac{C^{2}}{2k_{1}^{2}\nu}\int_{t_{0}}^{t}(3|v^{(m)}(t)|_{H}^{2}+10|z^{(m)}(t)|_{H}^{2})ds\Big)}\bigg[|v^{(m)}(t_{0})|_{V}^{2} + \frac{3C^{2}}{2k_{1}^{2}\nu}|z^{(m)}(t)|^{4}_{L^{\infty}([0,T]:H)}(t-t_{0})\bigg].\nonumber
\end{align}
\noindent In order to get an estimate uniform in $m$ we will estimate the two terms $|v^{(m)}(t_{0})|_{V}^{2}$ and $|v^{(m)}(s)|_{H}^{2}$.
\noindent Following \eqref{inner_product_1_third_term_part_1} and \eqref{ODE_difference_bound_1} we have
\begin{equation}\nonumber
\frac{1}{2}\frac{d}{dt}|(v^{(m)})(t)|_{H}^{2} + \nu|v^{(m)}(t)|^{2}_{V} \leq \frac{C}{\nu}(|v^{(m)}(t)|^{2}_{H}|z^{(m)}(t)|^{2}_{H}+|z^{(m)}(t)|^{4}_{H})+\frac{\nu}{2}|v^{(m)}(t)|^{2}_{V}.
\end{equation}
\begin{equation}\nonumber
\frac{d}{dt}|(v^{(m)})(t)|_{H}^{2}  \leq \frac{C}{\nu}|v^{(m)}(t)|^{2}_{H}|z^{(m)}|^{2}_{L^{\infty}([0,T]:H)}+\frac{C}{\nu}|z^{(m)}|^{4}_{L^{\infty}([0,T]:H)}.
\end{equation}
\noindent Similar to above calculation we get integrating from $0$ to $t$
\begin{equation}\label{|v^{(m)}(t)|^{2}_{H}_bound}
|v^{(m)}(t)|^{2}_{H} \leq e^{\Big(\frac{C}{\nu}t|z^{(m)}|^{2}_{L^{\infty}([0,T]:H)}\Big)}(|v^{(m)}(0)|^{2}_{H} + |z^{(m)}|^{2}_{L^{\infty}([0,T]:H)}).
\end{equation}
\noindent Thus,
\begin{align}
& |v^{(m)}(t)|_{V}^{2} \leq e^{\Big(\frac{C^{2}}{2k_{1}^{2}\nu}\int_{t_{0}}^{t}(3(e^{\Big(\frac{C}{\nu}t|z^{(m)}|^{2}_{L^{\infty}([0,T]:H)}\Big)}(|v^{(m)}(0)|^{2}_{H} + |z^{(m)}|^{2}_{L^{\infty}([0,T]:H)}))+10|z^{(m)}(t)|_{H}^{2})ds\Big)}\nonumber
\\& \ \ \ \times \bigg[|v^{(m)}(t_{0})|_{V}^{2} + \frac{3C^{2}}{2k_{1}^{2}\nu}|z^{(m)}(t)|^{4}_{L^{\infty}([0,T]:H)}(t-t_{0})\bigg].\nonumber
\end{align}
\noindent Now consider \eqref{ODE_difference_bound_1};
\begin{equation}\nonumber
\frac{1}{2}\frac{d}{dt}|(v^{(m)})(t)|_{H}^{2} + \nu|v^{(m)}(t)|^{2}_{V} \leq \frac{C}{\nu}(|v^{(m)}(t)|^{2}_{H}|z^{(m)}(t)|^{2}_{H}+|z^{(m)}(t)|^{4}_{H})+\frac{\nu}{2}|v^{(m)}(t)|^{2}_{V}.
\end{equation}
\noindent We have
\begin{align}
\frac{d}{dt}|(v^{(m)})(t)|_{H}^{2} + \nu|v^{(m)}(t)|^{2}_{V} & \leq \frac{C}{\nu}|v^{(m)}(t)|^{2}_{H}|z^{(m)}(t)|^{2}_{H} + \frac{C}{\nu}|z^{(m)}|^{4}_{L^{\infty}([0,T]:H)}\nonumber
\\& \leq \frac{C}{\nu}|v^{(m)}(t)|^{2}_{H}|z^{(m)}|^{2}_{L^{\infty}([0,T]:H)} + \frac{C}{\nu}|z^{(m)}(t)|^{4}_{L^{\infty}([0,T]:H)}.\nonumber
\end{align}
\noindent Integrating from $t-\tau$ to $t$
\begin{align}
& |(v^{(m)})(t)|_{H}^{2} + \nu\int_{t-\tau}^{t}|v^{(m)}(s)|^{2}_{V}ds \nonumber
\\& \leq \frac{C}{\nu}|z^{(m)}|^{2}_{L^{\infty}([0,T]:H)}(|(v^{(m)})(t)|_{H}^{2}-|(v^{(m)})(t-\tau)|_{H}^{2}) + |(v^{(m)})(t-\tau)|_{H}^{2} + \frac{C}{\nu}\tau|z^{(m)}|^{4}_{L^{\infty}([0,T]:H)}.\nonumber
\end{align}
\begin{equation}\nonumber
\nu\int_{t-\tau}^{t}|v^{(m)}(s)|^{2}_{V}ds  \leq |(v^{(m)})(t-\tau)|_{H}^{2} + \frac{C}{\nu}|z^{(m)}|^{2}_{L^{\infty}([0,T]:H)}|(v^{(m)})(t)|_{H}^{2} + \frac{C}{\nu}\tau|z^{(m)}|^{4}_{L^{\infty}([0,T]:H)}.
\end{equation}
\noindent Due to \eqref{|v^{(m)}(t)|^{2}_{H}_bound}
\begin{align}
& \int_{t-\tau}^{t}|v^{(m)}(s)|^{2}_{V}ds \nonumber
\\& \leq \frac{1}{\nu}[|(v^{(m)})(t-\tau)|_{H}^{2} + \frac{C}{\nu}|z^{(m)}|^{2}_{L^{\infty}([0,T]:H)}|(v^{(m)})(t)|_{H}^{2} + \frac{C}{\nu}\tau|z^{(m)}|^{4}_{L^{\infty}([0,T]:H)}] \nonumber
\\& \leq \frac{1}{\nu}\bigg[\bigg(e^{\Big(\frac{C}{\nu}(t-\tau)|z^{(m)}|^{2}_{L^{\infty}([0,T]:H)}\Big)}(|v^{(m)}(0)|^{2}_{H} + |z^{(m)}|^{2}_{L^{\infty}([0,T]:H)})\bigg) +  \frac{C}{\nu}\tau|z^{(m)}|^{4}_{L^{\infty}([0,T]:H)}\nonumber
\\& \ \ \ +\frac{C}{\nu}|z^{(m)}|^{2}_{L^{\infty}([0,T]:H)}\bigg( e^{\Big(\frac{C}{\nu}t|z^{(m)}|^{2}_{L^{\infty}([0,T]:H)}\Big)}(|v^{(m)}(0)|^{2}_{H} + |z^{(m)}|^{2}_{L^{\infty}([0,T]:H)})\bigg)\bigg]\nonumber.
\end{align}
\noindent By the mean value theorem there exists $t_{0}\in [t-\tau,t]$ such that $\int_{t-\tau}^{t}|v^{(m)}(s)|^{2}_{V}ds = \tau |v^{(m)}(t_{0})|^{2}_{V}$. Thus,
\begin{align}
& |v^{(m)}(t_{0})|^{2}_{V} \nonumber
\\& \leq \frac{1}{\nu\tau}\bigg[|v^{(m)}(0)|^{2}_{H}\bigg( e^{\Big(\frac{C}{\nu}(t-\tau)|z^{(m)}|^{2}_{L^{\infty}([0,T]:H)}\Big)} + \frac{C}{\nu}|z^{(m)}|^{2}_{L^{\infty}([0,T]:H)}e^{\Big(\frac{C}{\nu}t|z^{(m)}|^{2}_{L^{\infty}([0,T]:H)}\Big)} \bigg)\nonumber
\\& \ \ \ + |z^{(m)}|^{2}_{L^{\infty}([0,T]:H)} + \frac{C}{\nu}(1+\tau)|z^{(m)}|^{4}_{L^{\infty}([0,T]:H)} \bigg]\nonumber
\end{align}
\begin{align}\label{|v^{(m)}(t_{0})|^{2}_{V}_bound}
& |v^{(m)}(t_{0})|^{2}_{V} \nonumber
\\& \leq \frac{1}{\nu\tau}\bigg[|v^{(m)}(0)|^{2}_{H}\bigg( e^{\Big(\frac{C}{\nu}T|z^{(m)}|^{2}_{L^{\infty}([0,T]:H)}\Big)} + \frac{C}{\nu}|z^{(m)}|^{2}_{L^{\infty}([0,T]:H)}e^{\Big(\frac{C}{\nu}T|z^{(m)}|^{2}_{L^{\infty}([0,T]:H)}\Big)} \bigg)\nonumber
\\& \ \ \ + |z^{(m)}|^{2}_{L^{\infty}([0,T]:H)} + \frac{C}{\nu}(1+T)|z^{(m)}|^{4}_{L^{\infty}([0,T]:H)} \bigg].
\end{align}
\noindent Consider diadic intervals of the form $[\frac{T}{2^{j+1}},\frac{T}{2^{j}}]$ for $j=0,1,2,\ldots$ such that $\tau=\frac{T}{2^{j+1}}$. Apply \eqref{|v^{(m)}(t_{0})|^{2}_{V}_bound} for $t\in [\frac{T}{2^{j+1}},\frac{T}{2^{j}}]$. Then
\begin{align}
& |v^{(m)}(t)|^{2}_{V} \nonumber
\\& \leq \frac{1}{\nu\tau}\bigg[|v^{(m)}(0)|^{2}_{H}\bigg( e^{\Big(\frac{C}{\nu}T|z^{(m)}|^{2}_{L^{\infty}([0,T]:H)}\Big)} + \frac{C}{\nu}|z^{(m)}|^{2}_{L^{\infty}([0,T]:H)}e^{\Big(\frac{C}{\nu}T|z^{(m)}|^{2}_{L^{\infty}([0,T]:H)}\Big)} \bigg)\nonumber
\\& \ \ \ + |z^{(m)}|^{2}_{L^{\infty}([0,T]:H)} + \frac{C}{\nu}(1+T)|z^{(m)}|^{4}_{L^{\infty}([0,T]:H)} \bigg]\nonumber.
\end{align}
\noindent Since $t\in [\frac{T}{2^{j+1}},\frac{T}{2^{j}}]$, $\tau\leq t \leq 2\tau$. Thus,
\begin{align}
& 2\tau|v^{(m)}(t)|^{2}_{V} \nonumber
\\& \leq \frac{2}{\nu}\bigg[|v^{(m)}(0)|^{2}_{H}\bigg( e^{\Big(\frac{C}{\nu}T|z^{(m)}|^{2}_{L^{\infty}([0,T]:H)}\Big)} + \frac{C}{\nu}|z^{(m)}|^{2}_{L^{\infty}([0,T]:H)}e^{\Big(\frac{C}{\nu}T|z^{(m)}|^{2}_{L^{\infty}([0,T]:H)}\Big)} \bigg)\nonumber
\\& \ \ \ + |z^{(m)}|^{2}_{L^{\infty}([0,T]:H)} + \frac{C}{\nu}(1+T)|z^{(m)}|^{4}_{L^{\infty}([0,T]:H)} \bigg].\nonumber
\end{align}
\begin{align}\label{t|v^{(m)}(t)|^{2}_{V}_bound}
& t|v^{(m)}(t)|^{2}_{V} \nonumber
\\& \leq \frac{2}{\nu}\bigg[|v^{(m)}(0)|^{2}_{H}\bigg( e^{\Big(\frac{C}{\nu}T|z^{(m)}|^{2}_{L^{\infty}([0,T]:H)}\Big)} + \frac{C}{\nu}|z^{(m)}|^{2}_{L^{\infty}([0,T]:H)}e^{\Big(\frac{C}{\nu}T|z^{(m)}|^{2}_{L^{\infty}([0,T]:H)}\Big)} \bigg)\nonumber
\\& \ \ \ + |z^{(m)}|^{2}_{L^{\infty}([0,T]:H)} + \frac{C}{\nu}(1+T)|z^{(m)}|^{4}_{L^{\infty}([0,T]:H)} \bigg].
\end{align}
\noindent Since \eqref{t|v^{(m)}(t)|^{2}_{V}_bound} is independent of $j$ it holds for all $j=0,1,2,\ldots$. Thus \eqref{t|v^{(m)}(t)|^{2}_{V}_bound} holds for all $t\in [0,T]$. Now let $m\rightarrow \infty$. By a Galerkin approximation method,
\begin{align}\label{t|v(t)|^{2}_{V}_bound}
& t|v(t)|^{2}_{V} \nonumber
\\& \leq \frac{2}{\nu}\bigg[|v(0)|^{2}_{H}\bigg( e^{\Big(\frac{C}{\nu}T|z|^{2}_{L^{\infty}([0,T]:H)}\Big)} + \frac{C}{\nu}|z|^{2}_{L^{\infty}([0,T]:H)}e^{\Big(\frac{C}{\nu}T|z|^{2}_{L^{\infty}([0,T]:H)}\Big)} \bigg)\nonumber
\\& \ \ \ + |z|^{2}_{L^{\infty}([0,T]:H)} + \frac{C}{\nu}(1+T)|z|^{4}_{L^{\infty}([0,T]:H)} \bigg].\nonumber
\end{align}
\noindent Thus,
\begin{equation}\nonumber
t|v(t)|^{2}_{V} \leq C(\nu,|v(0)|^{2}_{H},|z|^{2}_{L^{\infty}([0,T]:H)},T)
\end{equation}
\noindent Now recalling that $$v^{(m)}(t)=u^{(m)}(t)-z^{(m)}(t)$$ we have the result for $u(t)$ and showing the result holds for the stochastic case will be similar to above step 7.
\end{proof}




\section[Some Statistical Properties of the Dyadic Model]{Some Statistical Properties of the Dyadic Model}

\noindent We start this section by introducing some preliminaries on invariant measures. We will prove the existence of invariant measures for the shell model and establish a balance relation. Using these invariant measures, we will then study on statistical properties related to the structure functions for the dyadic model. This is related to the K41 theory of Kolmogorov, (see, \cite{Kolmogorov_1941_a}, \cite{Kolmogorov_1941_b} and \cite{Frisch}). For further reading on invariant measures on infinite dimensional setting we refer to \cite{DaPrato_2006} and \cite{DaPrato_2005} and for a comprehensive reading we refer to \cite{DaPrato_1996}. Applications of invariant measures for stochastic Navier-Stokes equations we refer to \cite{Ergodicity_Flandoli} and \cite{Dissipativity_Flandoli}. We refer to \cite{Strong_feller}, \cite{BFerrario} and \cite{Benedetta} for some other interesting results.

\subsection{Preliminaries on Invariant Measures}

\noindent Let $H$ be a Hilbert space. We denote by $\mathrm{B}_{b}(H)$ the Banach space of all Borel bounded mappings $\phi :H\rightarrow \mathbb{R}$ and by $\mathrm{C}_{b}(H)$ the Banach space of all uniformly continuous and bounded mappings $\phi :H\rightarrow \mathbb{R}$. These Banach spaces are endowed with the norm
\begin{equation*}
\|\phi\|_{0} = \sup_{x\in H}|\phi(x)|.
\end{equation*}
\noindent We denote $\mathrm{L}(\mathrm{B}_{b}(H))$ as the Banach space of all linear bounded operators $P :\mathrm{B}_{b}(H) \rightarrow \mathrm{B}_{b}(H)$ and $\mathrm{L}(\mathrm{C}_{b}(H))$ as the Banach space of all linear bounded operators $P :\mathrm{C}_{b}(H) \rightarrow \mathrm{C}_{b}(H)$. We use $\mathcal{M}_{1}(H)$ to denote the space of all probability measures on $(H, \mathfrak{B}(H))$ where $\mathfrak{B}(H)$ is the $\sigma$-algebra of all Borel subsets of $H$.
\begin{definition}(\emph{Markov Semigroup})\label{Markov_semigroup}
A \emph{Markov semigroup} $P_{t}$ on $\mathrm{B}_{b}(H)$ is a mapping $[0,+\infty)\rightarrow \mathrm{L}(\mathrm{B}_{b}(H))$, $t\rightarrow P_{t}$ such that
\begin{enumerate}
\item $P_{0}=I$,
\item $P_{t+s}=P_{t}P_{s}$ for all $t,s\geq0$,
\item for any $t\geq0$ and $x\in H$ there exists a probability measure $\pi_{t}(x,\cdot)\in \mathcal{M}_{1}(H)$ such that
\begin{equation}\nonumber
(P_{t}\phi)(x) = \int_{H}\phi(y)\pi_{t}(x,dy)\ \ \text{ for all } \phi\in \mathrm{B}_{b}(H),
\end{equation}
\item for any $x\in H$ and $\phi\in \mathrm{C}_{b}(H)$ the mapping $t\rightarrow P_{t}\phi(x)$ is continuous.
\end{enumerate}
\end{definition}

\subsection{Existence of Invariant Measures}

\begin{definition}(\emph{Invariant Measure})\label{Invariant_Measure}
\noindent A probability measure $\mu \in \mathcal{M}_{1}(H)$ is said to be \emph{invariant} for a Markov semigroup $P_{t}$ if
\begin{equation}\nonumber
\int_{H}P_{t}\phi \ d\mu \quad = \quad \int_{H}\phi\ d\mu \ \ \text{ for all } \phi \in \mathrm{B}_{b}(H) \text{ and } t\geq 0.
\end{equation}
\end{definition}
\begin{definition}(\emph{Feller})\label{Feller_Property}
A Markov semigroup $P_{t}$ is called \emph{Feller} if $P_{t}\phi \ \in \  \mathrm{C}_{b}(H)$ whenever $\phi \ in \ \mathrm{C}_{b}(H)$ and for any $t\geq0$.
\end{definition}
\begin{definition}(\emph{Tightness})
A subset $\Lambda \subset \mathcal{M}_{1}(H)$ is said to be \emph{tight} if there exists an increasing sequence $(K_{n})$ of compact sets of $H$ such that
\begin{equation}\nonumber
\lim_{n\rightarrow \infty}\mu(K_{n}) \ = \ 1 \ \text{ uniformly on } \Lambda
\end{equation}
or equivalently if for any $\epsilon>0$ there exists a compact set $K_{\epsilon}$ such that
\begin{equation}\nonumber
\mu(K_{\epsilon}) \ \geq 1 \ - \ \epsilon, \quad \mu \ \in \ \Lambda.
\end{equation}
\end{definition}
\begin{theorem} (Krylov-Bogoliubov Theorem)\label{Krylov-Bogoliubov_Theorem}
For any $T>0$, let
\begin{equation}\label{set}
\mu_{T}(E) \ = \ \frac{1}{T} \int_{0}^{T}\pi_{t}(x_{0},E)dt, \ E\in \mathfrak{B}(H), x_{0}\in H.
\end{equation}
Let $P_{t}$ be a Markov Feller semigroup. Suppose for some $x_{0}\in H$ the set $(\mu_{T})_{T\geq0}$ defined by \eqref{set} is tight. Then there exists an invariant measure for $P_{t}$.
\end{theorem}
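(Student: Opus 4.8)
The plan is to extract a weak limit of the family $(\mu_T)_{T\geq 0}$ and to show that this limit is invariant for $P_t$. Since $H$ is a separable Hilbert space, hence a Polish space, and the family $(\mu_T)$ is tight by hypothesis, Prokhorov's theorem furnishes a sequence $T_n\to\infty$ and a probability measure $\mu\in\mathcal{M}_{1}(H)$ such that $\mu_{T_n}$ converges weakly to $\mu$, i.e. $\int_{H}\phi\,d\mu_{T_n}\to\int_{H}\phi\,d\mu$ for every $\phi\in\mathrm{C}_{b}(H)$. It then suffices to verify the invariance identity $\int_{H}P_{s}\phi\,d\mu=\int_{H}\phi\,d\mu$ for all $s\geq 0$ and all $\phi\in\mathrm{C}_{b}(H)$, since invariance on $\mathrm{C}_{b}(H)$ extends to $\mathrm{B}_{b}(H)$ by a standard monotone class argument.

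The heart of the argument is a telescoping estimate showing that $\mu_T$ is ``almost invariant'' for $T$ large. Using property (3) of Definition \ref{Markov_semigroup} one writes $\int_{H}\phi\,d\mu_T=\frac{1}{T}\int_{0}^{T}(P_{t}\phi)(x_{0})\,dt$, and by the semigroup property (2), $\int_{H}P_{s}\phi\,d\mu_T=\frac{1}{T}\int_{0}^{T}(P_{t+s}\phi)(x_{0})\,dt$. Subtracting these and performing the change of variable $r=t+s$ in the second integral, the bulk of the mass cancels and one is left with
\begin{equation}\nonumber
\int_{H}P_{s}\phi\,d\mu_T-\int_{H}\phi\,d\mu_T=\frac{1}{T}\bigg[\int_{T}^{T+s}(P_{r}\phi)(x_{0})\,dr-\int_{0}^{s}(P_{r}\phi)(x_{0})\,dr\bigg].
\end{equation}
Because each $P_{r}$ is a contraction on $\mathrm{B}_{b}(H)$ for the sup norm $\|\cdot\|_{0}$ (an immediate consequence of $\pi_{r}(x,\cdot)$ being a probability measure, so that $\|P_{r}\phi\|_{0}\leq\|\phi\|_{0}$), both integrals on the right are bounded in absolute value by $s\|\phi\|_{0}$. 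Hence the whole expression is at most $2s\|\phi\|_{0}/T$, which tends to $0$ as $T\to\infty$.

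Finally I would pass to the limit along the sequence $T_n$. Here the Feller property (Definition \ref{Feller_Property}) is essential: since $\phi\in\mathrm{C}_{b}(H)$ we also have $P_{s}\phi\in\mathrm{C}_{b}(H)$, so both $\int_{H}P_{s}\phi\,d\mu_{T_n}$ and $\int_{H}\phi\,d\mu_{T_n}$ converge to their counterparts against $\mu$ by weak convergence. Letting $n\to\infty$ in the displayed identity, the left-hand side converges to $\int_{H}P_{s}\phi\,d\mu-\int_{H}\phi\,d\mu$ while the right-hand side vanishes, which yields the invariance of $\mu$. I expect this last step to be the main obstacle: without the Feller property $P_{s}\phi$ need not be continuous, and the weak convergence of $\mu_{T_n}$ would give no control over $\int_{H}P_{s}\phi\,d\mu_{T_n}$. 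By contrast, the contraction bound and the telescoping cancellation are routine once the correct limit measure has been produced.
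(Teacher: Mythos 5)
Your proof is correct, and it is precisely the classical Krylov--Bogoliubov argument (Prokhorov extraction from the tight family, the telescoping identity $\int_{H}P_{s}\phi\,d\mu_{T}-\int_{H}\phi\,d\mu_{T}=\frac{1}{T}\bigl[\int_{T}^{T+s}(P_{r}\phi)(x_{0})\,dr-\int_{0}^{s}(P_{r}\phi)(x_{0})\,dr\bigr]$ with the contraction bound, and the Feller property to pass to the limit along $T_{n}$). The paper does not prove this theorem itself but defers to Da Prato--Zabczyk, and your argument is exactly the proof given in that cited reference, so there is nothing to add.
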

\begin{proof}
See \cite{DaPrato_1996}, page 21.
\end{proof}
\subsection{Invariant Measures for Shell Models}
\begin{theorem}
There exists an invariant measure for the shell model \eqref{shell_model_differential_form_componentwise}.
\end{theorem}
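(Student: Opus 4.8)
The plan is to apply the Krylov--Bogoliubov Theorem \ref{Krylov-Bogoliubov_Theorem}. This requires two ingredients: that the solutions of \eqref{shell_model_differential_form_infinite_dimensional} generate a Markov \emph{Feller} semigroup on $H$, and that for some fixed $x_0\in H$ the family of time averages $(\mu_T)_{T>0}$ from \eqref{set} is tight.

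For the first ingredient, I would set $P_t\phi(x)=\mathbb{E}[\phi(u(t;x))]$ for $\phi\in \mathrm{B}_b(H)$, where $u(\cdot;x)$ denotes the unique global solution of \eqref{shell_model_differential_form_infinite_dimensional} issued from the deterministic datum $u(0)=x$, which exists and is unique by Theorems \ref{existence_weak_solution} and \ref{uniqueness_of_the_solution}. Uniqueness yields the flow property, from which the Markov property, the semigroup identity $P_{t+s}=P_tP_s$, and the transition kernels $\pi_t(x,\cdot)=\mathrm{Law}(u(t;x))$ follow by the standard argument, so that $P_t$ is a Markov semigroup in the sense of Definition \ref{Markov_semigroup}. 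To obtain the Feller property (Definition \ref{Feller_Property}) I would invoke the pathwise continuous dependence on the initial condition quantified in Theorem \ref{uniqueness_of_the_solution}: that estimate shows that for $P$-a.e.\ fixed noise path the map $x\mapsto u(t;x)$ is continuous in $H$, so if $x_n\to x$ in $H$ then $u(t;x_n)\to u(t;x)$ in $H$ almost surely, hence $\phi(u(t;x_n))\to\phi(u(t;x))$ for $\phi\in\mathrm{C}_b(H)$; since $|\phi|\le\|\phi\|_0$, bounded convergence gives $P_t\phi(x_n)\to P_t\phi(x)$, while continuity of $t\mapsto P_t\phi(x)$ follows from the path continuity $u(\cdot;x)\in C([0,T],H)$. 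Thus $P_t\phi\in\mathrm{C}_b(H)$ and $P_t$ is Feller.

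The heart of the argument is the tightness of $(\mu_T)$, which I would extract from the energy balance of Theorem \ref{estimate_for_expectation}. Taking a deterministic $x_0\in H$, using the case $p=2$, and discarding the nonnegative term $\mathbb{E}|u(T)|_H^2$ gives
\[
\frac{1}{T}\int_0^T \mathbb{E}|u(s;x_0)|_V^2\,ds \;\le\; \frac{|x_0|_H^2}{2\nu T} + \frac{\mathrm{Tr}\,Q}{2\nu},
\]
which is bounded by a constant $C_0$ uniformly for $T\ge 1$ (only the regime $T\to\infty$ relevant to Theorem \ref{Krylov-Bogoliubov_Theorem} matters, since for small $T$ the measure $\mu_T$ concentrates near $x_0$). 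Rewriting the left-hand side as $\int_H |u|_V^2\,\mu_T(du)\le C_0$ and applying Chebyshev's inequality yields $\mu_T(\{|u|_V>R\})\le C_0/R^2$ uniformly in $T$. Because the embedding $V\subset H$ is compact, the closure in $H$ of each ball $\{|u|_V\le R\}$ is a compact set $K_R$ with $\mu_T(K_R)\ge 1-C_0/R^2$; letting $R\to\infty$ establishes tightness. Applying Theorem \ref{Krylov-Bogoliubov_Theorem} then produces an invariant measure for $P_t$, which is the desired invariant measure for the shell model.

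I expect the tightness step to be the main obstacle, since it is where the dissipativity of the equation and the compactness of the embedding $V\subset H$ must be combined to control the $H$-valued process uniformly in time; the Feller property, by contrast, is essentially a repackaging of the continuous dependence on data already established.
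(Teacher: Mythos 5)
Your proposal is correct and follows essentially the same route as the paper: define the Markov semigroup $P_t\phi(x)=\mathbb{E}[\phi(u^x(t))]$, obtain the Feller property from the pathwise continuous dependence of Theorem \ref{uniqueness_of_the_solution} plus dominated convergence, derive tightness of the time averages $(\mu_T)$ from the energy estimate of Theorem \ref{estimate_for_expectation} via Chebyshev's inequality, and conclude with the Krylov--Bogoliubov Theorem \ref{Krylov-Bogoliubov_Theorem}. If anything, your write-up is slightly more careful than the paper's at two points: you make explicit that the Chebyshev bound on $V$-balls yields tightness \emph{in $H$} through the compact embedding $V\subset H$ (the paper loosely says ``tight in $V$''), and you note that the uniform-in-$T$ bound on $\frac{1}{T}\int_0^T\mathbb{E}|u(s)|_V^2\,ds$ only needs to hold for $T$ bounded away from $0$.
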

\begin{proof}
\noindent Following the Theorem \ref{existence_weak_solution} and \ref{uniqueness_of_the_solution} we have that for any initial condition $x=u(0)\in H$ there is a unique continuous adapted solution in $H$. Using the continuous dependence of the solution with respect to the initial condition in the Theorem \ref{uniqueness_of_the_solution}, we get that,
$$u^{x_{n}}(t)\rightarrow u^{x}(t) \text{ whenever } x_{n}\rightarrow x \text{ in } H, \text{ P-a.s.}$$
for all $t\geq 0$. Thus, we can define a map
$$P_{t}:\mathrm{B}_{b}(H)\rightarrow \mathrm{B}_{b}(H)$$
by the formula
$$(P_{t}\phi)(x)=\mathbb{E}[\phi(u^{x}(t))].$$
By the Lebesgue dominated convergence theorem, we deduce that the semigroup $P_{t}$ is Feller. The Markovianity follows from the Galerkin approximations. Let $\psi_{t}$ be the law of $u^{x}(t)$, $T>0$ and define the probability measure $\mu_{T}$ on $H$ as
\begin{equation}\nonumber
\mu_{T}=\frac{1}{T}\int_{0}^{T}\psi_{s}ds.
\end{equation}
\noindent By applying the Chebyshev's inequality and the Theorem \ref{estimate_for_expectation} we get,
\begin{align}
\mu_{T}(|x|^{2}_{V}\geq R)&=\frac{1}{T}\int_{0}^{T}\psi(|x|^{2}_{V}\geq R)ds\nonumber
\\& \leq \frac{1}{T}\int_{0}^{T}\frac{\mathbb{E}[|u^{x}(s)|^{2}_{V}]}{R}ds\nonumber
\\& \leq \frac{C}{R}.\nonumber
\end{align}
\noindent Hence $(\mu_{T})$ is tight in $V$. Since the semigroup $P_{t}$ is Feller, by the Krylov-Bogoliubov Theorem \ref{Krylov-Bogoliubov_Theorem} there exists an invariant measure in $H$.
\end{proof}
%
\subsection{Balance Relations for the Dyadic Model}

\noindent Given $\nu>0$, let $\mu^{\nu}$ be any invariant measure for the dyadic model and let $\mathbb{E}^{\nu}$ denotes the corresponding expectation. Let us introduce the following quantities;
\begin{equation}\label{varepsilon_n}
\varepsilon_{n} = \nu k_{n}^{2}\mathbb{E}^{\nu}[|u_{n}|^{2}],
\end{equation}
\begin{equation}\label{phi_n}
\phi_{n} = k_{n}\mathbb{E}^{\nu}[u_{n}^{2}u_{n+1}].
\end{equation}
\noindent We interpret $\varepsilon_{n}$ as the mean rate of energy dissipation at scale $k_{n}^{-1}$ and $\phi_{n}$ as the mean rate of energy flux from scale $k_{n}^{-1}$ to smaller scales. We then observe that $\phi_{n-1} = k_{n-1}\mathbb{E}^{\nu}[u_{n-1}^{2}u_{n}]$ is the mean rate of energy flux from larger scales to scale $k_{n}^{-1}$. We interpret $\sigma^{2}$ as the mean rate of energy injection at scale $k_{n}^{-1}$. We note that the quantities $\mathbb{E}^{\nu}[u_{n}^{2}]$ and $\mathbb{E}^{\nu}[u_{n}^{2}u_{n+1}]$ are finite. We have the following balance relation.
\begin{proposition}(Balance Relation)\label{balance_relation}
We have
\begin{equation}\label{balance_relation_equation_1}
\nu k_{n}^{2}\mathbb{E}^{\nu}[|u_{n}|^{2}] + k_{n}\mathbb{E}^{\nu}[u_{n}^{2}u_{n+1}] = k_{n-1}\mathbb{E}^{\nu}[u_{n-1}^{2}u_{n}] + \frac{1}{2}\sigma_{n}^{2}
\end{equation}
or using the notations introduced above
\begin{equation}\label{balance_relation_equation_2}
\varepsilon_{n} + \phi_{n} = \phi_{n-1} + \frac{1}{2}\sigma_{n}^{2} \ \text{ for all } n\geq 1
\end{equation}
for \eqref{shell_model_differential_form_componentwise}.
\end{proposition}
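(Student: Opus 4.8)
The plan is to apply the It\^{o} formula to the real-valued map $u_{n}\mapsto u_{n}^{2}$ along the $n$-th component of the dyadic system, take expectations in the stationary regime, and exploit invariance to annihilate the temporal boundary contributions. First I would initialize the process with $u(0)$ distributed according to $\mu^{\nu}$, so that $(u(t))_{t\geq 0}$ is stationary and $\mathbb{E}^{\nu}[u_{n}(t)^{2}]$ is independent of $t$ (recall that for the dyadic model $u_{n}$ is real, so $|u_{n}|^{2}=u_{n}^{2}$). The $n$-th scalar equation of \eqref{shell_model_differential_form_componentwise} reads
$$du_{n} = \bigl(-\nu k_{n}^{2}u_{n} + k_{n-1}u_{n-1}^{2} - k_{n}u_{n}u_{n+1}\bigr)\,dt + \sigma_{n}\,d\beta_{n},$$
and since the quadratic variation of $u_{n}$ is $\sigma_{n}^{2}\,t$, the It\^{o} formula applied to $u_{n}^{2}$ gives, on $[0,t]$,
$$u_{n}(t)^{2} - u_{n}(0)^{2} = \int_{0}^{t} 2u_{n}\bigl(-\nu k_{n}^{2}u_{n} + k_{n-1}u_{n-1}^{2} - k_{n}u_{n}u_{n+1}\bigr)\,ds + \int_{0}^{t} 2\sigma_{n}u_{n}\,d\beta_{n} + \sigma_{n}^{2}\,t.$$

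Next I would take expectations. By stationarity the left-hand side has zero mean, the It\^{o} correction contributes $\sigma_{n}^{2}\,t$, and (after the localization discussed below) the stochastic integral is a mean-zero martingale. Because the drift integrand is itself stationary, its time integral has expectation $t$ times the common value, so dividing by $2t$ yields
$$0 = -\nu k_{n}^{2}\,\mathbb{E}^{\nu}[u_{n}^{2}] + k_{n-1}\,\mathbb{E}^{\nu}[u_{n-1}^{2}u_{n}] - k_{n}\,\mathbb{E}^{\nu}[u_{n}^{2}u_{n+1}] + \tfrac{1}{2}\sigma_{n}^{2}.$$
Rearranging gives \eqref{balance_relation_equation_1}, and \eqref{balance_relation_equation_2} follows upon inserting the definitions \eqref{varepsilon_n} and \eqref{phi_n}; for $n=1$ the convention $u_{0}=0$ forces $\phi_{0}=0$, consistent with the boundary condition.

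The main obstacle is integrability: the formal computation requires that $\mathbb{E}^{\nu}[u_{n}^{2}]$, $\mathbb{E}^{\nu}[u_{n}^{2}u_{n+1}]$ and $\mathbb{E}^{\nu}[u_{n-1}^{2}u_{n}]$ be finite, and that the stochastic integral genuinely have vanishing expectation. I would handle this by a stopping-time localization, running the It\^{o} identity up to $\tau_{R}=\inf\{s\geq 0 : |u(s)|^{2}_{H}\geq R\}\wedge t$, for which the stopped stochastic integral is a true martingale with zero expectation, and then letting $R\to\infty$. The passage to the limit is justified by the moment bounds of Theorem \ref{estimate_for_expectation}: the third-order mixed moments are dominated by $\mathbb{E}^{\nu}|u|^{3}_{H}$, which is finite under the invariant measure (the $p=3$ case of the $L^{p}$ estimate, transferred to $\mu^{\nu}$ via invariance together with the time-averaged bound on $\tfrac{1}{T}\mathbb{E}\int_{0}^{T}|u|^{p}_{H}\,ds$). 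Dominated convergence then removes the localization from the drift terms, and the identity above holds as stated, completing the proof.
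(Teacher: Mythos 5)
Your proposal is correct and follows essentially the same route as the paper's own proof: apply the It\^{o} formula to $u_{n}^{2}$ along a stationary solution associated with $\mu^{\nu}$, take expectations so the stochastic integral vanishes and the temporal boundary terms cancel by stationarity, and divide out the time variable to obtain \eqref{balance_relation_equation_1}. Your added stopping-time localization and appeal to the moment bounds of Theorem \ref{estimate_for_expectation} merely make rigorous the integrability and zero-mean-martingale claims that the paper asserts without detail, so there is nothing substantively different to compare.
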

\begin{proof}
Suppose $u(t)$ is a stationary solution associated with the invariant measure $\mu^{\nu}$. Apply Ito formula for $|u_{n}|^{2}$. We get
\begin{align}
d|u_{n}|^{2} &= 2u_{n}du + \frac{1}{2}2\sigma_{n}^{2}dt\nonumber
\\&= 2u_{n}[-\nu k_{n}^{2}u_{n}dt + (k_{n-1}u_{n-1}^{2}-k_{n}u_{n}u_{n+1})dt + \sigma_{n}d\beta_{n}] + \sigma_{n}^{2}dt\nonumber
\\&= -2\nu k_{n}^{2}u_{n}^{2}dt + 2k_{n-1}u_{n-1}^{2}u_{n}dt - 2k_{n}u_{n}^{2}u_{n+1}dt + 2\sigma_{n}u_{n}d\beta_{n} + \sigma_{n}^{2}dt.\nonumber
\end{align}
\noindent Integrating from $0$ to $t$ we obtain
\begin{equation}\nonumber
|u_{n}(t)|^{2} = |u_{n}(0)|^{2} - 2\int_{0}^{t}\nu k_{n}^{2}u_{n}^{2}ds + \int_{0}^{t}k_{n-1}u_{n-1}^{2}u_{n}ds -2\int_{0}^{t}k_{n}u_{n}^{2}u_{n+1}ds + 2\int_{0}^{t}\sigma_{n}u_{n}d\beta_{n} + \sigma_{n}^{2}t.
\end{equation}
\noindent Taking expectation with respect to the invariant measure $\mu^{\nu}$ we obtain
\begin{align}
\mathbb{E}^{\nu}[|u_{n}|^{2}] &= \mathbb{E}^{\nu}[|u_{n}(0)|^{2}] - 2\int_{0}^{t}\nu k_{n}^{2}\mathbb{E}^{\nu}[|u_{n}|^{2}]ds\nonumber
\\ & \ + \int_{0}^{t}k_{n-1}\mathbb{E}^{\nu}[u_{n-1}^{2}u_{n}]ds -2\int_{0}^{t}k_{n}\mathbb{E}^{\nu}[u_{n}^{2}u_{n+1}]ds + \sigma_{n}^{2}t.\nonumber
\end{align}
\noindent Since $\mathbb{E}^{\nu}[|u_{n}(t)|^{2}] = \mathbb{E}^{\nu}[|u_{n}(0)|^{2}]$ by stationarity of $u$, we have
\begin{equation}\nonumber
\int_{0}^{t}\nu k_{n}^{2}\mathbb{E}^{\nu}[|u_{n}|^{2}]ds  + \int_{0}^{t}k_{n}\mathbb{E}^{\nu}[u_{n}^{2}u_{n+1}]ds = \int_{0}^{t}k_{n-1}\mathbb{E}^{\nu}[u_{n-1}^{2}u_{n}]ds + \frac{1}{2}\sigma_{n}^{2}t.
\end{equation}
\noindent By stationarity of $u$, the integrands are independent of $s$. Thus,
\begin{equation}\nonumber
\nu k_{n}^{2}\mathbb{E}^{\nu}[|u_{n}|^{2}] + k_{n}\mathbb{E}^{\nu}[u_{n}^{2}u_{n+1}] = k_{n-1}\mathbb{E}^{\nu}[u_{n-1}^{2}u_{n}] + \frac{1}{2}\sigma_{n}^{2}.
\end{equation}
\end{proof}
\subsection[Some Statistical Properties]{Some Statistical Properties}

\noindent We start this section with a definition and for more details we refer to \cite{Ditlevsena}, \cite{Gallavotti}, \cite{Biferale_2003}, \cite{Benzi}, \cite{Scaling_Flandoli} and \cite{GOY_Flandoli}.

\begin{definition}(\emph{$p$-Order Structure Function}) For any $\nu>0$ let $\mu^{\nu}$ be any invariant measure of the dyadic model and $\mathbb{E}^{\nu}$ denotes the corresponding expectation. The expression
\begin{equation}\nonumber
S_{p}^{\nu}(n):=\mathbb{E}^{\nu}[|u_{n}^{\nu}|^{p}]
\end{equation}
is called the \emph{$p$-order structure function}.
\end{definition}
\noindent One is interested in the scaling behavior of the form
$$\mathbb{E}^{\nu}[|u_{n}^{\nu}|^{p}] \sim k_{n}^{-\zeta_{p}}.$$
This can be true only in an intermediate range of $n'$s according to Kolmogorov conjecture, we refer to \cite{GOY_Flandoli} and references therein for more details. We work in a range of the form $n\in [n_{-}(\nu),n_{+}(\nu)]$ with $n_{-},n_{+}:(0,1)\rightarrow \mathbb{N}$ such that
\begin{enumerate}
	\item $n_{-}(\nu)<n_{+}(\nu),$
	\item $\displaystyle\lim_{\nu\rightarrow 0}\frac{n_{-}(\nu)}{n_{+}(\nu)}=0.$
\end{enumerate}
\noindent Following \cite{GOY_Flandoli}, on $n_{+}(\nu)$ we impose the condition
$$ \lim_{\nu\rightarrow 0} \frac{n_{+}(\nu)}{\log_{2}(\nu)}=\frac{3}{4}.$$
\noindent We now define the asymptotic exponents.
\begin{definition}(\emph{Asymptotic Exponents of Order $p$})
Let $(\mu^{\nu})_{\nu>0}$ be a set of invariant measures for the dyadic model. Let $n_{-},n_{+}:(0,1)\rightarrow \mathbb{N}$ such that $n_{-}(\nu)<n_{+}(\nu),$ and $\displaystyle\lim_{\nu\rightarrow 0}\frac{n_{-}(\nu)}{n_{+}(\nu)}=0.$ Let $$R=\{(\nu,n)\in (0,1)\times \mathbb{N}: n\in [n_{-}(\nu),n_{+}(\nu)]\}.$$ For $p\geq 0$, for the given set of invariant measures $\mu^{\nu}$, and for $(\nu,n)\in R$, we define the following quantities.
$$\zeta_{p}^{+}:=-\liminf_{\stackrel{\nu \rightarrow 0}{(\nu,n)\in R}}\frac{1}{n}\log_{2}\mathbb{E}^{\nu}[|u_{n}|^{p}],$$
$$\zeta_{p}^{-}:=-\limsup_{\stackrel{\nu \rightarrow 0}{(\nu,n)\in R}}\frac{1}{n}\log_{2}\mathbb{E}^{\nu}[|u_{n}|^{p}].$$
When $\zeta_{p}^{+}=\zeta_{p}^{-}$ we define the common number $\zeta_{p}$ the \emph{asymptotic exponent of order $p$}.
\end{definition}
\noindent The value of $\zeta_{p}$ is not known analytically but based on numerical results and physical intuition it is agreed that $\zeta_{3}=1$ and $\zeta_{p}<\frac{p}{3}$ for large $p$. However, the value of $\zeta_{2}$ is not clear. Kolmogorov theory for 3D turbulence claimed that $\zeta_{2}=\frac{2}{3}$, but numerical simulations suggest that $\zeta_{2}>\frac{2}{3}$. Thus it is interesting to have an analytical proof for the value of $\zeta_{2}$. In \cite{GOY_Flandoli}, some necessary and sufficient conditions have been studied for both $\zeta_{2}=\frac{2}{3}$ and $\zeta_{2}\geq \frac{2}{3}$ for the GOY model. However, an analytical proof is lacking for either case. Further, these results were obtained under an assumption which is not derived from the balance law. Our aim is to improve these results in the same line for the dyadic model. However, we obtain some rigorous results about the statistical relationships of the variables $u_{n}$ and $u_{n+1}$ for the dyadic model.
%
%
%
%
%
\subsection{Some Results on K41 Theory}
\begin{lemma}\label{phi_n_md}
Consider the balance relation given in \eqref{balance_relation_equation_2}. Assume $\sigma = \sigma_{1} \neq 0$ and $\sigma_{n} = 0$ for all $n\geq 2$ in \eqref{shell_model_differential_form_componentwise}. Then, $$\phi_{n+1} \leq \phi_{n}$$ for all $n\geq 1.$
\end{lemma}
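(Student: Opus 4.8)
The plan is to read off the desired monotonicity directly from the balance relation \eqref{balance_relation_equation_2}, evaluated one index higher than usual. The key observation is that under the hypothesis $\sigma_n = 0$ for all $n \geq 2$, the energy-injection term disappears from the balance relation at every index $n+1$ with $n \geq 1$, leaving a clean recursion relating $\phi_n$ and $\phi_{n+1}$ through the dissipation rate $\varepsilon_{n+1}$.

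First I would apply \eqref{balance_relation_equation_2} with $n$ replaced by $n+1$, which is legitimate since $n+1 \geq 2 \geq 1$. This gives
$$\varepsilon_{n+1} + \phi_{n+1} = \phi_n + \tfrac{1}{2}\sigma_{n+1}^{2}.$$
Because $n \geq 1$ forces $n+1 \geq 2$, the assumption $\sigma_{n+1} = 0$ lets me drop the last term, so that $\varepsilon_{n+1} + \phi_{n+1} = \phi_n$, that is,
$$\phi_n - \phi_{n+1} = \varepsilon_{n+1}.$$
It then remains only to note that $\varepsilon_{n+1} \geq 0$: from its definition \eqref{varepsilon_n}, $\varepsilon_{n+1} = \nu k_{n+1}^{2}\,\mathbb{E}^{\nu}[|u_{n+1}|^{2}]$ is the product of the positive viscosity $\nu$, the positive wave number squared $k_{n+1}^{2}$, and the expectation of a nonnegative quantity, hence nonnegative. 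Therefore $\phi_n - \phi_{n+1} = \varepsilon_{n+1} \geq 0$, which is exactly the claimed inequality $\phi_{n+1} \leq \phi_n$.

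There is no serious obstacle here; the only point requiring a moment's care is to invoke the balance relation at the \emph{shifted} index $n+1$ rather than at $n$, so that the vanishing noise coefficient is $\sigma_{n+1}$ (always zero for $n \geq 1$) rather than $\sigma_n$, which is nonzero precisely when $n=1$. The finiteness of $\mathbb{E}^{\nu}[|u_{n+1}|^{2}]$ that makes $\varepsilon_{n+1}$ meaningful has already been recorded in the discussion preceding Proposition \ref{balance_relation}, so the argument above is complete as stated.
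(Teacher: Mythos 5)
Your proof is correct and is essentially identical to the paper's own argument: both evaluate the balance relation \eqref{balance_relation_equation_2} at indices $\geq 2$ (where $\sigma_n$ vanishes), so that $\varepsilon_{n+1} + \phi_{n+1} = \phi_n$, and then conclude from $\varepsilon_{n+1} \geq 0$. The only difference is cosmetic — the paper states the relation at index $n \geq 2$ and shifts indices at the end, while you shift at the start — so nothing further is needed.
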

\begin{proof}
From the balance relation, we have $\varepsilon_{n} + \phi_{n} = \phi_{n-1}$ for all $n\geq2$. Since $\varepsilon_{n} \geq 0$ for all $n\geq 2,$ we have $\phi_{n} \leq \phi_{n-1}$ for all $n\geq2$. Thus, $\phi_{n+1} \leq \phi_{n}$ for all $n\geq1$.
\end{proof}
\begin{theorem}\label{upper_bound_for_phi_n}
Consider the balance relation given in \ref{balance_relation_equation_2}. Assume $\sigma = \sigma_{1} \neq 0$ and $\sigma_{n} = 0$ for all $n\geq 2$ in \eqref{shell_model_differential_form_componentwise}. Then, $$\phi_{n} \leq \frac{1}{2}\sigma^{2}$$ for all $n\geq 1.$
\end{theorem}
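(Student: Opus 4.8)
The plan is to combine the boundary behavior of the flux at the lowest shell with the monotonicity already established in Lemma \ref{phi_n_md}. The key observation is that the recursion in the balance relation \eqref{balance_relation_equation_2} is \emph{seeded} at $n=1$ by a vanishing flux from below, and this is what turns the monotone sequence $(\phi_n)$ into a bounded one.

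First I would evaluate the balance relation \eqref{balance_relation_equation_1} at $n=1$. The crucial point is that the incoming-flux term $\phi_0 = k_0\,\mathbb{E}^{\nu}[u_0^2 u_1]$ vanishes, because the dyadic model \eqref{shell_model_differential_form_componentwise} carries the boundary condition $u_0 = 0$. Under the standing assumption $\sigma = \sigma_1$ and $\sigma_n = 0$ for $n\geq 2$, the balance relation at $n=1$ therefore collapses to
\begin{equation}\nonumber
\varepsilon_1 + \phi_1 = \tfrac{1}{2}\sigma^2 .
\end{equation}
Since $\varepsilon_1 = \nu k_1^2\,\mathbb{E}^{\nu}[|u_1|^2] \geq 0$ by \eqref{varepsilon_n}, this immediately gives the base estimate $\phi_1 \leq \tfrac{1}{2}\sigma^2$.

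Next I would invoke Lemma \ref{phi_n_md}, which under exactly the same hypotheses states that $\phi_{n+1}\leq \phi_n$ for all $n\geq 1$; that is, the sequence $(\phi_n)_{n\geq 1}$ is nonincreasing. Consequently $\phi_n \leq \phi_1$ for every $n\geq 1$. Chaining this with the base estimate yields $\phi_n \leq \phi_1 \leq \tfrac{1}{2}\sigma^2$ for all $n\geq 1$, which is the claim.

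I do not expect a genuine obstacle here: the argument is a one-line recursion base case followed by an appeal to the already-proved monotonicity. The only point that requires care is the vanishing of $\phi_0$, which is not an independent hypothesis but a consequence of the boundary condition $u_0=0$; overlooking it would leave the recursion unanchored and the bound would fail. Everything else follows from nonnegativity of $\varepsilon_1$ and Lemma \ref{phi_n_md}.
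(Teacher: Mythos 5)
Your proposal is correct and follows essentially the same route as the paper: the base estimate $\phi_1 \leq \tfrac{1}{2}\sigma^2$ from $\varepsilon_1 \geq 0$ and $\phi_0 = 0$, combined with the monotonicity of Lemma \ref{phi_n_md}. The only cosmetic difference is that you apply the monotonicity once to get $\phi_n \leq \phi_1$ directly, whereas the paper phrases the same step as an explicit induction on $n$ citing that lemma.
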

\begin{proof}
We have $\varepsilon_{n} \geq 0$ for all $n\geq 1$. Thus, $\phi_{n} \leq \phi_{n-1} + \frac{1}{2}\sigma_{n}^{2}$. For $n=1,$ we have $\phi_{1} \leq \frac{1}{2}\sigma^{2}$ as $\phi_{0}=0$. For $n=2$, we have $\phi_{2} \leq \phi_{1}$ as $\sigma_{2}=0$ and thus $\phi_{2} \leq \frac{1}{2}\sigma^{2}$. Assume for $n=p$, $\phi_{p}\leq \frac{1}{2}\sigma^{2},$ then $\phi_{p+1} \leq \frac{1}{2}\sigma^{2}$ since $\phi_{P+1} \leq \phi_{p}$ by Lemma \ref{phi_n_md}. Thus, $\phi_{n} \leq \frac{1}{2}\sigma^{2}$ for all $n\geq 1$.
\end{proof}
\begin{remark}
Consider the balance relation given in \ref{balance_relation}. Assume $\sigma = \sigma_{1} \neq 0$ and $\sigma_{n} = 0$ for all $n\geq 2$ for \eqref{shell_model_differential_form_componentwise}. Since $\phi_{n} = k_{n}\mathbb{E}^{\nu}[u_{n}^{2}u_{n+1}]$ and $\phi_{n} \leq \frac{1}{2}\sigma^{2},$ we have $k_{n}\mathbb{E}^{\nu}[u_{n}^{2}u_{n+1}] < \frac{1}{2}\sigma^{2}$. Thus, $$\mathbb{E}^{\nu}[u_{n}^{2}u_{n+1}] \leq k_{n+1}^{-1}\sigma^{2}.$$
\end{remark}
\noindent In order to prove an estimate for $|\phi_{n}|$ we will prove the following Lemmas. We are unable to get an estimate valid for all $n$ and we need to impose some restriction on the range of $n$s. The following Lemma indicates that the mean dissipation rate is exponentially smaller than the mean flux rate for a certain range of $n$s.
\begin{lemma}\label{upper_bound_for_varepsilon}
Assume that there exist $\overline{\nu} >0$ and $\gamma>0$ such that $$\frac{\mathbb{E}^{\nu}[|u_{n}|^{2}]}{|\mathbb{E}^{\nu}[u_{n}^{2}u_{n+1}]|^{2/3}} \leq \gamma$$  \noindent for all $(\nu,n) \in  (0,\overline{\nu}]\times[n_{-}(\nu),n_{+}(\nu)]$ and assume that $$\displaystyle\liminf_{\nu \rightarrow 0}\frac{n_{+}(\nu)}{\log_{2}\nu} > -\frac{3}{4}.$$\\
\noindent Then there exists $\alpha \in (0,1)$ depending on $n_{+}$ for all $\delta >0$, there exists $\nu_{0}>0$ depending on $\delta, \gamma, n_{+}$ such that $$0\leq\varepsilon_{n} \leq \delta \alpha^{n} |\phi_{n}|^{2/3}$$ for all $(\nu, n) \in (0,\nu_{0}]\times [n_{-}(\nu), n_{+}(\nu)].$
\end{lemma}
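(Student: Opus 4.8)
The plan is to reduce the whole statement to a single elementary inequality coming from the definitions of $\varepsilon_{n}$ and $\phi_{n}$ together with the standing ratio hypothesis, and then to absorb the growth in $n$ into a decaying geometric factor by exploiting the strict gap in the $\liminf$ assumption. The nonnegativity $\varepsilon_{n}\geq 0$ is immediate from $\varepsilon_{n}=\nu k_{n}^{2}\mathbb{E}^{\nu}[|u_{n}|^{2}]$, so the entire content is the upper bound. First I would compute, using $\varepsilon_{n}=\nu k_{n}^{2}\mathbb{E}^{\nu}[|u_{n}|^{2}]$, $|\phi_{n}|^{2/3}=k_{n}^{2/3}|\mathbb{E}^{\nu}[u_{n}^{2}u_{n+1}]|^{2/3}$, and $k_{n}=k_{0}2^{n}$ for the dyadic model, that
\[
\frac{\varepsilon_{n}}{|\phi_{n}|^{2/3}}=\nu\, k_{n}^{4/3}\,\frac{\mathbb{E}^{\nu}[|u_{n}|^{2}]}{|\mathbb{E}^{\nu}[u_{n}^{2}u_{n+1}]|^{2/3}}\leq \nu\,\gamma\, k_{0}^{4/3}\,2^{4n/3},
\]
where the last inequality is exactly the standing hypothesis on the ratio (which in particular guarantees $\mathbb{E}^{\nu}[u_{n}^{2}u_{n+1}]\neq 0$, so the quotient is well defined). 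Hence it suffices to arrange $\nu\gamma k_{0}^{4/3}2^{4n/3}\leq \delta\alpha^{n}$ throughout the range.

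Setting $\beta:=\alpha\,2^{-4/3}$, the target becomes $\nu\gamma k_{0}^{4/3}\leq \delta\beta^{n}$. Since any admissible $\alpha\in(0,1)$ forces $\beta<2^{-4/3}<1$, the map $n\mapsto\beta^{n}$ is decreasing, so it is enough to verify the bound at the right endpoint $n=n_{+}(\nu)$. This is where the hypothesis on $n_{+}$ enters: from $\liminf_{\nu\to0}n_{+}(\nu)/\log_{2}\nu>-\tfrac34$ I would extract $\eta\in(0,\tfrac34)$ and $\nu_{1}>0$ with $n_{+}(\nu)\leq(\tfrac34-\eta)\log_{2}(1/\nu)$ for all $\nu\leq\nu_{1}$. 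Because $\beta<1$ this yields the lower bound $\beta^{n_{+}(\nu)}\geq \nu^{\,b(3/4-\eta)}$, where $b:=-\log_{2}\beta=\tfrac43-\log_{2}\alpha>\tfrac43$.

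The choice of $\alpha$ is then dictated by requiring the resulting power of $\nu$ to be favorable. Combining the displays, the estimate follows once $\nu\gamma k_{0}^{4/3}\leq \delta\nu^{\,b(3/4-\eta)}$, that is $\nu^{\,1-b(3/4-\eta)}\leq \delta/(\gamma k_{0}^{4/3})$. I would choose $\alpha$ close enough to $1$ (depending only on $\eta$, hence on $n_{+}$) so that $b(\tfrac34-\eta)<1$; concretely this is the condition $\alpha>2^{-16\eta/(3(3-4\eta))}$, which still leaves room inside $(0,1)$. With that choice the exponent $1-b(\tfrac34-\eta)$ is strictly positive, so $\nu^{\,1-b(3/4-\eta)}\to0$ as $\nu\to0$, and it suffices to take $\nu_{0}\leq\min\{\overline{\nu},\nu_{1}\}$ small enough (depending on $\delta,\gamma,n_{+}$) to force the inequality for all $\nu\leq\nu_{0}$ and all $n\in[n_{-}(\nu),n_{+}(\nu)]$, which gives $0\leq\varepsilon_{n}\leq\delta\alpha^{n}|\phi_{n}|^{2/3}$ as claimed.

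I expect the main obstacle to be the bookkeeping in the last step: one must check that the strict inequality in the $\liminf$ hypothesis leaves exactly enough slack to push $b(\tfrac34-\eta)$ below $1$ while keeping $\alpha<1$. The power $\tfrac43$ appearing in $k_{n}^{4/3}$ and the cut-off exponent $\tfrac34$ are conjugate, since $\tfrac43\cdot\tfrac34=1$, so in the borderline case $\eta=0$ the exponent of $\nu$ would be exactly $0$ and the argument would collapse; it is precisely the strict gap $\eta>0$ that makes a genuine geometric decay $\alpha^{n}$ with $\alpha<1$ attainable.
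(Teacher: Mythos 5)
Your proof is correct and follows essentially the same route as the paper: both use the ratio hypothesis to reduce to $\varepsilon_{n}\leq \gamma\,\nu\,k_{n}^{4/3}|\phi_{n}|^{2/3}$ and then exploit the strict gap in the $\liminf$ hypothesis to absorb $\nu k_{n}^{4/3}$ into $\delta\alpha^{n}$ for $\nu$ small, with $\delta$ realized as a positive power of $\nu$ times a constant. The paper's two-parameter splitting $\nu k_{n}^{4/3}=\nu^{\eta_{1}}(\nu^{1-\eta_{1}}k_{n}^{4/3+\eta_{2}})k_{n}^{-\eta_{2}}$ subject to $\frac{1-\eta_{1}}{4/3+\eta_{2}}=\frac{3}{4}-\epsilon$ is, with $\alpha=2^{-\eta_{2}}$, exactly your endpoint condition $\alpha>2^{-16\eta/(3(3-4\eta))}$ in different variables, so the two arguments coincide up to bookkeeping (your version has the merit of making explicit the admissibility constraint that the paper leaves implicit in the choice of $\eta_{1},\eta_{2}>0$).
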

\begin{proof}
Recall that $\phi_{n} = k_{n}\mathbb{E}^{\nu}[u_{n}^{2}u_{n+1}].$ Thus, $|\mathbb{E}^{\nu}[u_{n}^{2}u_{n+1}]|^{2/3} = k_{n}^{-2/3}|\phi_{n}|^{2/3}$. Using the assumption $\displaystyle\frac{\mathbb{E}^{\nu}[|u_{n}|^{2}]}{|\mathbb{E}^{\nu}[u_{n}^{2}u_{n+1}]|^{2/3}} \leq \gamma,$ we get  $\mathbb{E}^{\nu}[|u_{n}|^{2}] \leq \gamma |\mathbb{E}^{\nu}[u_{n}^{2}u_{n+1}]|^{2/3} = \gamma k_{n}^{-2/3} |\phi_{n}|^{2/3}$. Thus we have,
\begin{align}
\varepsilon_{n} = \nu k_{n}^{2}\mathbb{E}^{\nu}[|u_{n}|^{2}] &\leq \gamma \nu k_{n}^{2}k_{n}^{-2/3}|\phi_{n}|^{2/3} \nonumber
\\ &= \gamma \nu k_{n}^{4/3}|\phi_{n}|^{2/3} \nonumber
\\ &= (\gamma \nu^{\eta_{1}})(\nu^{1-\eta_{1}}k_{n}^{4/3+\eta_{2}})k_{n}^{-\eta_{2}}|\phi_{n}|^{2/3} \nonumber
\\ &\leq (\gamma \nu^{\eta_{1}})(\nu^{1-\eta_{1}}k_{n_{+}(\nu)}^{4/3+\eta_{2}})k_{n}^{-\eta_{2}}|\phi_{n}|^{2/3}\nonumber
\end{align}
\noindent for all $n\in [n_{-}(\nu),n_{+}(\nu)],$ $ \nu \in (0,\overline{\nu}]$ and for all $\eta_{1}, \eta_{2} >0.$\\
\noindent Now consider the assumption $\displaystyle\liminf_{\nu \rightarrow 0}\frac{n_{+}(\nu)}{\log_{2}\nu} > -\frac{3}{4}.$ Thus, there exists $\epsilon >0,$ and $\nu'>0$ such that $\frac{n_{+}(\nu)}{\log_{2}\nu} \geq -\frac{3}{4} + \epsilon$ for all $\nu<\nu'$. Thus, $n_{+}(\nu) \leq (-\frac{3}{4} + \epsilon)\log_{2}(\nu)$, $(0<\nu<1)$. Thus, $2^{n_{+}(\nu)} \leq \nu^{(-3/4 + \epsilon)}$ and $2^{n_{+}(\nu)}\nu^{(3/4 -\epsilon)} \leq 1$. Assuming $k_{0}=1$ we have $k_{n_{+}(\nu)}\nu^{(3/4 -\epsilon)} \leq 1$. Now choose $\eta_{1}, \eta_{2}>0$ such that $\frac{1-\eta_{1}}{4/3+\eta_{2}}=\frac{3}{4}-\epsilon$. Then,
\begin{center}
$(\nu^{1-\eta_{1}}k_{n_{+}(\nu)}^{4/3+\eta_{2}}) = (\nu^{\frac{1-\eta_{1}}{4/3+\eta_{2}}}k_{n_{+}(\nu)})^{4/3+\eta_{2}} = (\nu^{3/4-\epsilon}k_{n_{+}(\nu)})^{4/3+\eta_{2}} \leq 1.$
\end{center}
\noindent Then (assuming $k_{0}=1$) we have for all $\nu\leq\nu_{0}=\min\{\overline{\nu},\nu'\}$
\begin{equation}\nonumber
\epsilon_{n} \leq (\gamma \nu^{\eta_{1}})(k_{n}^{-\eta_{2}})|\phi_{n}|^{2/3} = (\gamma \nu^{\eta_{1}})\bigg(\frac{1}{2^{\eta_{2}}}\bigg)^{n}|\phi_{n}|^{2/3}.
\end{equation}
By taking $\delta = (\gamma \nu^{\eta_{1}})$ and $\alpha = \big(\frac{1}{2^{\eta_{2}}}\big) \in (0,1)$ we get $\varepsilon_{n} \leq \delta \alpha^{n} |\phi_{n}|^{2/3}$.
\end{proof}
\begin{remark}
Note that $\alpha$ depends on $n_{+}(\nu)$ and we can make $\delta>0$ arbitrary small by choosing $\nu_{0}$ small.
\end{remark}
\noindent For the next Lemma, we need the same assumptions we used in the above Lemma \ref{upper_bound_for_varepsilon} as we will use the estimate for $\varepsilon_{1}$. Further, we will follow an iterating procedure on $n$ so we set $n_{-}(\nu)=1$.
\begin{lemma}\label{lower_bound_for_phi_1}
Suppose $n_{-}(\nu)=1$, there exist $\overline{\nu} >0$, $\gamma>0$ such that $\displaystyle\frac{\mathbb{E}^{\nu}[|u_{n}|^{2}]}{|\mathbb{E}^{\nu}[u_{n}^{2}u_{n+1}]|^{2/3}} \leq \gamma$ for all $(\nu,n) \in  (0,\overline{\nu}]\times[1,n_{+}(\nu)]$, and $\displaystyle\liminf_{\nu \rightarrow 0}\frac{n_{+}(\nu)}{\log_{2}\nu} > -\frac{3}{4}.$ Consider the balance relation given in \ref{balance_relation}. If $\delta < \frac{3\sigma^{2/3}}{2\alpha}$ then $$\phi_{1}\geq 0.$$
\end{lemma}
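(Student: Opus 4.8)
The plan is to read $\phi_1$ off the balance relation at the first shell, feed in the dissipation bound from Lemma~\ref{upper_bound_for_varepsilon}, and then exclude the case $\phi_1<0$ by a one-variable optimization whose threshold turns out to be exactly the constant $\frac{3\sigma^{2/3}}{2\alpha}$ appearing in the hypothesis.

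First I would specialize the balance relation \eqref{balance_relation_equation_2} to $n=1$. Since the boundary condition $u_0=0$ forces $\phi_0=k_0\mathbb{E}^{\nu}[u_0^2u_1]=0$ and $\sigma_1=\sigma$, it reduces to $\varepsilon_1+\phi_1=\frac12\sigma^2$, that is, $\phi_1=\frac12\sigma^2-\varepsilon_1$. Because we have set $n_-(\nu)=1$, the index $n=1$ lies in the range $[n_-(\nu),n_+(\nu)]$, so the hypotheses stated here are exactly those of Lemma~\ref{upper_bound_for_varepsilon}; applying that lemma at $n=1$ gives $0\leq\varepsilon_1\leq\delta\alpha|\phi_1|^{2/3}$.

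Next I would argue by contradiction, assuming $\phi_1<0$. Then $\varepsilon_1=\frac12\sigma^2-\phi_1>\frac12\sigma^2$ and $|\phi_1|=\varepsilon_1-\frac12\sigma^2$, so the dissipation bound becomes $\varepsilon_1\leq\delta\alpha\big(\varepsilon_1-\frac12\sigma^2\big)^{2/3}$. Substituting $y=\varepsilon_1-\frac12\sigma^2>0$, this is equivalent to $g(y)\geq\frac12\sigma^2$, where $g(y):=\delta\alpha y^{2/3}-y$. A direct computation (solving $g'(y)=0$) shows that $g$ attains its maximum over $y>0$ at $y^{\ast}=\big(\frac{2\delta\alpha}{3}\big)^3$, with maximal value $g(y^{\ast})=\frac{4}{27}\delta^3\alpha^3$.

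Finally, I would observe that the hypothesis $\delta<\frac{3\sigma^{2/3}}{2\alpha}$ is, upon cubing, precisely the inequality $\frac{4}{27}\delta^3\alpha^3<\frac12\sigma^2$; consequently $g(y)\leq g(y^{\ast})<\frac12\sigma^2$ for every $y>0$, which contradicts $g(y)\geq\frac12\sigma^2$. Hence $\phi_1<0$ is impossible and $\phi_1\geq0$. I expect the only delicate point to be the calibration of the constant: the entire force of the hypothesis is consumed in pushing the maximum $\frac{4}{27}\delta^3\alpha^3$ below $\frac12\sigma^2$, so the maximization in $y$ must be carried out exactly rather than estimated loosely.
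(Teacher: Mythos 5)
Your proof is correct and follows essentially the same route as the paper: specialize the balance relation to $n=1$ (where $\phi_0=0$), invoke Lemma~\ref{upper_bound_for_varepsilon} to get $\varepsilon_1\leq\delta\alpha|\phi_1|^{2/3}$, and rule out $\phi_1<0$ by maximizing a one-variable function whose maximum $\frac{4}{27}\delta^3\alpha^3$ falls below $\frac12\sigma^2$ exactly when $\delta<\frac{3\sigma^{2/3}}{2\alpha}$. Your function $g(y)=\delta\alpha y^{2/3}-y$ is just the paper's $f(x)=x+\delta\alpha|x|^{2/3}$ after the substitution $x=-y$, so the two optimizations are identical.
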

\begin{proof}
\noindent From the balance relation \eqref{balance_relation_equation_2}, we have $\varepsilon_{1} + \phi_{1} = \frac{1}{2}\sigma^{2}.$ From \ref{upper_bound_for_varepsilon} we have the estimate $\varepsilon_{1} \leq \delta \alpha |\phi_{1}|^{2/3}$. Thus, $\phi_{1} + \delta \alpha |\phi_{1}|^{2/3} \geq \frac{1}{2}\sigma^{2}$.\\
\noindent Now assume $\phi_{1}<0.$ Consider the function $f(x) = x + \delta \alpha |x|^{2/3}$ for  $x<0$. Solving $f'(x)=0$ we get the only critical point $x_{0}$ at $-\frac{8}{27}\delta^{3}\alpha^{3}<0$ and we have $f''(x_{0})=-\frac{9}{8}\frac{1}{\delta^{3}\alpha^{3}}<0$. Thus at $x_{0}$, $f$ has a global maximum and the maximum value $f(x_{0}) = \frac{4}{27}\delta^{3}\alpha^{3}$. If $\delta < \frac{3\sigma^{2/3}}{2\alpha}$ then the maximum value of $f$ is less than $\frac{1}{2}\sigma^{2}$. This is a contradiction. Thus, $\phi_{1}\geq 0.$
\end{proof}
\noindent We now give a lower bound for $\phi_{n}$ using the lower bound proven in the above Lemma \ref{lower_bound_for_phi_1} for $\phi_{1}$ and employing an inductive procedure. Since we use the estimates proved in Theorem \ref{upper_bound_for_phi_n} and Lemma \ref{upper_bound_for_varepsilon} and \ref{lower_bound_for_phi_1} this estimate is valid under all the assumptions used previously.
\begin{lemma}\label{lower_bound_for_phi_n}
Consider the balance relation given in \eqref{balance_relation_equation_2}. Assume $\sigma = \sigma_{1} \neq 0$ and $\sigma_{n} = 0$ for all $n\geq 2$ in \eqref{shell_model_differential_form_componentwise}. Assume that there exist $\overline{\nu} >0$ and $\gamma>0$ such that $\displaystyle\frac{\mathbb{E}^{\nu}[|u_{n}|^{2}]}{|\mathbb{E}^{\nu}[u_{n}^{2}u_{n+1}]|^{2/3}} \leq \gamma$ for all $(\nu,n) \in  (0,\overline{\nu}]\times[1,n_{+}(\nu)]$ and assume that $\displaystyle\liminf_{\nu \rightarrow 0}\frac{n_{+}(\nu)}{\log_{2}\nu} > -\frac{3}{4}.$  If $$\delta < \frac{(1-\alpha)\sigma^{2/3}}{2^{1/3}\alpha}$$ then $$\phi_{n} \geq \frac{1}{2}\sigma^{2} - \delta\bigg(\frac{1}{2}\bigg)^{2/3}\sigma^{4/3}\alpha(1+\alpha+...+\alpha^{n-1})>0$$ for all $(\nu,n) \in  (0,\overline{\nu}]\times[1,n_{+}(\nu)]$.
\end{lemma}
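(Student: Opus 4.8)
The plan is to argue by induction on $n$, proving simultaneously that $\phi_n>0$ and that $\phi_n$ is bounded below by the claimed quantity, which I abbreviate $L_n:=\tfrac{1}{2}\sigma^2-\delta(\tfrac{1}{2})^{2/3}\sigma^{4/3}\alpha(1+\alpha+\cdots+\alpha^{n-1})$. First I would record that $L_n>0$ for every $n$: since $\alpha\in(0,1)$ we have $1+\alpha+\cdots+\alpha^{n-1}<\frac{1}{1-\alpha}$, so $L_n>L_\infty:=\tfrac12\sigma^2-\delta(\tfrac12)^{2/3}\sigma^{4/3}\frac{\alpha}{1-\alpha}$, and the hypothesis $\delta<\frac{(1-\alpha)\sigma^{2/3}}{2^{1/3}\alpha}$ is precisely the condition making $L_\infty>0$ (using $\tfrac12\cdot 2^{2/3}=2^{-1/3}$). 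Note also that this hypothesis is stronger than the condition $\delta<\frac{3\sigma^{2/3}}{2\alpha}$ of Lemma \ref{lower_bound_for_phi_1}, since $\frac{1-\alpha}{2^{1/3}}<\frac{1}{2^{1/3}}<\frac32$.

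For the base case $n=1$, the balance relation \eqref{balance_relation_equation_2} with $\sigma_1=\sigma$, $\sigma_n=0$ gives $\phi_1=\tfrac12\sigma^2-\varepsilon_1$. Lemma \ref{lower_bound_for_phi_1} yields $\phi_1\geq 0$, so $|\phi_1|=\phi_1\leq\tfrac12\sigma^2$ by Theorem \ref{upper_bound_for_phi_n}, and then Lemma \ref{upper_bound_for_varepsilon} gives $\varepsilon_1\leq\delta\alpha|\phi_1|^{2/3}\leq\delta\alpha(\tfrac12)^{2/3}\sigma^{4/3}$, whence $\phi_1\geq L_1$. For the inductive step, assume $\phi_n\geq L_n>0$. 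I would first establish positivity of $\phi_{n+1}$. The balance relation $\varepsilon_{n+1}+\phi_{n+1}=\phi_n$ (valid for $n+1\geq2$) together with $\varepsilon_{n+1}\leq\delta\alpha^{n+1}|\phi_{n+1}|^{2/3}$ gives $\phi_{n+1}+\delta\alpha^{n+1}|\phi_{n+1}|^{2/3}\geq\phi_n\geq L_n$. If one had $\phi_{n+1}<0$, then exactly as in the proof of Lemma \ref{lower_bound_for_phi_1} the map $x\mapsto x+\delta\alpha^{n+1}|x|^{2/3}$ on $x<0$ attains maximum $\frac{4}{27}\delta^3\alpha^{3(n+1)}$, forcing $L_n\leq\frac{4}{27}\delta^3\alpha^{3(n+1)}$. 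This I would contradict by the sharper lower bound $L_n>\delta(\tfrac12)^{2/3}\sigma^{4/3}\frac{\alpha^{n+1}}{1-\alpha}$, which follows from the identity $L_n=L_\infty+\delta(\tfrac12)^{2/3}\sigma^{4/3}\frac{\alpha^{n+1}}{1-\alpha}$ and $L_\infty>0$; after cancelling $\delta\alpha^{n+1}$ and inserting $\delta^2<\frac{(1-\alpha)^2\sigma^{4/3}}{2^{2/3}\alpha^2}$, the required inequality reduces to $\frac{4}{27}(1-\alpha)^3\alpha^{2n}<1$, which is immediate. Hence $\phi_{n+1}\geq0$.

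Once positivity is in hand the lower bound follows cleanly: $|\phi_{n+1}|=\phi_{n+1}\leq\tfrac12\sigma^2$ by Theorem \ref{upper_bound_for_phi_n}, so $\varepsilon_{n+1}\leq\delta\alpha^{n+1}(\tfrac12)^{2/3}\sigma^{4/3}$, and therefore $\phi_{n+1}=\phi_n-\varepsilon_{n+1}\geq L_n-\delta\alpha^{n+1}(\tfrac12)^{2/3}\sigma^{4/3}=L_{n+1}$, where the last equality is the telescoping identity $\alpha(1+\cdots+\alpha^{n-1})+\alpha^{n+1}=\alpha(1+\cdots+\alpha^n)$. Since $L_{n+1}>L_\infty>0$, the induction closes and yields both the stated lower bound and its strict positivity. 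The monotonicity $\phi_{n+1}\le\phi_n$ from Lemma \ref{phi_n_md} is implicit in $\varepsilon_{n+1}\ge0$ and may be quoted to streamline the exposition.

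The main obstacle is the positivity step. The estimate for $\varepsilon_{n+1}$ supplied by Lemma \ref{upper_bound_for_varepsilon} carries the factor $|\phi_{n+1}|^{2/3}$, so one cannot replace it by $(\tfrac12\sigma^2)^{2/3}$ until the sign of $\phi_{n+1}$ is under control. The delicate point is that bounding the maximum $\frac{4}{27}\delta^3\alpha^{3(n+1)}$ against the crude constant $L_\infty$ fails when $\delta$ approaches $\frac{(1-\alpha)\sigma^{2/3}}{2^{1/3}\alpha}$, because $L_\infty\to0$ there; the argument succeeds only if one retains the $n$-dependent term $\frac{\alpha^{n+1}}{1-\alpha}$ in $L_n$, whose geometric decay exactly matches the decay $\alpha^{3(n+1)}$ in the dissipation bound.
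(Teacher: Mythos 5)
Your proof is correct and takes essentially the same route as the paper: induction on $n$ driven by the balance relation, with the upper bound from Theorem \ref{upper_bound_for_phi_n}, the dissipation estimate of Lemma \ref{upper_bound_for_varepsilon}, and positivity of each $\phi_{n+1}$ obtained by the contradiction argument of Lemma \ref{lower_bound_for_phi_1}. Your handling of that positivity step --- the identity $L_n = L_\infty + \delta(\tfrac12)^{2/3}\sigma^{4/3}\tfrac{\alpha^{n+1}}{1-\alpha}$ and the reduction to $\tfrac{4}{27}(1-\alpha)^3\alpha^{2n}<1$ --- is in fact an explicit verification of what the paper compresses into ``following the similar reasoning,'' so your write-up is more complete at exactly the delicate point.
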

\begin{proof}
Since $\phi_{1}\geq 0$ by Lemma \ref{lower_bound_for_phi_1} and $\phi_{1} \leq \frac{1}{2}\sigma^{2}$ by Theorem \ref{upper_bound_for_phi_n}, $|\phi_{1}|^{2/3} \leq (\frac{1}{2})^{2/3}\sigma^{4/3}$. From Lemma \ref{upper_bound_for_varepsilon} we have $\varepsilon_{1} \leq \delta \alpha |\phi_{1}|^{2/3}$. Thus, $\varepsilon_{1} \leq \delta \alpha (\frac{1}{2})^{2/3}\sigma^{4/3}$. From the balance relation  \ref{balance_relation} we have $\varepsilon_{1} + \phi_{1}=\frac{1}{2}\sigma^{2}.$ Thus,
$$\phi_{1} \geq \frac{1}{2}\sigma^{2}  - \delta\Big(\frac{1}{2}\Big)^{2/3}\sigma^{4/3}\alpha>0.$$
Note that again from the balance relation  \ref{balance_relation} we have $\varepsilon_{2} + \phi_{2}=\phi_{1}.$ Since $\varepsilon_{2} \leq \delta\alpha^{2}|\phi_{2}|^{2/3}$, and $\phi_{1} \geq \frac{1}{2}\sigma^{2}  - \delta(\frac{1}{2})^{2/3}\sigma^{4/3}\alpha$ following the similar reasoning we get $$\phi_{2} \geq \frac{1}{2}\sigma^{2}  - \delta\Big(\frac{1}{2}\Big)^{2/3}\sigma^{4/3}\alpha(1+\alpha)>0.$$
Now assume that $$\phi_{p} \geq \frac{1}{2}\sigma^{2}  - \delta\Big(\frac{1}{2}\Big)^{2/3}\sigma^{4/3}\alpha(1+\alpha+...+\alpha^{p-1}).$$ From the balance relation  \ref{balance_relation} we have $\phi_{p+1} + \varepsilon_{p+1} = \phi_{p}$ and using the similar argument we get$$\phi_{p+1} \geq \frac{1}{2}\sigma^{2} - \delta\Big(\frac{1}{2}\Big)^{2/3}\sigma^{4/3}\alpha(1+\alpha+...+\alpha^{p})>0.$$ \\
Thus, by the mathematical induction, the result holds for all $n\in (1,n_{+})$.
\end{proof}
\begin{remark}
Recall that $\alpha\in(0,1)$ and note that $$\frac{1}{1-\alpha}\delta\Big(\frac{1}{2}\Big)^{2/3}\sigma^{4/3}\alpha < \frac{1}{2}\sigma^{2} \text{ if and only if }\delta < \frac{(1-\alpha)\sigma^{2/3}}{2^{1/3}\alpha}.$$
\end{remark}
\begin{theorem}\label{abs_phi_n}
Consider the balance relation given in \eqref{balance_relation_equation_2}. Assume $\sigma = \sigma_{1} \neq 0$ and $\sigma_{n} = 0$ for all $n\geq 2$ in \eqref{shell_model_differential_form_componentwise}. Assume that there exist $\overline{\nu} >0$ and $\gamma>0$ such that $\displaystyle\frac{\mathbb{E}^{\nu}[|u_{n}|^{2}]}{|\mathbb{E}^{\nu}[u_{n}^{2}u_{n+1}]|^{2/3}} \leq \gamma$ for all $(\nu,n) \in  (0,\overline{\nu}]\times[1,n_{+}(\nu)]$ and assume that $\displaystyle\liminf_{\nu \rightarrow 0}\frac{n_{+}(\nu)}{\log_{2}\nu} > -\frac{3}{4}.$
Then there exists $\nu_{0}>0$ depending on $\sigma$, $\gamma$ and $n_{+}$ such that $$|\phi_{n}| \leq \frac{1}{2}\sigma^{2}$$ for all $(\nu,n) \in  (0,\nu_{0}]\times[1,n_{+}(\nu)]$.
\end{theorem}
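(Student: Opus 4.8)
The plan is to sandwich $\phi_{n}$ between the upper bound already secured in Theorem~\ref{upper_bound_for_phi_n} and a matching strict lower bound, so that $\phi_{n}$ turns out to be nonnegative and hence $|\phi_{n}|=\phi_{n}$. First I would invoke Theorem~\ref{upper_bound_for_phi_n}: under the present forcing hypothesis ($\sigma=\sigma_{1}\neq0$ and $\sigma_{n}=0$ for $n\geq2$) it already gives $\phi_{n}\leq\frac12\sigma^{2}$ for every $n\geq1$, and this step needs neither smallness of $\nu$ nor the ratio assumption. Consequently the whole theorem reduces to exhibiting a $\nu_{0}>0$ for which $\phi_{n}\geq0$ throughout the admissible range $[1,n_{+}(\nu)]$.

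For the lower bound I would appeal to Lemma~\ref{lower_bound_for_phi_n}. Recall that the constant $\alpha=(1/2)^{\eta_{2}}\in(0,1)$ furnished by Lemma~\ref{upper_bound_for_varepsilon} is fixed once the exponents $\eta_{1},\eta_{2}$ (tied to $n_{+}$ through the $\liminf$ hypothesis) have been chosen, so the threshold $\frac{(1-\alpha)\sigma^{2/3}}{2^{1/3}\alpha}$ is a fixed positive number. Since the constant $\delta$ appearing there has the form $\delta=\gamma\nu^{\eta_{1}}$ and therefore tends to $0$ as $\nu\to0$, I can select $\nu_{0}>0$, depending only on $\sigma,\gamma$ and $n_{+}$, so small that $\delta=\gamma\nu^{\eta_{1}}<\frac{(1-\alpha)\sigma^{2/3}}{2^{1/3}\alpha}$ for all $\nu\leq\nu_{0}$. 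With this choice Lemma~\ref{lower_bound_for_phi_n} applies and yields
\[
\phi_{n}\geq\frac12\sigma^{2}-\delta\Bigl(\tfrac12\Bigr)^{2/3}\sigma^{4/3}\alpha\,(1+\alpha+\cdots+\alpha^{n-1})>0
\]
for all $(\nu,n)\in(0,\nu_{0}]\times[1,n_{+}(\nu)]$; the strict positivity is exactly the content of the accompanying remark, which bounds the geometric sum by $\frac{1}{1-\alpha}$ and identifies that estimate with the threshold condition on $\delta$.

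Combining the two bounds gives $0<\phi_{n}\leq\frac12\sigma^{2}$ on $(0,\nu_{0}]\times[1,n_{+}(\nu)]$, whence $|\phi_{n}|=\phi_{n}\leq\frac12\sigma^{2}$, which is the assertion. I expect the only delicate point to be bookkeeping rather than genuine analysis: one must check that a single $\nu_{0}$ simultaneously drives $\delta$ below the threshold required by Lemmas~\ref{lower_bound_for_phi_1} and \ref{lower_bound_for_phi_n} while keeping $\alpha$ strictly below $1$. This is automatic, because $\alpha$ is determined by $\eta_{2}>0$ independently of $\nu$, so no further estimate is needed once the constants $\eta_{1},\eta_{2},\alpha$ are pinned down and $\nu_{0}$ is chosen accordingly.
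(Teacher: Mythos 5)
Your proposal is correct and follows essentially the same route as the paper, whose proof is precisely the combination of Theorem~\ref{upper_bound_for_phi_n} (upper bound), Lemma~\ref{lower_bound_for_phi_1} and Lemma~\ref{lower_bound_for_phi_n} (strict positivity), giving $0<\phi_{n}\leq\frac{1}{2}\sigma^{2}$ and hence $|\phi_{n}|\leq\frac{1}{2}\sigma^{2}$. You in fact supply more detail than the paper does, by making explicit that $\nu_{0}$ is chosen so that $\delta=\gamma\nu^{\eta_{1}}$ falls below the (smaller) threshold of Lemma~\ref{lower_bound_for_phi_n} while $\alpha$ stays fixed, which is exactly the bookkeeping the paper leaves implicit.
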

\begin{proof}
Proof is straightforward from Lemma \ref{lower_bound_for_phi_1} and \ref{lower_bound_for_phi_n} together with Theorem \ref{upper_bound_for_phi_n}.
\end{proof}
\begin{remark}\label{phi_n_two_side_inequality}
In particular, under the assumptions of above Theorem \ref{abs_phi_n} for some constant $C>0$ we have $$C\leq \phi_{n} \leq \frac{1}{2}\sigma^{2}$$ or $$Ck_{n}^{-1}\leq \mathbb{E}^{\nu}[u_{n}^{2}u_{n+1}] \leq k_{n}^{-1}\frac{1}{2}\sigma^{2}$$ for all $(\nu,n) \in  (0,\nu_{0}]\times[1,n_{+}(\nu)].$
\end{remark}
\begin{coro}\label{corollary_1}
Under the assumptions of the Theorem \ref{abs_phi_n} we have $$\mathbb{E}^{\nu}[|u_{n}|^{2}]\leq \gamma 2^{2/3}\sigma^{4/3}k_{n}^{-2/3}$$
for all $(\nu,n) \in  (0,\nu_{0}]\times[1,n_{+}(\nu)]$.
\end{coro}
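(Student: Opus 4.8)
The plan is to derive this bound by a single substitution, chaining the standing ratio hypothesis with the two-sided control on $\phi_{n}$ already established in Theorem \ref{abs_phi_n}. No fresh estimate is required; all the analytic content sits in the preceding lemmas, and this corollary merely repackages it.

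First I would start from the ratio hypothesis, which rearranges to
\begin{equation}\nonumber
\mathbb{E}^{\nu}[|u_{n}|^{2}] \leq \gamma\,|\mathbb{E}^{\nu}[u_{n}^{2}u_{n+1}]|^{2/3},
\end{equation}
valid for all $(\nu,n)\in(0,\overline{\nu}]\times[1,n_{+}(\nu)]$. Recalling $\phi_{n} = k_{n}\mathbb{E}^{\nu}[u_{n}^{2}u_{n+1}]$, I would substitute $\mathbb{E}^{\nu}[u_{n}^{2}u_{n+1}] = k_{n}^{-1}\phi_{n}$ to turn the right-hand side into a power of $|\phi_{n}|$, obtaining
\begin{equation}\nonumber
\mathbb{E}^{\nu}[|u_{n}|^{2}] \leq \gamma\, k_{n}^{-2/3}\,|\phi_{n}|^{2/3}.
\end{equation}

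Then I would invoke Theorem \ref{abs_phi_n}, which gives $|\phi_{n}| \leq \tfrac{1}{2}\sigma^{2}$ on $(0,\nu_{0}]\times[1,n_{+}(\nu)]$; this is precisely the step that forces the passage from $\overline{\nu}$ down to the smaller $\nu_{0}$. Raising to the power $2/3$ yields $|\phi_{n}|^{2/3}\leq(\tfrac{1}{2}\sigma^{2})^{2/3}=(\tfrac{1}{2})^{2/3}\sigma^{4/3}$, and since $(\tfrac{1}{2})^{2/3}\leq 2^{2/3}$ this is in turn bounded by $2^{2/3}\sigma^{4/3}$. Substituting back delivers
\begin{equation}\nonumber
\mathbb{E}^{\nu}[|u_{n}|^{2}] \leq \gamma\,2^{2/3}\sigma^{4/3}\,k_{n}^{-2/3},
\end{equation}
the asserted inequality, on the range $(0,\nu_{0}]\times[1,n_{+}(\nu)]$.

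I foresee no genuine obstacle: this is a corollary in the strict sense, following from one substitution and one appeal to the already-proved uniform bound on $\phi_{n}$. The only point demanding attention is the bookkeeping of the domain of validity — the ratio hypothesis is posed on $(0,\overline{\nu}]$, whereas Theorem \ref{abs_phi_n} controls $|\phi_{n}|$ only for $\nu\leq\nu_{0}$, so the conclusion must be restricted to $(0,\nu_{0}]\times[1,n_{+}(\nu)]$, exactly as stated.
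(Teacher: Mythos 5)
Your proposal is correct and follows essentially the same route as the paper: the paper's (very terse) proof likewise chains the ratio hypothesis with the bound $|\phi_{n}|\leq\frac{1}{2}\sigma^{2}$ of Theorem \ref{abs_phi_n}, via the inequality $|\mathbb{E}^{\nu}[u_{n}^{2}u_{n+1}]|\leq k_{n}^{-1}\frac{1}{2}\sigma^{2}$ noted in Remark \ref{phi_n_two_side_inequality}. Your additional observation that the natural constant is the sharper $\gamma(\tfrac{1}{2})^{2/3}\sigma^{4/3}$, which is then dominated by the stated $\gamma 2^{2/3}\sigma^{4/3}$, is a correct and worthwhile clarification of the constant appearing in the statement.
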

\begin{proof}
Note that the inequality $\mathbb{E}^{\nu}[u_{n}^{2}u_{n+1}] \leq k_{n}^{-1}\frac{1}{2}\sigma^{2}$ is true for all $n\in [1,n_{+}]$ in the reamrk \ref{phi_n_two_side_inequality}. Proof is immediate from this fact and the Theorem \ref{abs_phi_n}.
\end{proof}
\begin{theorem}\label{xi_2_geq_2/3_first}
Consider the balance relation given in \eqref{balance_relation_equation_2}. Assume $\sigma = \sigma_{1} \neq 0$ and $\sigma_{n} = 0$ for all $n\geq 2$ in \eqref{shell_model_differential_form_componentwise}. Further assume that there exist $\overline{\nu} >0$, $\gamma>0$ such that $\displaystyle\frac{\mathbb{E}^{\nu}[|u_{n}|^{2}]}{|\mathbb{E}^{\nu}[u_{n}^{2}u_{n+1}]|^{2/3}} \leq \gamma$  for all $(\nu,n) \in  (0,\overline{\nu}]\times[1,n_{+}(\nu)]$ and $\displaystyle\liminf_{\nu \rightarrow 0}\frac{n_{+}(\nu)}{\log_{2}\nu} > -\frac{3}{4}$. If $\gamma 2^{2/3}\sigma^{4/3}\leq 1$ then, $$\zeta^{-}_{2}\geq \frac{2}{3}$$ for all $(\nu,n) \in  (0,\overline{\nu}]\times[n_{-}(\nu),n_{+}(\nu)]$ such that $\displaystyle\limsup_{\nu \rightarrow 0}\frac{\log_{2}\nu^{-1}}{n_{-}(\nu)} <M<\infty$ for some $M>0$.
\end{theorem}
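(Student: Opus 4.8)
The plan is to read the conclusion straight off Corollary~\ref{corollary_1}, which already packages all of the analytic content; the inequality $\zeta_2^-\ge 2/3$ will then come out by feeding the pointwise estimate on $\mathbb{E}^\nu[|u_n|^2]$ into the definition of $\zeta_2^-$ and taking base-$2$ logarithms. Recalling that
\begin{equation}\nonumber
\zeta_2^- = -\limsup_{\stackrel{\nu \to 0}{(\nu,n)\in R}} \frac{1}{n}\log_2 \mathbb{E}^\nu[|u_n|^2],
\end{equation}
it suffices to show that $\limsup \frac{1}{n}\log_2\mathbb{E}^\nu[|u_n|^2] \le -\tfrac{2}{3}$ over the region $R$.

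First I would fix the threshold $\nu_0>0$ furnished by Corollary~\ref{corollary_1} and restrict attention to pairs $(\nu,n)\in R$ with $\nu\le\nu_0$. Since membership in $R$ forces $n\in[n_-(\nu),n_+(\nu)]\subset[1,n_+(\nu)]$, Corollary~\ref{corollary_1} applies and gives
\begin{equation}\nonumber
\mathbb{E}^\nu[|u_n|^2] \le \gamma\, 2^{2/3}\sigma^{4/3}\, k_n^{-2/3}.
\end{equation}
With the dyadic normalization $k_0=1$, $\lambda=2$ used throughout (so $k_n=2^n$ and $k_n^{-2/3}=2^{-2n/3}$), taking $\log_2$ and dividing by $n>0$ turns this into
\begin{equation}\nonumber
\frac{1}{n}\log_2 \mathbb{E}^\nu[|u_n|^2] \le \frac{1}{n}\log_2\!\big(\gamma\, 2^{2/3}\sigma^{4/3}\big) - \frac{2}{3}.
\end{equation}

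The hypothesis $\gamma\, 2^{2/3}\sigma^{4/3}\le 1$ is exactly what makes the constant term harmless: it forces $\log_2(\gamma\, 2^{2/3}\sigma^{4/3})\le 0$, so the first term on the right is non-positive and I obtain the \emph{pointwise} bound $\frac{1}{n}\log_2\mathbb{E}^\nu[|u_n|^2]\le -\tfrac{2}{3}$ for every $(\nu,n)\in R$ with $\nu\le\nu_0$. Passing to the $\limsup$ as $\nu\to 0$ along $R$ preserves this inequality, and negating yields $\zeta_2^-\ge\tfrac{2}{3}$. The auxiliary assumption $\limsup_{\nu\to 0}\frac{\log_2\nu^{-1}}{n_-(\nu)}<M<\infty$ forces $n_-(\nu)\to\infty$, hence $n\to\infty$ along $R$; here it merely guarantees that $R$ is probed at arbitrarily large $n$, but in the borderline variant $\gamma\,2^{2/3}\sigma^{4/3}>1$ it would be precisely what drives the correction $\frac{1}{n}\log_2(\gamma\,2^{2/3}\sigma^{4/3})$ to $0$ in the limit.

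I do not expect any genuine obstacle at this stage: the real work was already done in proving Corollary~\ref{corollary_1}, which itself rests on the two-sided control of the flux $\phi_n$ coming from Theorem~\ref{abs_phi_n} and Lemmas~\ref{lower_bound_for_phi_1}--\ref{lower_bound_for_phi_n}. The only steps needing care are bookkeeping ones: tracking the sign conventions when the minus sign is pulled through the $\limsup$ (so that it becomes a $\liminf$ of the negated quantity), and verifying that the range $[n_-(\nu),n_+(\nu)]$ over which $\zeta_2^-$ is computed sits inside the range $[1,n_+(\nu)]$ on which Corollary~\ref{corollary_1} is valid, so that the pointwise estimate may legitimately be invoked.
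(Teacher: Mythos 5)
Your proposal is correct, and it follows the same basic route as the paper's proof: both arguments rest entirely on Corollary \ref{corollary_1}, take $\log_{2}$ of the bound $\mathbb{E}^{\nu}[|u_{n}|^{2}]\leq \gamma 2^{2/3}\sigma^{4/3}k_{n}^{-2/3}$, divide by $n$, and pass to the $\limsup$ in the definition of $\zeta_{2}^{-}$. Where you genuinely differ is in how the constant and the hypothesis on $n_{-}(\nu)$ are handled. The paper first uses $\gamma 2^{2/3}\sigma^{4/3}\leq 1$ to get $\mathbb{E}^{\nu}[|u_{n}|^{2}]\leq k_{n}^{-2/3}$, but then inserts the extra slack $k_{n}^{-2/3}\leq \nu^{-\epsilon/(2M)}k_{n}^{-2/3+\epsilon/2}$ (trivially valid, since $k_{n}\geq 1$ and $\nu<1$), which produces the error term $\frac{\epsilon}{2M}\frac{\log_{2}\nu^{-1}}{n}$; it then invokes the assumption $\limsup_{\nu\rightarrow 0}\frac{\log_{2}\nu^{-1}}{n_{-}(\nu)}<M$ to bound that term by $\epsilon/2$, and concludes by letting $\epsilon\rightarrow 0$. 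Your version dispenses with all of this: under $\gamma 2^{2/3}\sigma^{4/3}\leq 1$ the pointwise bound $\frac{1}{n}\log_{2}\mathbb{E}^{\nu}[|u_{n}|^{2}]\leq -\frac{2}{3}$ holds on all of $R$ with $\nu\leq \nu_{0}$, so the $\limsup$ step is immediate. In particular, you correctly identify that under this hypothesis the condition involving $M$ plays no logical role in the proof --- it would only be needed to absorb a multiplicative constant exceeding $1$, exactly as you remark --- which is a genuine, if small, streamlining of the paper's argument; both proofs purchase the theorem at the same price, namely Corollary \ref{corollary_1}.
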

\begin{proof}
Let $\epsilon>0.$ Using the above corollary \ref{corollary_1} we have $\mathbb{E}^{\nu}[|u_{n}|^{2}]\leq \gamma 2^{2/3}\sigma^{4/3}k_{n}^{-2/3}$. Thus, $$\mathbb{E}^{\nu}[|u_{n}|^{2}]\leq k_{n}^{-2/3} \leq \nu^{-\frac{\epsilon}{2M}}k_{n}^{-\frac{2}{3}+\frac{\epsilon}{2}}.$$ Considering logarithms and then dividing both sides by $n$ we get $$\frac{1}{n}\log_{2}\mathbb{E}^{\nu}[|u_{n}|^{2}]\leq -\frac{2}{3} + \frac{\epsilon}{2}+\frac{\epsilon}{2M}\frac{\log_{2}\nu^{-1}}{n}\leq -\frac{2}{3} + \frac{\epsilon}{2}+\frac{\epsilon}{2M}\frac{\log_{2}\nu^{-1}}{n_{-}(\nu)}.$$ Then considering limit we get $$\displaystyle\limsup_{\stackrel{\nu \rightarrow 0}{(\nu,n)\in R}}\frac{1}{n}\log_{2}\mathbb{E}^{\nu}[|u_{n}|^{2}]\leq -\frac{2}{3} + \frac{\epsilon}{2}+\frac{\epsilon}{2M}\displaystyle\limsup_{\stackrel{\nu \rightarrow 0}{(\nu,n)\in R}}\frac{\log_{2}\nu^{-1}}{n_{-}(\nu)}\leq -\frac{2}{3} +\epsilon.$$ Thus, $\zeta^{-}_{2}\geq \frac{2}{3}.$
\end{proof}
\noindent We will next give sufficient and necessary conditions for $\zeta^{-}_{2}\geq \frac{2}{3}$ or $\zeta^{-}_{2}= \frac{2}{3}$. We use a slightly different assumption on the ratio $\displaystyle\frac{\mathbb{E}^{\nu}[|u_{n}|^{2}]}{|\mathbb{E}^{\nu}[u_{n}^{2}u_{n+1}]|^{2/3}}.$ We will define another quantity in order to prove the next theorem.
\begin{definition}(\emph{Flux Asymptotic Exponent})
We say the quantity defined as
$$\zeta_{3}^{flux}:=-\limsup_{\stackrel{\nu \rightarrow 0}{(\nu,n)\in R}}\frac{1}{n}\log_{2}|\mathbb{E}^{\nu}[u_{n}^{2}u_{n+1}]|$$
the \emph{flux asymptotic exponent}, when it exists.
\end{definition}
\begin{theorem}
Assume $(\nu,n) \in  (0,\overline{\nu}]\times[n_{-}(\nu),n_{+}(\nu)]$ be such that $$\nu^{-\epsilon}\leq k_{n_{+}(\nu)}\leq k_{n_{+}(\nu)}\leq \nu^{-\alpha}$$ for some $\alpha>\epsilon>0$ and $\overline{\nu}>0$ and $$\zeta_{3}^{flux}=1.$$ Then
$$\zeta^{-}_{2}\geq \frac{2}{3} \text{ if and only if } \limsup_{\stackrel{\nu \rightarrow 0}{(\nu,n)\in R}}\frac{1}{\log\nu^{-1}}\log\frac{\mathbb{E}^{\nu}[u_{n}^{2}]}{|\mathbb{E}^{\nu}[u_{n}^{2}u_{n+1}]|^{2/3}}\leq 0$$
and
$$\zeta^{-}_{2}= \frac{2}{3} \text{ if and only if } \limsup_{\stackrel{\nu \rightarrow 0}{(\nu,n)\in R}}\frac{1}{\log\nu^{-1}}\log\frac{\mathbb{E}^{\nu}[u_{n}^{2}]}{|\mathbb{E}^{\nu}[u_{n}^{2}u_{n+1}]|^{2/3}}= 0.$$
\end{theorem}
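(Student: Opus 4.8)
The plan is to reduce both equivalences to elementary bookkeeping with $\limsup$'s, once the relevant quantities are tied together by a single logarithmic identity. Throughout I take $k_{0}=1$ (as in the preceding lemmas), so $k_{n}=2^{n}$ and $\log_{2}k_{n}=n$. For $(\nu,n)\in R$ set
$$
a(\nu,n)=\frac{1}{n}\log_{2}\mathbb{E}^{\nu}[|u_{n}|^{2}],\qquad
f(\nu,n)=\frac{1}{n}\log_{2}|\mathbb{E}^{\nu}[u_{n}^{2}u_{n+1}]|,\qquad
q(\nu,n)=\frac{1}{\log\nu^{-1}}\log\frac{\mathbb{E}^{\nu}[u_{n}^{2}]}{|\mathbb{E}^{\nu}[u_{n}^{2}u_{n+1}]|^{2/3}}.
$$
Since the argument of $q$ is base-independent, $q=\frac{1}{\log_{2}\nu^{-1}}\log_{2}\big(\mathbb{E}^{\nu}[u_{n}^{2}]/|\mathbb{E}^{\nu}[u_{n}^{2}u_{n+1}]|^{2/3}\big)$. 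Writing $\mathbb{E}^{\nu}[u_{n}^{2}]$ as the ratio times $|\mathbb{E}^{\nu}[u_{n}^{2}u_{n+1}]|^{2/3}$, taking $\log_{2}$, and dividing by $n$ gives
$$
a=\rho\,q+\tfrac{2}{3}f,\qquad \rho:=\frac{\log_{2}\nu^{-1}}{n}.
$$
By definition $\limsup a=-\zeta_{2}^{-}$ and $\limsup f=-\zeta_{3}^{flux}$, and the conditions appearing in the theorem are exactly $\limsup q\le 0$ (resp. $=0$). Everything therefore reduces to controlling $\limsup a$ through this identity.

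The first ingredient is that $\rho$ is trapped between positive constants. From the hypothesis $\nu^{-\epsilon}\le k_{n_{-}(\nu)}\le k_{n_{+}(\nu)}\le\nu^{-\alpha}$ together with $k_{n}=2^{n}$ one reads off $\epsilon\log_{2}\nu^{-1}\le n_{-}(\nu)\le n_{+}(\nu)\le\alpha\log_{2}\nu^{-1}$, hence
$$
\frac{1}{\alpha}\le \rho(\nu,n)\le\frac{1}{\epsilon}\qquad\text{for all }(\nu,n)\in R,
$$
and in particular $n\ge n_{-}(\nu)\to\infty$ as $\nu\to 0$. The second ingredient is the flux hypothesis $\zeta_{3}^{flux}=1$, which I read as $f(\nu,n)\to-1$ as a genuine limit (the natural meaning of the exponent ``when it exists'', and exactly what the two-sided flux bound of Remark \ref{phi_n_two_side_inequality} yields when that extra structure is present). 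Then $\tfrac{2}{3}f\to-\tfrac{2}{3}$, and since $\limsup(x+y)=\limsup x+\lim y$ whenever $y$ converges, the identity gives
$$
\limsup a=\limsup(\rho\,q)-\tfrac{2}{3}.
$$

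It remains to transfer $\limsup(\rho\,q)$ to $\limsup q$ using only $1/\alpha\le\rho\le1/\epsilon$. The pointwise bounds $\rho q\le\frac{1}{\epsilon}q^{+}$ and $q\le\alpha(\rho q)^{+}$ (splitting the cases $q\ge0$ and $q<0$, with $\rho>0$) give $\limsup(\rho q)\le\frac{1}{\epsilon}\limsup q^{+}$ and $\limsup q\le\alpha\limsup(\rho q)^{+}$; together with the fact that $\limsup w\le0$ forces $\limsup w^{+}=0$, this yields $\limsup(\rho q)\le0\iff\limsup q\le0$. A strict version of the same case analysis (if $\limsup q=-\eta<0$ then $\rho q\le\frac{1}{\alpha}q$ forces $\limsup(\rho q)<0$, and symmetrically) upgrades this to $\limsup(\rho q)=0\iff\limsup q=0$. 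Combining with the displayed identity and $\limsup a=-\zeta_{2}^{-}$,
$$
\zeta_{2}^{-}\ge\tfrac{2}{3}\iff\limsup a\le-\tfrac{2}{3}\iff\limsup(\rho\,q)\le0\iff\limsup q\le0,
$$
and the identical chain with ``$=$'' in place of ``$\ge$''/``$\le$'' throughout gives $\zeta_{2}^{-}=\tfrac{2}{3}\iff\limsup q=0$; these are the two asserted equivalences.

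The only genuinely delicate point is the interaction of $\limsup$ with the \emph{varying} factor $\rho$: the whole reduction hinges on $\rho$ being bounded away from both $0$ and $\infty$, which is precisely what the two-sided control $\nu^{-\epsilon}\le k_{n_{\pm}(\nu)}\le\nu^{-\alpha}$ supplies (the lower bound keeps $\rho$ finite, the upper bound keeps it away from $0$). I would also flag that the converse and the equality statement truly require $f\to-1$ as a limit rather than merely $\limsup f=-1$; otherwise $f$ could drop to $-\infty$ along a subsequence and decouple $\limsup a$ from $\limsup(\rho\,q)$. I would therefore state the interpretation of $\zeta_{3}^{flux}=1$ as a true limit (or invoke the two-sided flux estimate) at the outset, after which the equivalences follow from the bookkeeping above.
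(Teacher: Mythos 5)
Your proof is correct, and it packages the argument differently from the paper. The paper proceeds by epsilon-chasing: it converts $\zeta_{2}^{-}\geq\frac{2}{3}$ into a power law $\mathbb{E}^{\nu}[|u_{n}|^{2}]\leq k_{n}^{-2/3+\epsilon'}$, converts $\zeta_{3}^{flux}=1$ into two-sided power bounds $k_{n}^{-1-\epsilon''}\leq|\mathbb{E}^{\nu}[u_{n}^{2}u_{n+1}]|\leq k_{n}^{-1+\epsilon''}$, and uses $\nu^{-\epsilon}\leq k_{n_{-}(\nu)}$, $k_{n_{+}(\nu)}\leq\nu^{-\alpha}$ to trade exponents in $k_{n}$ for exponents in $\nu^{-1}$; the equality case is dismissed as ``similar.'' Your identity $a=\rho q+\frac{2}{3}f$ with $\rho=\log_{2}\nu^{-1}/n\in[1/\alpha,1/\epsilon]$ is the structural content of that computation, and your limsup calculus (limsup of a sum with a convergent term; transfer of the conditions $\leq 0$ and $=0$ through a factor bounded between positive constants) makes the bookkeeping airtight. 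It buys three things the paper does not deliver cleanly: the equality statement is proved rather than asserted; the base-of-logarithm and $k_{0}=1$ normalizations are handled explicitly; and it avoids slips such as the paper's converse step $\nu^{-\epsilon/3}k_{n}^{-2/3+2\epsilon/3}=k_{n}^{-2/3+\epsilon}$, which conflates the hypothesis' $\epsilon$ with the auxiliary one (from $k_{n}\geq\nu^{-\epsilon}$ one only gets $\nu^{-\epsilon/3}\leq k_{n}^{1/3}$). You also silently repair the typo $k_{n_{+}}\leq k_{n_{+}}$ to $k_{n_{-}}\leq k_{n_{+}}$, exactly as the paper's own proof does when it invokes ``the assumption on $n_{-}(\nu)$.'' Finally, your insistence that $\zeta_{3}^{flux}=1$ must carry two-sided information is not an extra hypothesis relative to the paper: the paper's own forward direction uses the lower bound $|\mathbb{E}^{\nu}[u_{n}^{2}u_{n+1}]|\geq k_{n}^{-1-\epsilon}$, which does not follow from the $\limsup$ in its definition, so it tacitly makes the same reading; stating it up front (or citing Remark \ref{phi_n_two_side_inequality}) is an improvement.

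One slip in your closing remark: you have the directions reversed. It is the \emph{forward} implication $\zeta_{2}^{-}\geq\frac{2}{3}\Rightarrow\limsup q\leq 0$, together with both halves of the equality statement, that needs $\liminf f\geq-1$; the converse $\limsup q\leq 0\Rightarrow\zeta_{2}^{-}\geq\frac{2}{3}$ needs only $\limsup f=-1$, since $\limsup a\leq\limsup(\rho q)+\frac{2}{3}\limsup f\leq-\frac{2}{3}$. Indeed, if $f$ dips to very negative values along a sparse subsequence while $a\equiv-\frac{2}{3}$, then $q=(a-\frac{2}{3}f)/\rho$ spikes positive there and the forward implication fails, whereas the converse is untouched; consistently, the flux lower bound is used in the paper's forward direction, not its converse. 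This does not affect the validity of your proof, which assumes the limit interpretation throughout, but the flag should point at the forward direction.
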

\begin{proof}
Consider the first part. Assume that $\zeta^{-}_{2}\geq \frac{2}{3}.$ I.e., $$-\limsup_{\stackrel{\nu \rightarrow 0}{(\nu,n)\in R}}\frac{1}{n}\log_{2}\mathbb{E}^{\nu}[|u_{n}|^{2}]\geq \frac{2}{3}.$$ Thus, for any $\epsilon>0$ there exits $\nu_{1}>0$ such that for all $(\nu,n) \in  (0,\nu_{1}]\times[n_{-}(\nu),n_{+}(\nu)]$, $$\frac{1}{n}\log_{2}\mathbb{E}^{\nu}[|u_{n}|^{2}]\leq -\frac{2}{3}+\frac{\epsilon}{3\alpha}.$$ Assuming $k_{0}=1$ we get
\begin{equation}\label{leq1}
\mathbb{E}^{\nu}[|u_{n}|^{2}]\leq k_{n}^{-\frac{2}{3}+\frac{\epsilon}{3\alpha}}.
\end{equation}
Since $\zeta_{3}^{flux}=1$, I.e., $$-\limsup_{\stackrel{\nu \rightarrow 0}{(\nu,n)\in R}}\frac{1}{n}\log_{2}|\mathbb{E}^{\nu}[u_{n}^{2}u_{n+1}]|=1.$$ Thus, for any $\epsilon>0$ there exits $\nu_{2}>0$ such that for all $(\nu,n) \in  (0,\nu_{2}]\times[n_{-}(\nu),n_{+}(\nu)]$, $$\frac{1}{n}\log_{2}|\mathbb{E}^{\nu}[u_{n}^{2}u_{n+1}]|\geq -1+\frac{\epsilon}{\alpha}.$$ Assuming Assuming $k_{0}=1$ we have $|\mathbb{E}^{\nu}[u_{n}^{2}u_{n+1}]|\geq k_{n}^{-1-\epsilon}.$ Thus we get
\begin{equation}\label{geq1}
|\mathbb{E}^{\nu}[u_{n}^{2}u_{n+1}]|^{2/3}\geq k_{n}^{-\frac{2}{3}+\frac{-2\epsilon}{3\alpha}}.
\end{equation}
From \eqref{leq1} and \eqref{geq1} we get, $$\frac{\mathbb{E}^{\nu}[|u_{n}|^{2}]}{|\mathbb{E}^{\nu}[u_{n}^{2}u_{n+1}]|^{2/3}} \leq k_{n}^{\epsilon/\alpha} \leq k_{n_{+}(\nu)}^{\epsilon/\alpha}.$$ Since $k_{n_{+}(\nu)}\leq \nu^{-\alpha}$ we have $k_{n_{+}(\nu)}^{\epsilon/\alpha}\leq \nu^{\epsilon}.$ Thus, $$\frac{\mathbb{E}^{\nu}[|u_{n}|^{2}]}{|\mathbb{E}^{\nu}[u_{n}^{2}u_{n+1}]|^{2/3}} \leq \nu^{\epsilon}.$$ Considering logarithms and then dividing by $\log_{2}\nu^{-}$ we get $$\frac{1}{\log\nu^{-1}}\log\frac{\mathbb{E}^{\nu}[u_{n}^{2}]}{|\mathbb{E}^{\nu}[u_{n}^{2}u_{n+1}]|^{2/3}} \leq \epsilon$$ for all $(\nu,n) \in  (0,\nu_{0}]\times[n_{-}(\nu),n_{+}(\nu)]$ where $nu_{0}=\min\{\nu_{1},\nu_{2}\}.$ Taking limit we get $$\limsup_{\stackrel{\nu \rightarrow 0}{(\nu,n)\in R}}\frac{1}{\log\nu^{-1}}\log\frac{\mathbb{E}^{\nu}[u_{n}^{2}]}{|\mathbb{E}^{\nu}[u_{n}^{2}u_{n+1}]|^{2/3}}\leq 0.$$
\bigskip
In order to prove the converse direction assume
$$\zeta^{-}_{2}\geq \frac{2}{3} \text{ if and only if } \limsup_{\stackrel{\nu \rightarrow 0}{(\nu,n)\in R}}\frac{1}{\log\nu^{-1}}\log\frac{\mathbb{E}^{\nu}[u_{n}^{2}]}{|\mathbb{E}^{\nu}[u_{n}^{2}u_{n+1}]|^{2/3}}\leq 0.$$
Thus, for all $(\nu,n) \in  (0,\nu_{1}]\times[n_{-}(\nu),n_{+}(\nu)]$, $$\frac{1}{\log\nu^{-1}}\log\frac{\mathbb{E}^{\nu}[u_{n}^{2}]}{|\mathbb{E}^{\nu}[u_{n}^{2}u_{n+1}]|^{2/3}}\leq \frac{\epsilon}{3}.$$ Thus, we have
\begin{equation}\label{leq2}
\mathbb{E}^{\nu}[u_{n}^{2}]\leq \nu^{-\frac{\epsilon}{3}} |\mathbb{E}^{\nu}[u_{n}^{2}u_{n+1}]|^{2/3}.
\end{equation}
Since $\zeta_{3}^{flux}=1$, following the same argument as above we can get
\begin{equation}\label{leq3}
|\mathbb{E}^{\nu}[u_{n}^{2}u_{n+1}]|^{2/3}\leq k_{n}^{-\frac{2}{3}+\frac{2\epsilon}{3}}.
\end{equation}
From \eqref{leq2}, \eqref{leq3} and using the assumption on $n_{-}(\nu)$ we get $$\mathbb{E}^{\nu}[u_{n}^{2}]\leq \nu^{-\frac{\epsilon}{3}} k_{n}^{-\frac{2}{3}+\frac{2\epsilon}{3}} = k_{n}^{-\frac{2}{3}+\epsilon} .$$ Thus, $$\frac{1}{n}\log_{2}\mathbb{E}^{\nu}[u_{n}^{2}]\leq -\frac{2}{3}+\epsilon.$$ Taking limit we obtain $$\zeta^{-}_{2}\geq \frac{2}{3}.$$
\bigskip
Proof of the second part is similar.
\end{proof}

\bibliographystyle{plain}
\addcontentsline{toc}{section}{\bibname} 

\bibliography{references}

\end{document}